\date{\today}          
\DeclareFontFamily{OT1}{rsfs}{}
\DeclareFontShape{OT1}{rsfs}{n}{it}{<->rsfs10}{}
\DeclareMathAlphabet{\curly}{OT1}{rsfs}{n}{it}
\theoremstyle{plain}  
\newtheorem{theorem}{Theorem}[section]
\newtheorem*{theorem*}{Theorem}
\newtheorem{corollary}[theorem]{Corollary}
\newtheorem{lemma}[theorem]{Lemma}
\newtheorem{proposition}[theorem]{Proposition}
\theoremstyle{definition}
\newtheorem{definition}[theorem]{Definition}
\theoremstyle{remark}
\newtheorem{example}[theorem]{Example}
\newtheorem*{notation*}{Notation}
\newtheorem{remark}[theorem]{Remark}
\newtheorem*{claim*}{Claim}
\numberwithin{equation}{section}
\renewcommand{\leq}{\leqslant}
\renewcommand{\le}{\leqslant}
\renewcommand{\geq}{\geqslant}
\renewcommand{\ge}{\geqslant}
\renewcommand{\setminus}{\smallsetminus}
\newcommand{\R}{\mathbb{R}}
\newcommand{\Z}{\mathbb{Z}}
\newcommand{\C}{\mathbb{C}}
\newcommand{\cM}{\mathcal{M}}
\newcommand{\xra}{\xrightarrow}
\newcommand{\PSL}{\mathrm{PSL}}
\newcommand{\U}{\mathrm{U}}
\newcommand{\OO}{\mathrm{O}}
\newcommand{\GL}{\mathrm{GL}}
\newcommand{\SL}{\mathrm{SL}}
\newcommand{\SO}{\mathrm{SO}}
\newcommand{\Sp}{\mathrm{Sp}}
\DeclareMathOperator{\Jac}{Jac}
\DeclareMathOperator{\ad}{ad}
\DeclareMathOperator{\grad}{grad}
\DeclareMathOperator{\rank}{rank}
\DeclareMathOperator{\Hom}{Hom}
\DeclareMathOperator{\End}{End}
\DeclareMathOperator{\Sym}{Sym}
\newcommand{\noi}{\noindent}
\newcommand{\aut}{\operatorname{aut}}
\newcommand{\Aut}{\operatorname{Aut}}
\newcommand{\liem}{\mathfrak{m}}
\newcommand{\lieh}{\mathfrak{h}}
\newcommand{\liehc}{\mathfrak{h}^{\mathbb{C}}}
\newcommand{\lieg}{\mathfrak{g}}
\newcommand{\liegc}{\mathfrak{g}^{\mathbb{C}}}
\newcommand{\CC}{\mathbb{C}}
\newcommand{\lie}{\mathfrak}
\newcommand{\hlie}{\mathfrak{h}}
\newcommand{\mlie}{\mathfrak{m}}
\newcommand{\mclie}{\mlie^{\CC}}
\newcommand{\HC}{H^{\CC}}
\DeclareMathOperator{\YM}{YM}
\DeclareMathOperator{\YMH}{YMH}
\DeclareMathOperator{\dvol}{dvol}
\DeclareMathOperator{\Lie}{Lie}
\DeclareMathOperator{\Ric}{Ric}
\DeclareMathOperator{\Rep}{Rep}
\begin{document}

\title{Morse theory, Higgs fields, and Yang-Mills-Higgs functionals}

\author[S. B. Bradlow]{Steven B. Bradlow}
\address{Department of Mathematics \\
University of Illinois \\
Urbana \\
IL 61801 \\
USA }
\email{bradlow@math.uiuc.edu}

\author[G. Wilkin]{Graeme Wilkin}
\address{Department of Mathematics\\
National University of Singapore \\
Block S17 \\
10, Lower Kent Ridge Rd \\
Singapore 119076}
\email{graeme@nus.edu.sg}

\dedicatory{Dedicated to Professor Dick Palais on the occasion
     of his $80$th birthday}

\thanks{ }


\maketitle

\section{Introduction}

In classical Morse theory, manifolds are smooth and finite dimensional, and critical points are isolated and non-degenerate.  Nearly 50 years ago Palais expanded the theory to cover infinite dimensional manifolds. Subsequent developments showed how to cope if  critical points are replaced by non-degenerate critical submanifolds (Bott, \cite{Bott54}) and if the non-degeneracy condition is relaxed to allow (possibly singular) critical sets that are contained in a minimising manifold (Kirwan, \cite{Kirwan84}). The full range of such phenomena appear in the settings described in this paper, with key ideas descending directly from Palais's work.  Given that the first author is a mathematical grandchild and the second author is from the next generation in the Palais family tree, this birthday offering is thus both by,  and about, Palais's mathematical descendants. 

The setting for this paper is the theory of Higgs bundles over a closed Riemann surface, say $X$.  Higgs bundles were first introduced by Nigel Hitchin in his landmark 1987 paper \cite{Hitchin87}, though the name was first introduced by the other pioneer in the subject, namely Carlos Simpson, in his 1988 paper \cite{Simpson88}.  In both cases, the objects introduced correspond to Higgs bundles associated to complex groups (actually $\SL(n,\C)$); in this paper we will adopt the more general notion (introduced by Hitchin in \cite{Hitchin92}) of Higgs bundles for real reductive Lie groups. We call these objects $G$-Higgs bundles, where $G$ is the real group. This will allow us to explore a wider range of phenomena.  In the spirit of this review, we focus on phenomena detected using Morse-theoretic ideas. 

The main objects of study are the finite dimensional moduli spaces of $G$-Higgs bundles.  A fundamental feature of these spaces is that they admit several complementary interpretations. They have an algebraic interpretation as a moduli space of polystable objects in the sense of Geometric Invariant Theory, but can also be viewed as moduli spaces of solutions to gauge-theoretic equations.  In some cases they can be identified with a representation variety for representations of the fundamental group $\pi_1(X)$. Related to these different points of view, there are two natural ways to apply Morse-theoretic  ideas to the study of the topology of these moduli spaces. 

The first way is to view the moduli space as a quotient space of the infinite dimensional space of connections and Higgs fields on a principal bundle. The Morse function in this setting is a natural energy functional, the so-called Yang-Mills-Higgs functional, which, crucially, satisfies an equivariant version of the famous Palais-Smale Condition C along complex gauge orbits. The absolute minimum of this functional is a gauge invariant analytic subspace and the orbit space is the moduli space of interest.  This finite dimensional moduli space has its own natural choice of Morse function known as the Hitchin function. The second type of Morse theory is based on this Hitchin function.

After introducing $G$-Higgs bundles in Section \ref{GHiggs},  in Section \ref{sect:YMH} we describe the infinite dimensional Morse theory for the Yang-Mills-Higgs functional and in Section \ref{sect:hitchinfn}  we discuss the finite dimensional Morse Theory for the Hitchin function. This paper is mostly a review of a still-evolving body of work and and, apart from Theorem \ref{newtheorem} in Section \ref{sect:gradflow}, contains essentially no new results.

\section{$G$-Higgs bundles}\label{GHiggs}

\subsection{Definitions}\label{subs:defns}
Let $G$ be a \textbf{real reductive Lie group}. While the precise definition requires some care (see for example Knapp \cite[p.~384]{knapp:1996}), it is sufficient for our purposes to take $G$ to be the real form of a complex reductive group. In fact, in this paper the main complex groups that will appear are $\GL(n,\C)$ and $\SL(n,\C)$.  The definition of a $G$-Higgs bundle requires the following additional data:
\begin{itemize}
\item a maximal compact subgroup $H\subset G$
\item a Cartan involution $\theta$ giving a decomposition of the Lie algebra 
\begin{displaymath}
\lie{g} = \lieh \oplus \liem
\end{displaymath}
into its $\pm1$-eigenspaces, where  $\lieh$ is the Lie
algebra of $H$, and
\item a non-degenerate Ad-invariant and $\theta$-invariant bilinear form, $B$, on $\lieg$.
\end{itemize}

\noi Given the above choices, the Lie algebra satisfies
$$[\hlie,\hlie]\subset\hlie,\qquad
[\hlie,\mlie]\subset\mlie,\qquad [\mlie,\mlie]\subset\hlie.$$ 

\noi and hence the adjoint representation restricts to define a linear representation of $H$ on $\liem$.  This action extends to a linear holomorphic action 

\begin{equation} \label{eq:isotropy-rep}
\iota\colon \HC \to \GL(\mclie).    
\end{equation}

\noi called the \textbf{isotropy representation}.

\begin{definition}\label{def:g-higgs}
A \textbf{$G$-Higgs bundle} on $X$ is a pair
$(E_{\HC},\varphi)$, where $E_{\HC}$ is a holomorphic $\HC$-principal bundle
over $X$ and $\varphi$ is a holomorphic section of $E_{\HC}(\mclie)\otimes
K$,  where $E_{\HC}(\mclie)= E_{\HC} \times_{\HC}\mclie$ is the
$\mclie$-bundle associated to $E_{\HC}$ via the isotropy representation and $K$ is the canonical bundle of $X$.
\end{definition}

\begin{example}
If $G$ is compact them $H=G$ and $\liem=\{0\}$. A $G$-Higgs bundle is thus equivalent to a holomorphic $G^{\C}$-bundle.
\end{example}

\begin{example}\label{ex:Gc} If $G$ is the underlying real group of a complex reductive group then $\HC=G$ and $\mclie=i\lieg=\lieg$.  In this case a $G$-Higgs bundle is a pair $(E,\varphi)$, where $E$ is a holomorphic $G$-principal bundle over $X$ and $\varphi$ is a holomorphic section of $ad(E)\otimes K$. In particular, if $G=\GL(n,\C)$ and we replace the principal $G$-bundle by the rank $n$ vector bundle determined by the standard representation, then a {\bf $\GL(n,\C)$-Higgs bundle} is a pair $(V,\varphi)$, where $V$ is a rank $n$ holomorphic vector bundle over $X$ and $\varphi$ is a holomorphic map $\varphi:V\rightarrow V\otimes K$. If we take $G=\SL(n,\C)$ then an {\bf $\SL(n,\C)$-Higgs bundle} is also a pair $(V,\varphi)$ but $\det(V)$ is trivial and $\varphi$ has zero trace.
\end{example}

\begin{example} In Table 1  we describe the structure of $\SL(n, \R)$, $\Sp(2n, \R)$ and $\U(p, q)$ as real reductive groups. In each case there is a Cartan involution given by $\theta(u) = - u^*$, which induces the Cartan decomposition given in the table.  Table 2 describes the associated $G$-Higgs bundles and their vector bundle representation for these examples of real reductive groups.  Thus for example the data defining a $\U(p,q)$-Higgs bundle on a Riemann surface $X$ is equivalent to the tuple $(V_1,V_2,\beta,\gamma)$ where $V_1$ and $V_2$  are respectively rank $p$ and rank $q$ holomorphic vector bundles on $X$ and the maps

\begin{align*}\beta: &V_1\longrightarrow V_2\otimes K\\ 
\gamma: & V_2\longrightarrow V_1\otimes K
\end{align*}

\noi define the Higgs field (here $K$ is the canonical bundle on $X$). \footnotemark\footnotetext{For more details see \cite{bradlow-garciaprada-gothen2003}, \cite {garcia-gothen-mundet:2008}, \cite{Hitchin92}.} The case of $\Sp(2n,\R)$ is explained further in Example \ref{Ex;Sp2nR}.

\begin{table}\label{Cartan}
\begin{tabular}{|c|c|c|c|}
\hline
$G$ & $H$ & $H^{\C}$ & $\liem^{\C}$\\
\hline
$\SL(n,\R)$ & $\SO(n)$ & $\SO(n,\C)$ & $\Hom^{sym}_0(\C^n,\C^n)$\\
\hline
$\Sp(2n,\R)$ & $\U(n)$ & $\GL(n,\C)$ & $ \Sym^2(\C^n)\oplus\Sym^2((\C^n)^*)$\\
\hline
$\U(p,q)$ & $\U(p)\times\U(q)$ &{\tiny $\GL(p,\C)\times\GL(q,\C)$} &  $ \Hom(\C^p,\C^q)\oplus 
\Hom(\C^q,\C^p)$\\
\hline
\end{tabular}
\caption{\small{Examples of Cartan decompositions of real reductive groups}}
\end{table}


\begin{table}
\begin{tabular}{|c|c|c|}
\hline
$G$ & Bundle(s) & Higgs field\\
\hline
$\SL(n,\R)$ &$V$ (rank n) &$\varphi:V\longrightarrow V\otimes K$\\
&orthogonal & symmetric, $Tr(\varphi)=0$\\
\hline
$\Sp(2n,\R)$ & $V$ (rank n)& $\varphi=(\beta, \gamma )$,  
$\begin{cases}\beta: V^*\longrightarrow V\otimes K\\ \gamma: V\longrightarrow V^*\otimes K\end{cases}$ \\
& (degree d)  &$\beta^t=\beta; \gamma^t=\gamma$ \\
\hline
$\U(p,q)$ &$V_1$ (rank $p$)  &$\varphi=(\beta, \gamma )$,  
$\begin{cases}\beta: V_1\longrightarrow V_2 \otimes K\\ \gamma: V_2\longrightarrow V_1\otimes K \end{cases}$ \\  & $V_2$ (rank $q$)  & \\
\hline
\end{tabular}
\caption{\small{The G-Higgs bundles with $E_{H^{\C}}$ replaced by vector bundle(s)}}
\end{table}

\end{example}

\subsection{Moduli spaces}\label{subs:modspaces}

Two $G$-Higgs pairs $(E_{\HC},\varphi)$ and
$(E_{\HC}',\varphi')$ are \textbf{isomorphic} if there is an isomorphism
$f\colon E_{\HC} \xra{\simeq} E_{\HC}'$ such that $\varphi = f^*\varphi'$ where $f^*$ is the obvious induced map.  Loosely speaking, a moduli space for $G$-Higgs bundles is a geometric space whose points parameterize isomorphism classes of pairs $(E,\varphi)$.  In general the space of all isomorphism classes has bad topological properties; for example, it may not even be Hausdorff.  One standard solution to this problem is provided by Geometric Invariant Theory (GIT) notions of stability and polystability.  The point of these stability properties is to identify the isomorphism classes for which a suitable moduli space can be constructed.   Constructions of this sort for moduli spaces of holomorphic bundles have a long history going back to the work of Narasimhan and Seshadri (\cite{NarasimhanSeshadri65}).  The appropriate notions of stability and polystability have been formulated in very general terms  for $G$-Higgs bundles in \cite{garcia-prada-gothen-mundet:2009a}. While the general definitions are quite cumbersome, in many special cases they simplify to conditions akin to slope stability/semistability/polystability for holomorphic bundles. 

\begin{example}\label{Ex:GLnC} Let $G=\GL(n,\C)$ and let  $(V,\Phi)$  be a $G$-Higgs bundle over a Riemann surface $X$ (as in Example \ref{ex:Gc}).  The Higgs bundle $(V,\Phi)$ is defined to be semistable if 

\begin{equation}\label{GLnstable}
\frac{\deg(V')}{\rank(V')}\le\frac{\deg(V)}{\rank(V)}
\end{equation}

\noi for all $\Phi$-invariant subbundles $V'\subset V$.  The Higgs bundle is stable if the inequality in \eqref{GLnstable} is strict for all proper invariant subbundles, and it is polystable if it decomposes as a direct sum of stable Higgs bundles all with the same slope (where the slope of a bundle $V$ is the ratio $\frac{\deg(V)}{\rank(V)}$).
\end{example}

\begin{definition}\label{def:ModG} The \textbf{moduli space of polystable
$G$-Higgs bundles} $\mathcal{M}(G)$ is the set of isomorphism
classes of polystable $G$-Higgs bundles $(E_{\HC},\varphi)$.
\end{definition}

The topological type of the bundle $E_{\HC}$ is an invariant of connected components of $\mathcal{M}(G)$ so we refine our definition to identify components by the possible topological types. The topological types can be identified by suitable characteristic classes such as Chern classes (if $\HC=\GL(n,\C)$) or Stiefel-Whitney classes (for $\OO(n,\C)$-bundles). If $G$ is semisimple and connected the topological type of a principal $H^\CC$-bundle on $X$ is classified by an element in  $ \pi_1(H^\CC)=\pi_1(H)=\pi_1(G)$.  We will loosely denote the characteristic class by $c(E_{\HC})$ and denote its value by $c(E_{\HC})=d$.  We can thus define; 

\begin{definition}\label{def:MdG} The component $\mathcal{M}_d(G)\subset \mathcal{M}(G)$, called the  \textbf{moduli space of polystable $G$-Higgs bundles of type $d$}  is the set of isomorphism classes of polystable $G$-Higgs bundles $(E_{\HC},\varphi)$ such that $c(E_{\HC})=d$.  
\end{definition}

\begin{remark} When $G$ is compact, the moduli space $\cM_d(G)$ coincides
with $M_d(G^\CC)$, the moduli space of polystable principal $G^\CC$-bundles
with topological invariant $d$.
\end{remark}

The moduli space $\cM_d(G)$ has the structure of a complex analytic
variety.  This can be seen by the standard slice method (see, e.g.,
Kobayashi \cite[Ch. VII]{kobayashi:1987}).  Geometric Invariant Theory
constructions are available in the literature for $G$ real compact
algebraic (Ramanathan \cite{ramanathan:1975, ramanathan:1996}) and for $G$ complex
reductive algebraic (Simpson \cite{simpson:1994,simpson:1995}).  The
case of a real form of a complex reductive algebraic Lie group follows
from the general constructions of Schmitt
\cite{schmitt:2008}. We thus have the following.

\begin{theorem}\label{alg-moduli}
The moduli space $\cM_d(G)$ is a complex analytic variety, which is
algebraic when $G$ is algebraic.
\end{theorem}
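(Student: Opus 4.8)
The plan is to establish the two assertions separately: first, the existence of a complex analytic structure for an arbitrary real reductive $G$, and then algebraicity under the additional hypothesis that $G$ is algebraic. For the analytic structure I would proceed by the standard slice (Kuranishi) method, as indicated by Kobayashi \cite[Ch.~VII]{kobayashi:1987}. Fix a polystable $G$-Higgs bundle $(E_{\HC},\varphi)$ and consider its deformation complex
\begin{equation*}
C^\bullet(E_{\HC},\varphi):\quad E_{\HC}(\liehc) \xrightarrow{\ad(\varphi)} E_{\HC}(\mclie)\otimes K,
\end{equation*}
whose hypercohomology groups $\HH^0,\HH^1,\HH^2$ record respectively the infinitesimal automorphisms, the infinitesimal deformations, and the obstructions. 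The first step is to produce, via elliptic theory on $X$ together with a gauge-fixing (harmonic) condition, a finite-dimensional Kuranishi model: a holomorphic obstruction map $\Psi\colon U\subset\HH^1\to\HH^2$ whose zero locus $\Psi^{-1}(0)$ carries an action of the reductive isotropy group $\Aut(E_{\HC},\varphi)$, together with a local homeomorphism of $\Psi^{-1}(0)/\Aut(E_{\HC},\varphi)$ onto a neighborhood of $[(E_{\HC},\varphi)]$ in $\cM_d(G)$. Since $\Psi$ is holomorphic and the isotropy group is a complex reductive group acting linearly on $\HH^1$, each such quotient is a complex analytic variety; the second step is to check that these local models glue compatibly, which follows from the naturality of the construction.

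For algebraicity when $G$ is algebraic, I would invoke the available GIT constructions, organized by increasing generality. When $G$ is complex reductive, a $G$-Higgs bundle is a principal $G$-bundle with a section of $ad(E)\otimes K$ (Example \ref{ex:Gc}), and Simpson's work \cite{simpson:1994, simpson:1995} realizes $\cM_d(G)$ as a GIT quotient of a locally closed subscheme of a suitable $\Quot$-scheme, yielding a quasi-projective variety; when $G$ is compact this specializes to Ramanathan's moduli of principal bundles \cite{ramanathan:1975, ramanathan:1996}. For a general real form the Higgs field is a section of the bundle $E_{\HC}(\mclie)\otimes K$ associated to the isotropy representation $\iota$, so $(E_{\HC},\varphi)$ is precisely a decorated (augmented) principal bundle in Schmitt's sense. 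The final step is to apply Schmitt's general GIT machinery \cite{schmitt:2008}, which produces a coarse moduli scheme for such decorated bundles whose closed points are the polystable isomorphism classes, and then to verify that the analytic variety underlying this scheme coincides with the one furnished by the slice method, so that the two structures are compatible.

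The step I expect to be the main obstacle is matching the intrinsic slope-type (poly)stability condition defining $\cM_d(G)$ with the GIT (Hilbert--Mumford) stability attached to a chosen linearization in Schmitt's construction, uniformly across all real forms $G$. This reconciliation is delicate because the isotropy representation $\iota$ changes with $G$, the destabilizing one-parameter subgroups must be translated into filtrations compatible with $\varphi$, and the polarization must be chosen so as to detect exactly the intrinsically unstable objects. The general stability notions supplied in \cite{garcia-prada-gothen-mundet:2009a} are designed precisely for this comparison, and reconciling them with Schmitt's parameter-dependent stability is where the genuine technical work lies.
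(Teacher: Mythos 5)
Your proposal follows essentially the same route as the paper, which likewise establishes the analytic structure by the slice method (citing Kobayashi \cite[Ch.~VII]{kobayashi:1987}) and algebraicity by appealing to the GIT constructions of Ramanathan \cite{ramanathan:1975, ramanathan:1996}, Simpson \cite{simpson:1994, simpson:1995}, and Schmitt \cite{schmitt:2008} for real forms. Your account is a more detailed expansion of the paper's citation-level argument (the paper does not spell out the Kuranishi model or the stability comparison), but the strategy and the key references are identical.
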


The moduli spaces are not in general smooth but the singularities can be identified.  The singular points are associated with the $G$-Higgs bundles which are polystable but not stable.  This allows the moduli spaces to be smooth in special cases where polystability implies stability.  For example in the case of $G=\GL(n,\C)$ (as in Example \ref{Ex:GLnC})   if the rank and degree of the underlying vector bundle are coprime then polystability is equivalent to stability and hence the moduli spaces are smooth.

The moduli spaces can also be viewed as quotients in an infinite dimensional gauge-theoretic setting. Fixing a smooth principal $\HC$-bundle $E$ with topological invariant $c(E)=d$, the holomorphic principal $\HC$-bundles with the same topological invariant can be described by a choice of holomorphic structure on $E$ .  If $E$ is a vector bundle then holomorphic structures are defined by $\C$-linear operators $\bar{\partial}_E : \Omega^0(E) \rightarrow \Omega^{0,1}(E)$ that satisfy the Leibniz rule \footnotemark\footnotetext{ In general there is also an integrability condition but on Riemann surfaces this is automatically satisfied (see \cite{NewlanderNirenberg57} for the original theorem on complex manifolds, and \cite{AtiyahHitchinSinger78} for the bundle case).}. This construction can be adapted for principal bundles, and in all cases the space of holomorphic structures, denoted by $\mathcal{A}^{0,1}(E_{\HC})$, is an infinite dimensional affine space locally modeled on the space of anti-holomoprhic 1-forms with values in the endomorphism bundle, $\Omega^{0,1}(E_{\HC}(\hlie^{\C}))$.  Moreover a given holomorphic structure induces an operator 
$\bar{\partial}_E$ on $\Omega^{1,0}(E_{\HC}(\mclie))$ such that the condition for a 1-form with values in $E_{\HC}(\mclie)$ to be holomorphic is $\bar{\partial}_E\varphi=0$.  

\begin{definition} The {\bf configuration space of $G$-Higgs bundles on $E$}, i.e.  the space of all $G$-Higgs bundles for which the principal $\HC$-bundle has topological invariant $d$, is the space

\begin{equation}
\mathcal{B}_d(G)=\{(\bar{\partial}_E,\varphi)\in \mathcal{A}^{0,1}(E_{\HC})\times\Omega^{1,0}(E_{\HC}(\mclie))\ |\  \bar{\partial}_E\varphi=0\ \}
\end{equation}
\end{definition}

The complex gauge group for $E_{\HC}$, i.e. the group of sections $\mathcal{G}^{\C}=\Omega^0(Ad(E_{\HC)}))$, acts on $\mathcal{B}_d(G)$ in such a way that $\mathcal{G}^{\C}$-orbits correspond to isomorphism classes of $G$-Higgs bundles.  Thus

\begin{equation}\label{complex}
\mathcal{M}_d(G)=\mathcal{B}_d^{ps}(G)/\mathcal{G}^{\C}
\end{equation}

\noi where $\mathcal{B}_d^{ps}(G)\subset \mathcal{B}_d(G)$ denotes the set of polystable $G$-Higgs bundles. 

The Hitchin-Kobayashi correspondence for Higgs bundles identifies polystability with the existence of solutions to gauge theoretic equations, and thus yelds yet another interpretation of the moduli spaces. There are two ways to formulate the equations.  In the first, one needs to fix a reduction of structure group on $E_{\HC}$ to the compact group $H$.  If $\HC=\GL(n,\C)$ so $H=U(n)$ then this corresponds to fixing a hermitian bundle metric on the associated rank $n$ vector bundle. We thus refer to this in all cases as a choice of metric. The metric allows us to write $E_{\HC}=E_H\times_H\HC$, where $E_H$ is a principal $H$-bundle, and defines the real gauge group $\mathcal{G}\subset\mathcal{G}^{\C}$ consisting of the sections of $\Omega^0(Ad(E_{H)}))$.   It provides, through the construction of Chern connections, an identification between holomorphic structures on $E_{\HC}$ and connections on $E_H$.  Denoting the space of connections on $E_H$ by $\mathcal{A}(E_H)$, we can thus formulate the following set of equations for pairs $(A,\varphi)\in  \mathcal{A}(E_{H})\times\Omega^{1,0}(E_{\HC}(\mclie))$:

\begin{align}
&F_A-[\varphi,\tau(\varphi)]=c\omega\label{Higgsequations1}\\
&d_A^{0,1}\varphi=0\label{Higgsequations2}
\end{align}

\noi Here $F_A$ is the curvature of the connection, $\tau$ denotes the compact conjugation on $\liegc$, i.e. the anti-linear involution which defines the compact real form, $c$ is an element in the center of $\lieh$, $\omega$ is the Kahler form on $X$, and  $d_A$ is the antiholomorphic part of the covariant derivative induced by $A$ on $E_{\HC}(\mclie))$. The constant $c$ is determined, via Chern-Weil theory,  by the topological invariant $d$. The operator $d_A^{0,1}$ defines the holomorphic structure on $E_{\HC}$, so the second equation says that the Higgs field $\varphi$ is holomorphic. 

Going back to the identification 

\begin{equation}
 \mathcal{A}(E_{H})\simeq\mathcal{A}^{0,1}(E_{\HC})
\end{equation}

\noi determined by a choice of metric,  we can regard the holomorphic structure and (holomorphic) Higgs field as given and view the equation \eqref{Higgsequations1} as an equation for a metric on $E_{\HC}$.  The Hitchin-Kobayashi correspondence then asserts:

\noi {\bf Hitchin-Kobayashi Correspondence\footnotemark\footnotetext{The original correspondence of this sort goes back to a theorem of Narasimhan and Seshadri (\cite{NarasimhanSeshadri65})for vector bundles over closed Riemann surfaces, while the versions for bundles over closed Kahler manifolds go back to \cite{Lubke} ,\cite{Donaldson87}, \cite{UhlenbeckYau86}. The scope of the result has by now been greatly extended to cover bundles with various kinds of additional data and also more general base manifolds (see \cite{LT} for a summary).  For the versions most appropriate for $G$-Higgs bundles see \cite{bradlow-garcia-prada-mundet:2003}, and\cite{LT}.}
{\it The $G$-Higgs bundle $(E_{\HC},\varphi)$ is polystable if and only if $E_{\HC}$ admits a metric satisfying the equation \eqref{Higgsequations1}}}

\noi On the other hand, the equation \eqref{Higgsequations1} can be interpreted as a symplectic moment map condition for the action of the gauge group $\mathcal{G}$ on the infinite dimensional space $\mathcal{A}(E_{H})\times\Omega^{1,0}(E_{\HC}(\mclie))$.  The  symplectic form on this space is defined by the $L^2$-inner products  constructed using the metric on $E_H$ together with the metric on $X$.  The resulting metric on $\mathcal{B}(d)$ is K\"ahler and hence defines a symplectic structure on the smooth locus of the moduli space. The action of the real gauge group is hamiltonian with respect to this symplectic structure and has a moment map 

\begin{align}
\Psi: \mathcal{A}(E_{H})&\times\Omega^{1,0}(E_{\HC}(\mclie))\rightarrow \Lie(\mathcal{G})^* \cong \Omega^2(\ad(E_H)) \\
(A,\varphi)&\mapsto \Lambda(F_A-[\varphi,\tau(\varphi)])- c
\end{align}

\noi where $\Lambda$ denotes contraction against the K\"ahler form $\omega$. If we define a subspace of $\mathcal{A}(E_{H})\times\Omega^{1,0}(E_{\HC}(\mclie))$ by

\begin{equation}
\mathcal{B}_d^H(G)=\{(A,\varphi)\in \mathcal{A}(E_{H})\times\Omega^{1,0}(E_{\HC}(\mclie))\ |\  d_A^{0,1}\varphi=0\ \}
\end{equation}

\noi and restrict the moment map to  $\mathcal{B}_d^H(G)$ we can thus describe the moduli space as a symplectic quotient, i.e

\begin{equation}\label{sympquot}
\mathcal{M}_d(G)=\Psi^{-1}(0)/\mathcal{G} 
\end{equation}

\noi It is useful to make use of all three descriptions of the moduli spaces $\mathcal{M}_d(G)$, namely 

\begin{itemize}
\item as a space of isomorphism classes of polystable objects (Definition \ref{def:MdG}),
\item as a complex quotient (see \eqref{complex}), and
\item as a symplectic quotient (see \eqref{sympquot})
\end{itemize}

There is one further interpretation of the moduli spaces which relies on the fact that while the Higgs field $\varphi$ is a 1-form which takes its values in $\mclie$, the combination $\theta=\varphi-\tau(\varphi)$ takes its values in $\liem$.  It follows that 

\begin{equation}
D=d_A+\theta
\end{equation}

\noi takes its values in $\lieg=\lieh\oplus\liem$ and defines a connection on the principal $G$-bundle 

$$E_G=E_H\times_HG\ ,$$

\noi  i.e. on the bundle obtained from $E_H$ by extending the structure group.  The bundle $E_G$ is thus, by construction,  a $G$ bundle with a reduction of structure group to $H$, i.e. with a metric.  We can reformulate equations \eqref{Higgsequations1} and \eqref{Higgsequations2} in terms of $D$ and the metric on $E_G$.  If we assume that $G$ is semisimple and hence that the right hand side of \eqref{Higgsequations1} is zero, then the equations become

\begin{align}\label{eqn:realHiggs}
F_D & =0\nonumber \\
d_A^*\theta & =0 \ .
\end{align}

\noi The first equation says that the connection $D$ is flat and hence that its holonomy defines a representation

\begin{equation}\label{eqn:rho}
\rho:\pi_1(X)\rightarrow G \ .
\end{equation}

\noi Using the flat structure defined by $D$, the metric on $E_G$ is equivalent to a $\pi_1(X)$-equivariant map from the universal cover of $X$:

\begin{equation}
\sigma: \tilde{X}\rightarrow G/H \ .
\end{equation}

\noi The second equation says that this map is {\it harmonic} with respect to the metric induced on $\tilde{X}$ from that on $X$ and the invariant metric on $G/H$.  An important theorem of Corlette asserts

\begin{theorem}\cite{corlette:1988} The representation defined by \eqref{eqn:rho} is reductive and all reductive representations of $\pi_1(X)$ in $G$ arise in this way, i.e. the corresponding flat $G$-bundles admit harmonic metrics.
\end{theorem}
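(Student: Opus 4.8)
The plan is to treat both implications through the Riemannian geometry of the symmetric space $N = G/H$, which for reductive $G$ carries a $G$-invariant metric of nonpositive sectional curvature (it is a Cartan--Hadamard manifold when $G$ is semisimple). Using the flat structure $D$, a metric on $E_G$ reducing the structure group to $H$ is exactly the data of a $\rho$-equivariant map $\sigma\colon \tilde X \to N$, and the second equation in \eqref{eqn:realHiggs} is precisely the harmonic map equation, the vanishing of the tension field $T(\sigma)$. Thus the existence assertion becomes: every reductive $\rho$ admits a $\rho$-equivariant harmonic map. I would obtain this as a minimizer of the energy $\mathcal{E}(\sigma) = \tfrac12\int_X |d\sigma|^2\,\dvol$ over equivariant maps; the integrand is $\rho$-invariant, hence descends to the compact base $X$. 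Since $N$ is contractible, equivariant maps exist and $\mathcal{E}$ is bounded below, so the entire problem is to produce a minimizer.

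For existence I would run the harmonic map heat flow $\partial_t\sigma = T(\sigma)$ of Eells--Sampson, starting from any smooth equivariant map, where $T(\sigma)$ is the tension field, i.e. the negative $L^2$-gradient of $\mathcal{E}$. Because the target has nonpositive curvature, the Bochner formula controls the energy density: the differential inequality $(\partial_t - \Delta)|d\sigma|^2 \le C\,|d\sigma|^2$, with no positive-curvature contribution from $N$, rules out finite-time blow-up. Hence the flow exists for all $t \in [0,\infty)$ with $\mathcal{E}(\sigma_t)$ nonincreasing, equivariance preserved, and uniform interior estimates supplied by compactness of $X$.

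The main obstacle --- and the only place where reductivity enters --- is passing to the limit as $t\to\infty$, since $N$ is noncompact and the maps may drift off to infinity, so that the infimum of $\mathcal{E}$ need not be attained. The heart of the argument is a dichotomy: either $\{\sigma_t\}$ subconverges to a harmonic map, or the image escapes to infinity in a definite direction, producing a fixed point $\xi \in \partial_\infty N$ of the $\rho$-action, equivalently a proper $\rho$-invariant parabolic subgroup $P \subsetneq G$. Corlette's coercivity estimate shows the second alternative is impossible when $\rho$ is reductive: one bounds the distance the flow travels in terms of the drop in energy, and reductivity guarantees no invariant parabolic, so the minimizing flow remains in a bounded region modulo its stabilizer and a limiting harmonic map $\sigma_\infty$ exists. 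Making this properness precise --- quantifying the escape to infinity and extracting the destabilizing parabolic --- is the genuine technical crux of the proof.

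For the forward direction, that a harmonic metric forces $\rho$ to be reductive, I would argue by contraposition using convexity. If $\rho$ preserves a proper parabolic it fixes some $\xi\in\partial_\infty N$; the associated Busemann function $b_\xi$ is convex on $N$ and $\rho$-invariant up to an additive constant, so $b_\xi\circ\sigma$ descends to a function on $X$. Since $\sigma$ is harmonic and $b_\xi$ is convex, the composition formula gives $\Delta(b_\xi\circ\sigma) = \operatorname{Hess} b_\xi(d\sigma, d\sigma) \ge 0$, so $b_\xi\circ\sigma$ is subharmonic on the compact surface $X$ and hence constant. Differentiating twice then yields $\operatorname{Hess} b_\xi(d\sigma,d\sigma) = 0$, which confines $\sigma$ to a totally geodesic subspace on which $\rho$ factors through the Levi of $P$. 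Iterating this reduction exhibits $\rho$ as valued in a reductive subgroup, which is exactly the required semisimplicity.
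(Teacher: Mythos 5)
First, a point of comparison: the paper offers no proof of this statement at all --- it is quoted directly from Corlette's paper \cite{corlette:1988} --- so your proposal must be measured against Corlette's own argument, whose broad outline (equivariant harmonic-map heat flow into the nonpositively curved space $G/H$ for existence, Busemann-function convexity for the converse) you have in fact reconstructed correctly. The problem lies in the one step you yourself flag as the crux, and there your argument as written is wrong: you assert that ``reductivity guarantees no invariant parabolic.'' That is the definition of \emph{irreducibility}, not of reductivity. A reductive representation can perfectly well preserve proper parabolics: any direct sum of two irreducible representations into $\GL(n,\C)$, or more generally any representation with image in a Levi subgroup $L \subsetneq P \subsetneq G$, is reductive and fixes the point of $\partial_\infty N$ determined by $P$. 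So the dichotomy ``either the flow subconverges or there is an invariant parabolic, which reductivity forbids'' proves existence only for irreducible $\rho$, and the theorem as stated --- for all reductive $\rho$ --- is not reached.

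The missing idea, which is how Corlette closes this loop, is an induction on the group: if the flow (or a minimizing sequence) escapes to infinity, one extracts a fixed point $\xi \in \partial_\infty N$, i.e.\ an invariant parabolic $P$; reductivity is then used not to rule this out but to conjugate $\rho$ into a Levi factor $L$ of $P$; the symmetric space of $L$ embeds totally geodesically in $N = G/H$, so an $L$-equivariant harmonic map --- which exists by induction on $\dim G$ --- is automatically harmonic as a map into $N$. Without this reduction step your existence proof has a genuine hole precisely in the reducible-but-reductive case, which is the generic way reductivity occurs. A second, smaller slip occurs in your converse direction: if $\rho$ fixes $\xi$, the Busemann function satisfies $b_\xi \circ \rho(\gamma) = b_\xi + c(\gamma)$ for a homomorphism $c : \pi_1(X) \to \R$ that need not vanish, so $b_\xi \circ \sigma$ does \emph{not} descend to $X$ and you cannot invoke ``subharmonic on a compact manifold, hence constant.'' What descends is the $1$-form $d(b_\xi \circ \sigma)$; the repair is to observe that $\Delta(b_\xi \circ \sigma)$ is $\pi_1(X)$-invariant, nonnegative by harmonicity of $\sigma$ and convexity of $b_\xi$, and integrates to zero over $X$ by Stokes, hence vanishes identically. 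This yields the same conclusion $\mathrm{Hess}\,b_\xi(d\sigma, d\sigma) \equiv 0$ you wanted, after which your totally geodesic reduction to the Levi, iterated, does establish the forward implication.
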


If $X$ is a closed Riemann surface of genus $g$ then $\Hom(\pi_1(X),G)$, i.e. the set of all surface group representations in $G$, forms a real analytic subspace of the $2g$-fold product $G\times\dots\times G$. The group $G$ acts by conjugation on this space. The quotient $\Hom(\pi_1(X),G)/G$ does not in general have good geometric or even topological properties but restricting to the reductive representations yields a good {\bf moduli space of representations}

\begin{equation}\label{RepG}
\Rep(\pi_1(X),G)=\Hom^+(\pi_1(X),G)/G
\end{equation}

\noi (where the $+$ denotes the restriction to the reductive representations).  The topological invariant $d\in\pi_1(G)$ which labels components of $\mathcal{M}_d(G)$ also labels components of $\Rep(\pi_1(X),G)$, leading to the obvious definition for spaces $\Rep_d(\pi_1(X),G)$.  For semisimple groups $G$, Corlette's theorem together with the equivalence between equations \eqref{eqn:realHiggs} and the Higgs bundle equations \eqref{Higgsequations1} and \eqref{Higgsequations2} thus allows us to identify

\begin{equation}\label{M=R}
\mathcal{M}_d(G)=\Rep_d(\pi_1(X),G)\ .
\end{equation}

\noi This identification, especially when coupled with the Morse theoretic methods described in  the rest of this paper, turns Higgs bundles into an effective tool for studying the moduli spaces of surface group representations.

\subsection{Morse functions}

There are two functions which turn out to be useful for exploring the topology of the moduli spaces $\mathcal{M}_d(G)$, i.e. which have good Morse-theoretic properties. The first, defined on the infinite dimensional configuration space and denoted by

\begin{equation}\label{eqn:YMH}
\YMH : \mathcal{A}(E_{H})\times\Omega^{1,0}(E_{\HC}(\mclie)) \rightarrow \R\ ,
\end{equation}

\noi is a generalization of the Yang-Mills functional on the space of connections.  The second, defined on $\mathcal{M}_d(G)$ itself and denoted by

\begin{equation}\label{Hitchinf}
f : \mathcal{M}_d(G) \rightarrow \R\ ,
\end{equation}

\noi was introduced by Hitchin and relies on the presence of the Higgs field. Both are related to symplectic moment maps.  In the next sections we explore these two functions from the point of view of Morse theory.

\section{The Yang-Mills-Higgs functional}\label{sect:YMH}

The goal of this section is to describe recent developments on the infinite-dimensional Morse theory of the Yang-Mills-Higgs functional. Firstly, we recall the work of Atiyah \& Bott on the Morse theory of the Yang-Mills functional for bundles over a compact Riemann surface. 

\subsection{Background on the Yang-Mills functional and semistable holomorphic bundles}

In the late 1970s and early 1980s, Atiyah and Bott pioneered the use of the Yang-Mills functional as an equivariant Morse function to study the topology of the moduli space of semistable holomorphic bundles over a compact Riemann surface (see \cite{AtiyahBott81} and \cite{AtiyahBott83}). 

\subsubsection{The algebraic picture}

The setup is as follows. Fix a smooth complex vector bundle $E$ over a compact Riemann surface $X$.  This corresponds to taking $G=\U(n)$ in the terminology of Section \ref{subs:modspaces}.  As described in Section \ref{subs:modspaces}, the space $\mathcal{A}^{0,1}(E)$ of holomorphic structures on $E$ can be identified with an affine space locally modeled on $\Omega^{0,1}(\End(E))$, i.e.
\begin{equation*}
\mathcal{A}^{0,1}(E) \cong \bar{\partial}_{A_0} + \Omega^{0,1}(\End(E)) 
\end{equation*}
for a fixed operator $\bar{\partial}_{A_0}$.


Any holomorphic bundle has a canonical \emph{Harder-Narasimhan filtration}
\begin{equation*}
X \times \{ 0 \} = E_0 \subset E_1 \subset \cdots \subset E_k = E 
\end{equation*}
by holomorphic sub-bundles, where, for each $\ell=1, \ldots, k$, the quotient bundle $E_\ell / E_{\ell-1}$ is the maximal semistable sub-bundle of $E / E_{\ell-1}$. The \emph{type} of the Harder-Narasimhan filtration is the vector $( \nu_1, \ldots, \nu_n)$, consisting of the slopes of the quotient bundles $E_\ell / E_{\ell-1}$ in the Harder-Narasimhan filtration, counted with multiplicity $\rank(E_\ell / E_{\ell-1})$. Let $\mathcal{A}_\lambda^{0,1}$ denote the set of holomorphic structures with Harder-Narasimhan type $\lambda$, and let $\mathcal{A}_{ss}^{0,1}$ denote the semistable holomorphic structures. There is a partial ordering on the Harder-Narasimhan type (see \cite{Shatz77}), and Atiyah and Bott show that the stratification 
\begin{equation*}
\mathcal{A}^{0,1} = \mathcal{A}_{ss}^{0,1} \cup \bigcup_{\lambda} \mathcal{A}^{0,1}_\lambda
\end{equation*}
by type has the following properties.
\begin{enumerate}
\item $\displaystyle{\overline{\mathcal{A}^{0,1}_\lambda} \subseteq \bigcup_{\mu \geq \lambda} \mathcal{A}^{0,1}_\mu}$.

\item $\displaystyle{H_\mathcal{G}^* \left( \mathcal{A}^{0,1}_\mu \right) \cong \bigoplus_{\ell=1}^k H_{\mathcal{G}_\ell}^* \left( \mathcal{A}^{0,1}_{ss}(E_\ell / E_{\ell-1}) \right) }$, where $E_1, \ldots, E_\ell$ are the bundles in the Harder-Narasimhan filtration, and $\mathcal{G}_\ell$ is the gauge group of $E_\ell / E_{\ell-1}$ for $\ell=1, \ldots, k$.

\item Each $\mathcal{A}^{0,1}_\lambda$ is a manifold with constant codimension $n_\lambda$, and there is an isomorphism
\begin{equation*}
H_\mathcal{G}^*\left( \bigcup_{\mu \leq \lambda} \mathcal{A}^{0,1}_\mu, \bigcup_{\mu < \lambda} \mathcal{A}^{0,1}_\mu \right) \cong H_\mathcal{G}^{*-n_\lambda} \left( \mathcal{A}^{0,1}_\mu \right) .
\end{equation*}

\item The stratification is \emph{equivariantly perfect} in that, for each type $\lambda$, the long exact sequence in $\mathcal{G}$-equivariant cohomology
\begin{equation*}
\cdots \rightarrow H_\mathcal{G}^*\left( \bigcup_{\mu \leq \lambda} \mathcal{A}^{0,1}_\mu, \bigcup_{\mu < \lambda} \mathcal{A}^{0,1}_\mu \right) \rightarrow H_\mathcal{G}^*\left( \bigcup_{\mu \leq \lambda} \mathcal{A}^{0,1}_\mu \right) \rightarrow H_\mathcal{G}^* \left( \bigcup_{\mu < \lambda} \mathcal{A}^{0,1}_\mu \right) \rightarrow \cdots 
\end{equation*}
splits into short exact sequences.
\end{enumerate} 

As a consequence of the above properties, one can (a) inductively compute the equivariant Poincar\'e polynomial of $\mathcal{A}^{0,1}_{ss}$ in terms of the equivariant Poincar\'e polynomials of all the lower rank spaces of semistable bundles, and (b) show that the inclusion $\mathcal{A}^{0,1}_{ss} \hookrightarrow \mathcal{A}^{0,1}$ induces a surjective map
\begin{equation*}
\kappa : H_\mathcal{G}^*(\mathcal{A}^{0,1}) \rightarrow H_\mathcal{G}^*(\mathcal{A}^{0,1}_{ss}) .
\end{equation*}

This map is known as the \emph{Kirwan map}. In \cite{Kirwan84}, Kirwan shows that the strategy described above applies in much more generality: it extends to compact symplectic manifolds with a Hamiltonian action of a compact connected Lie group, and that one can (a) inductively compute the Poincar\'e polynomial of the symplectic quotient, and (b) show that the Kirwan map is surjective.

\subsubsection{Morse theory of the Yang-Mills functional}

Now suppose that $E$ also has a Hermitian structure.  In the notation of Section \ref{subs:modspaces} this means a reduction of structure group from $\HC=\GL(n,\C)$ to $H=\U(n)$. For simplicity of notation, in this section we omit the subscripts $\HC$ or $H$. Then there is an identification of $\mathcal{A}^{0,1}(E)$ with the space $\mathcal{A}(E)$ of connections compatible with this structure (note that in higher dimensions, the integrability condition requires that we restrict attention to the subset of connections $d_A \in \mathcal{A}(E)$ whose curvature $F_A$  satisfies $F_A \in \Omega^{1,1}(\ad E)$). Therefore, there is an induced stratification 
\begin{equation}\label{eqn:connections-HN-stratification}
\mathcal{A} = \bigcup_{\lambda} \mathcal{A}_\lambda
\end{equation}
by the Harder-Narasimhan type of the corresponding holomorphic structure. Even though a change in the Hermitian metric will change the identification $\mathcal{A}^{0,1} \cong \mathcal{A}$, for a fixed connection $d_A \in \mathcal{A}$ the Harder-Narasimhan type of the associated holomorphic structure does not change when the metric changes. Therefore the stratification \eqref{eqn:connections-HN-stratification} is well-defined independently of the choice of metric.

The \emph{Yang-Mills functional}, $\YM : \mathcal{A} \rightarrow \R$, is defined by
\begin{equation*}
\YM(d_A) = \| F_A \|^2 = \int_X \left| F_A \right|^2 \dvol ,
\end{equation*}
and so we also have an induced functional $\YM : \mathcal{A}^{0,1} \rightarrow \R$, which depends on the identification $\mathcal{A}^{0,1} \cong \mathcal{A}$, and therefore on the choice of Hermitian metric.

The theorem of Narasimhan and Seshadri, i.e. the original Hitchin-Kobayashi correspondence mentioned in Section \ref{subs:modspaces}, (see \cite{NarasimhanSeshadri65} for the original proof and \cite{Donaldson83} for Donaldson's gauge-theoretic proof, which is more in the spirit of the Atiyah \& Bott approach) shows that the space of polystable holomorphic structures are those that are $\mathcal{G}^\C$-equivalent to the minimum of $\YM$. In fact, by considering the Yang-Mills flow, more structure is apparent: it is a consequence of the work of Daskalopoulos in \cite{Daskal92} and R{\aa}de in \cite{Rade92} that the Yang-Mills flow defines a continuous $\mathcal{G}$-equivariant deformation retraction of $\mathcal{A}_{ss}$ onto the minimum of $\YM$ and that the limit is determined by the Seshadri filtration (see \cite[Ch V, Thm 1.15]{kobayashi:1987}) of the initial condition.

This relationship between the algebraic geometry of the Harder-Narasimhan filtration and the analysis of the Yang-Mills flow also has an analog for unstable bundles. Atiyah and Bott show that the Yang-Mills functional achieves a minimum on each stratum $\mathcal{A}_\lambda$, and that this minimising set (call it $\mathcal{C}_\lambda$) is precisely the set of critical points for $\YM$ that are contained in $\mathcal{A}_\lambda$. Moreover, the Morse index at each critical point is the same as the codimension of the corresponding Harder-Narasimhan stratum. The results of \cite{Daskal92} and \cite{Rade92} then show that the Yang-Mills flow defines a $\mathcal{G}$-equivariant deformation retraction of $\mathcal{A}_\lambda$ onto $\mathcal{C}_\lambda$ and that the limit of the flow with initial condition $d_{A_0}$ is isomorphic to the graded object of the Harder-Narasimhan-Seshadri double filtration of $d_{A_0}$. 

Therefore, the problem of studying the $\mathcal{G}$-equivariant cohomology of the space of semistable holomorphic structures is the same as that of studying the $\mathcal{G}$-equivariant cohomology of the minimum of $\YM$, and the inductive formula in terms of the cohomology of the Harder-Narasimhan strata now has a Morse-theoretic analog in terms of the equivariant cohomology of the critical sets for $\YM$.

It is worth mentioning here the relationship with Palais's work. The analytic details of the Morse theory for the Yang-Mills functional are slightly different than the cases studied in \cite{Palais63}. In particular, since the critical sets are infinite-dimensional, then the Palais-Smale condition C fails for the Yang-Mills functional. To recover an analog of condition C, the solution is to instead look at the space of connections modulo the gauge group. Using Uhlenbeck's compactness theorem from \cite{Uhlenbeck82}, Daskalopoulos proves in \cite[Proposition 4.1]{Daskal92} that condition C does hold for $\YM$ on $\mathcal{A} / \mathcal{G}$, and so one should think of the Yang-Mills functional for a bundle over a compact Riemann surface as satisfying a $\mathcal{G}$-equivariant condition C on the space $\mathcal{A}$.

A modified version of condition C is also valid in higher dimensions, where bubbling needs to be taken into account. See for example \cite{Sedlacek82} and \cite[Proposition 4.5]{Taubes84}.

\subsubsection{Higgs bundles}

One would like to extend this picture to spaces of Higgs bundles. We first discuss the case of $G$-Higgs bundles for $G=\GL(n,\C)$.  

\begin{remark}\label{rem:principal-vector-reconcile}
When $G = \GL(n, \C)$ we have $\mathfrak{m}^\C = \mathfrak{gl}(n, \C)$ (see Example \ref{ex:Gc}) and so one can consider the vector bundle $E$ associated to the original principal bundle and think of the space $\mathcal{A}^{0,1}$ of holomorphic structures as an affine space modeled on $\Omega^{0,1}(\End(E))$ and the Higgs field as taking values in $\Omega^0(\End(E) \otimes K)$. Therefore
\begin{equation*}
E_H = \ad(E) \quad \text{and} \quad E_{H^\C}(\mathfrak{m}^\C) = \End(E).
\end{equation*}
Moreover, if we choose the compact real form to be $\mathfrak{u}(n) \subseteq \mathfrak{gl}(n, \C)$, then the involution $\tau : \mathfrak{gl}(n, \C) \rightarrow \mathfrak{gl}(n, \C)$ is given by $\tau(u) = - u^*$.
\end{remark}

Atiyah and Bott's observation that the curvature $F_A$ is a moment map for the action of $\mathcal{G}$ on the space $\mathcal{A} \cong \mathcal{A}^{0,1}$ also extends to the hyperk\"ahler setting of Higgs bundles. The cotangent bundle $T^* \mathcal{A}^{0,1} \cong \mathcal{A}^{0,1} \times \Omega^{1,0}(\End(E))$ has a \emph{hyperk\"ahler structure}, and the action of the gauge group $\mathcal{G}$ has associated moment maps
\begin{align*}
\mu_1(\bar{\partial}_E, \varphi) & = F_A + [\varphi, \varphi^*]  \\
\mu_\C(\bar{\partial}_E, \varphi) = \mu_2 + i \mu_3 & = 2i \bar{\partial}_E \varphi .
\end{align*}

The \emph{hyperk\"ahler moment map} combines these three moment maps using the imaginary quaternions
\begin{equation*}
\mu_{hk} (\bar{\partial}_E, \varphi) = i \mu_1 + j \mu_2 + k \mu_3 \in \Lie(\mathcal{G})^* \otimes_\R \R^3 .
\end{equation*}

In analogy with the Yang-Mills functional (which is the norm-square of the moment map), the \emph{full Yang-Mills-Higgs functional} is defined to be the norm-square of the hyperk\"ahler moment map
\begin{align}\label{eqn:full-YMH-functional}
\begin{split}
\YMH : \mathcal{A}^{0,1} \times \Omega^{1,0}(\End(E)) & \rightarrow \R \\
 (\bar{\partial}_E, \varphi) \mapsto \| \mu_1 \|^2 + \| \mu_2 \|^2 + \| \mu_3 \|^2 & = \| F_A + [\varphi, \varphi^*] \|^2 + 4 \| \bar{\partial}_E \varphi \|^2 .
\end{split}
\end{align} 

\begin{remark}
Recall from Remark \ref{rem:principal-vector-reconcile} that $\mathcal{A}^{0,1} \times \Omega^{1,0}(\End(E)) \cong \mathcal{A}(E_H) \times \Omega^{1,0}(E_{H^\C}(\mathfrak{m}^\C)$, and so the domain of $\YMH$ defined above is consistent with that given in \eqref{eqn:YMH}. Also, since the involution $\tau$ is given by $\tau(u) = - u^*$, then we also have $\| F_A + [\varphi, \varphi^*] \|^2 = \| F_A - [\varphi, \tau(\varphi)] \|^2$.
\end{remark}

The full Yang-Mills-Higgs functional can also be re-written in the following form (compare with \cite[Proposition 2.1]{Bradlow90} for the case where $\varphi \in \Omega^0(E)$).

\begin{lemma}
\begin{multline}
\YMH(\bar{\partial}_A, \varphi) = \| F_A \|^2 + \| [\varphi, \varphi^*] \|^2  + 2 \| \nabla_A \varphi \|^2 + 2 \left< \varphi \circ \Ric, \varphi \right> \\
- 2 \| \partial_A \varphi \|^2 - 2 \| \partial_A^* \varphi \|^2 + 2 \| \bar{\partial}_A \varphi \|^2 .
\end{multline}
\end{lemma}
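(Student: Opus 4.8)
The plan is to expand the square in \eqref{eqn:full-YMH-functional} and then absorb the resulting cross term into a Weitzenb\"ock (Bochner--Kodaira) identity for $\varphi$ regarded as an $\End(E)$-valued $1$-form over the K\"ahler surface $X$. Expanding the first summand gives
\begin{equation*}
\| F_A + [\varphi, \varphi^*] \|^2 = \| F_A \|^2 + \| [\varphi, \varphi^*] \|^2 + 2 \langle F_A, [\varphi, \varphi^*] \rangle ,
\end{equation*}
so that $\| F_A \|^2$ and $\| [\varphi, \varphi^*] \|^2$ already match the target expression. Since the definition also contributes $4 \| \bar{\partial}_A \varphi \|^2$, the entire lemma reduces to the single identity
\begin{equation*}
\| \partial_A \varphi \|^2 + \| \bar{\partial}_A \varphi \|^2 + \| \partial_A^* \varphi \|^2 = \| \nabla_A \varphi \|^2 + \langle \varphi \circ \Ric, \varphi \rangle - \langle F_A, [\varphi, \varphi^*] \rangle ,
\end{equation*}
as one verifies by solving this for $2\langle F_A,[\varphi,\varphi^*]\rangle$, substituting into the expansion, and collecting the $\|\bar{\partial}_A\varphi\|^2$ terms (the term $\|\partial_A\varphi\|^2$ in fact vanishes for dimension reasons on a surface, but is retained for structural clarity).

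The displayed identity is precisely the integrated Bochner--Weitzenb\"ock formula for the $1$-form $\varphi$. First I would observe that, by type, $d_A \varphi = \bar{\partial}_A \varphi$ and $d_A^* \varphi = \partial_A^* \varphi$ on $X$ (the $(2,0)$-component $\partial_A \varphi$ vanishing), so the left-hand side is the Hodge pairing $\langle (d_A d_A^* + d_A^* d_A) \varphi, \varphi \rangle$. The pointwise Weitzenb\"ock formula expresses the Hodge Laplacian $d_A d_A^* + d_A^* d_A$ on bundle-valued $1$-forms as the rough Laplacian $\nabla_A^* \nabla_A$ plus two zeroth-order curvature operators, one built from the Ricci curvature of $(X,\omega)$ and one from the curvature $F_A$. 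Integrating over the closed surface $X$ and pairing with $\varphi$ turns $\langle \nabla_A^* \nabla_A \varphi, \varphi \rangle$ into $\| \nabla_A \varphi \|^2$ and converts the two curvature operators into $\langle \varphi \circ \Ric, \varphi \rangle$ and $-\langle F_A, [\varphi, \varphi^*] \rangle$. Alternatively, and perhaps more cleanly, one can derive the same identity directly from the K\"ahler identities $\partial_A^* = -i[\Lambda, \bar{\partial}_A]$ and $\bar{\partial}_A^* = i[\Lambda, \partial_A]$ together with integration by parts, which is the route taken for the analogous vortex identity in \cite[Proposition 2.1]{Bradlow90}.

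The step I expect to be the main obstacle is the precise identification of the two curvature operators, with their correct signs and normalisations. The curvature acting on $\varphi$ decomposes according to the two tensor factors of the bundle $\End(E) \otimes K$ in which the Higgs field takes values. The $\End(E)$ factor contributes the curvature $F_A$, which acts by the commutator; pairing this with $\varphi$ and using $\tau(\varphi) = -\varphi^*$ (see the remark following \eqref{eqn:full-YMH-functional}) produces exactly $-\langle F_A, [\varphi, \varphi^*] \rangle$, which is what cancels the cross term from the first paragraph. The $K$ factor contributes the Chern curvature of the canonical bundle, which on a Riemann surface is the Ricci form of the K\"ahler metric; this is the origin of the term $\langle \varphi \circ \Ric, \varphi \rangle$. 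Correctly matching the numerical constants — tracking the factors of $i$ and $2$ through the K\"ahler identities and normalising the curvature of $K$ against $\Ric$ — is the delicate bookkeeping on which the identity ultimately rests.
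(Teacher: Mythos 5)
Your proposal is correct and takes essentially the same route as the paper: expand the square in the definition of $\YMH$, convert the cross term $2\left< F_A, [\varphi,\varphi^*]\right>$ using the Bourguignon--Lawson Weitzenb\"ock identity for the $\End(E)$-valued $1$-form $\varphi$ (rough Laplacian plus Ricci term plus bundle-curvature term), and finish by splitting $\| d_A \varphi \|^2$ by type with $d_A^*\varphi = \partial_A^*\varphi$. The sign and normalisation of the curvature operator, which you flag as the delicate point, is handled in the paper exactly as you suggest, by appealing to \cite[Theorem 3.1]{BourguignonLawson81} rather than by explicit verification.
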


\begin{proof}
First we expand the Yang-Mills-Higgs functional as
\begin{equation}\label{eqn:expanded-YMH}
\YMH(\bar{\partial}_A, \varphi) = \| F_A \|^2 + \| [\varphi, \varphi^*] \|^2 + 2 \Re \left< F_A, [\varphi, \varphi^*] \right> + 4 \| \bar{\partial}_A \varphi \|^2 .
\end{equation}

Next, recall the Weitzenbock identity for a connection $d_A$ (with covariant derivative denoted by $\nabla_A$) on a bundle $F$ and a one-form $\eta \in \Omega^1(F)$. Let
\begin{equation*}
\mathfrak{R}^A(\eta)_X := \sum_{j=1}^n \left[ R^A(e_j, X), \eta(e_j) \right] ,
\end{equation*}
where $R$ denotes the Riemannian curvature tensor on the base manifold $M$ with respect to the Levi-Civita connection $\nabla$, and let $\Ric$ denote the Ricci tensor of $M$. Then (see \cite[Theorem 3.1]{BourguignonLawson81}) the Weitzenbock identity is
\begin{equation}\label{eqn:weitzenbock}
\Delta_A \eta = \nabla_A^* \nabla_A \eta + \eta \circ \Ric + \mathcal{R}^A(\eta) .
\end{equation}
The term coupling the curvature and the Higgs field in \eqref{eqn:expanded-YMH} can now be written as
\begin{align*}
2 \Re \left< F_A, [\varphi, \varphi^*] \right> & = - 2 \left< \mathcal{R}^A(\varphi), \varphi \right> \\
 & = 2 \left< \nabla_A^* \nabla_A \varphi, \varphi \right> + 2 \left< \varphi \circ \Ric, \varphi \right> - 2 \left<  \Delta_A \varphi, \varphi \right> \\
 & = 2 \| \nabla_A \varphi \|^2 + 2 \left< \varphi \circ \Ric, \varphi \right> - 2 \| d_A \varphi \|^2 - 2 \| d_A^* \varphi \|^2 ,
\end{align*}
where in the second step we use the Weitzenbock identity \eqref{eqn:weitzenbock}. Since $\varphi$ is a $(1,0)$ form, then $d_A^* \varphi = \partial_A^* \varphi$. Decomposition by type means that we also have $\| d_A \varphi \|^2 = \| \bar{\partial}_A \varphi \|^2 + \| \partial_A \varphi \|^2$. Subsitituting this into the formula for the Yang-Mills-Higgs functional gives us
\begin{multline*}
\YMH(\bar{\partial}_A, \varphi) = \| F_A \|^2 + \| [\varphi, \varphi^*] \|^2  + 2 \| \nabla_A \varphi \|^2 + 2 \left< \varphi \circ \Ric, \varphi \right> \\
- 2 \| \partial_A \varphi \|^2 - 2 \| \partial_A^* \varphi \|^2 + 2 \| \bar{\partial}_A \varphi \|^2 ,
\end{multline*}
as required.
\end{proof}

Unfortunately, the full Yang-Mills-Higgs functional described above is not easy to study from the perspective of Morse theory. For example, a complete classification of the critical sets does not exist (although one can classify those satisfying $\bar{\partial}_A \varphi = 0$), and currently there are no theorems on the long-time existence of the gradient flow (again, the difficulties occur when $\bar{\partial}_A \varphi \neq 0$). From this perspective, it is simpler to restrict to the space $\mathcal{B}(\GL(n, \C)) := \{ (\bar{\partial}_E, \varphi) \, : \, \bar{\partial}_E \varphi = 0 \}$ of Higgs bundles, although this introduces additional complications due to singularities in the space $\mathcal{B}(\GL(n, \C))$. Fortunately these complications can be dealt with for cases of low rank; this will be explained later in the section.

On restriction to the space of Higgs bundles, the Yang-Mills-Higgs functional takes on the form
\begin{align}\label{eqn:YMH-functional}
\begin{split}
\YMH : \mathcal{B}(\GL(n, \C)) & \rightarrow \R \\
 (\bar{\partial}_E, \varphi) & \mapsto \| F_A + [\varphi, \varphi^*] \|^2  .
\end{split}
\end{align}

Some of the theory from holomorphic bundles does carry over to $\GL(n,\C)$-Higgs bundles. The Hitchin-Kobayashi correspondence shows that the polystable Higgs bundles are precisely those that are $\mathcal{G}^\C$-equivalent to a minimiser for the Yang-Mills-Higgs functional. There is also an analogous Harder-Narasimhan filtration (defined using $\varphi$-invariant holomorphic sub-bundles) and a corresponding algebraic stratification of the space of Higgs bundles.

The space of Higgs bundles, i.e. $\mathcal{B}(\GL(n,\C))$, may be singular, so \emph{a priori} it is not obvious how to extend the rest of the theory of Atiyah and Bott to study the topology of the space of semistable Higgs bundles. More seriously, the negative eigenspace of the Hessian does not have constant dimension on each connected component of the set of critical points of $\YMH$. As explained in the next few sections, it is possible to get around these difficulties, and we have done this in \cite{DWWW11}, \cite{wentworthwilkin-pairs} and \cite{wentworthwilkin-u21}. Some of the constructions are general, however some have been done by hand for specific cases, so, for now, we have restricted to certain low-rank situations.

The general strategy for extending the Atiyah and Bott theory to the space of Higgs bundles (outlined in more detail in the next few sections) is as follows. Firstly, the analog of the results of Daskalopoulos and R{\aa}de from \cite{Daskal92} and \cite{Rade92} for the Yang-Mills flow also holds for the Yang-Mills-Higgs functional. Secondly, for certain low-rank cases, we are able to show that as the Yang-Mills-Higgs functional passes a critical value, then the topology of the space changes by attaching a certain topological space determined by the negative eigenspace of the Hessian at each critical point. Thirdly, we can compute the change in cohomology as the Yang-Mills-Higgs functional passes a critical value, and thus can compute the cohomology of the moduli space of Higgs bundles for certain low-rank cases.

Finally, it is worth mentioning that the above strategy applies to a much broader class of moduli spaces than just Higgs bundles. For example, the paper \cite{wentworthwilkin-pairs} uses this method to produce new information about the space of rank $2$ stable pairs.

\subsection{Gradient Flow}\label{sect:gradflow}

This section describes the results on the convergence of the gradient flow of the Yang-Mills-Higgs functional on the space of Higgs bundles.

Recall that there is already an algebraic stratification of the space of Higgs bundles. Each Higgs bundle $(\bar{\partial}_A, \varphi)$ has a maximal semistable Higgs sub-bundle (a $\varphi$-invariant holomorphic sub-bundle, whose holomorphic structure and Higgs field are induced from $(\bar{\partial}_A, \varphi)$), and from this one can construct a filtration of $(\bar{\partial}_A, \varphi)$ analogous to the Harder-Narasimhan filtration for holomorphic bundles. The type $\nu$ of the filtration is the vector consisting of the slopes of the quotient bundles counted with multiplicity, and there is a stratification
\begin{equation}\label{eqn:H-N-strata}
\mathcal{B} = \bigcup_{\nu} \mathcal{B}_\nu ,
\end{equation}

\noi (where for simplicity we have dropped $\GL(n,\C)$ from the notation).  The main theorem of \cite{Wilkin08} shows that the Yang-Mills-Higgs flow on $\mathcal{B}$ respects the Harder-Narasimhan filtration in the following sense.

\begin{theorem}\label{thm:YMH-flow-convergence}
The Yang-Mills-Higgs flow with any initial condition $(\bar{\partial}_A, \varphi)$ exists for all time and converges in the smooth topology to a unique limit point, which is a critical point of $\YMH$. Moreover, this critical point is isomorphic to the graded object of the Harder-Narasimhan-Seshadri double filtration of $(\bar{\partial}_A, \varphi)$.
\end{theorem}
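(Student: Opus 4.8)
The plan is to treat this as the Higgs-bundle analogue of the Daskalopoulos--R{\aa}de analysis of the Yang--Mills flow \cite{Daskal92}, \cite{Rade92}, importing the metric-flow technology developed by Simpson \cite{simpson:1994} for the Hermitian--Einstein equation for Higgs bundles. The starting observation is that, because $\YMH$ is the norm-square of the moment map $\mu_1 = F_A + [\varphi,\varphi^*]$ for the $\mathcal{G}$-action, its gradient is the image under the complex structure $I$ of the infinitesimal action of $\mu_1$, and is therefore tangent to the $\mathcal{G}^{\C}$-orbit inside $\mathcal{B}(\GL(n,\C)) = \mu_{\C}^{-1}(0)$. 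Consequently the flow preserves the complex gauge orbit of $(\bar\partial_E,\varphi)$, and may be reinterpreted as Donaldson's heat flow for a time-dependent Hermitian metric $h_t$ on a \emph{fixed} holomorphic Higgs bundle, governed by a nonlinear parabolic equation of the form $h_t^{-1}\dot h_t = -\,i\Lambda\!\left(F_{h_t}+[\varphi,\varphi^{*_{h_t}}]\right)+c$. This is the key structural simplification: the holomorphic type and the isomorphism class of $(\bar\partial_E,\varphi)$ are constant along the flow.

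First I would establish long-time existence. Short-time existence is standard parabolic theory for the metric equation above. To rule out finite-time blow-up one uses that $\YMH$ decreases along the flow, which bounds $\|\Lambda(F_{h_t}+[\varphi,\varphi^{*_{h_t}}])\|_{L^2}$, together with a Bochner/maximum-principle estimate (as in Simpson \cite{simpson:1994}) controlling the pointwise norm of the mean curvature $e_t = i\Lambda(F_{h_t}+[\varphi,\varphi^{*_{h_t}}])$; since $X$ is a compact Riemann surface there is no bubbling to contend with, so these estimates close up and give existence for all $t$.

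The central step is convergence to a \emph{unique} smooth limit. From the uniform bounds one first extracts a weakly convergent subsequence $h_{t_j}$ whose limit is a critical point of $\YMH$, i.e. a solution for which $\Lambda(F+[\varphi,\varphi^*])$ is constant on each piece of a direct-sum decomposition. To promote weak subconvergence to smooth convergence of the full flow, I would invoke a {\L}ojasiewicz--Simon gradient inequality: $\YMH$ is real-analytic, so near a critical point $\|\grad\YMH\| \geq C\,|\YMH - \YMH(\text{crit})|^{1-\theta}$ for some $\theta\in(0,\tfrac12]$, which forces the flow to have finite length in the appropriate Sobolev norm and hence to converge to a single limit point; elliptic bootstrapping on the limiting critical-point equation then upgrades the convergence to the smooth topology. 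The main obstacle lies precisely here: the space $\mathcal{B}(\GL(n,\C))$ is singular and the critical sets are non-isolated and of varying index, so the {\L}ojasiewicz argument must be carried out with care (working modulo gauge, and on the possibly singular orbit), and the a priori control of $e_t$ along the flow must be strong enough to survive the passage to the limit.

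Finally I would identify the limit with the graded object $\Gr(\bar\partial_A,\varphi)$ of the Harder--Narasimhan--Seshadri double filtration. Algebraically, each Higgs bundle admits a maximal semistable Higgs sub-bundle and hence a canonical filtration by $\varphi$-invariant sub-bundles, each semistable quotient of which carries a Seshadri filtration by stable Higgs sub-bundles of equal slope, so that the associated graded object is a direct sum of stable Higgs bundles. Since the flow preserves the complex gauge orbit for all finite times while its limit lies in the closure of that orbit, the task is to show that this analytic limit is holomorphically isomorphic to $\Gr(\bar\partial_A,\varphi)$. I would argue that the destabilising sub-Higgs-bundles persist in the limit --- the maximal destabilising sub-bundle appears as a $\varphi$-invariant holomorphic sub-bundle of the limit on which the curvature term is constant --- matching the ranks and slopes recorded by the filtration type $\nu$, and then induct on the length of the filtration to show the limit splits as the direct sum dictated by the double filtration. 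This is the exact Higgs analogue of the identification of the Yang--Mills limit with the Harder--Narasimhan--Seshadri graded object quoted earlier for the work of \cite{Daskal92} and \cite{Rade92}.
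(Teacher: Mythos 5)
Your overall architecture is sound and agrees with the framework behind Theorem \ref{thm:YMH-flow-convergence}: you correctly observe that the gradient of $\YMH$ is the infinitesimal action of $-i*\mu_1$, hence tangent to the $\mathcal{G}^\C$-orbit, so the flow is Simpson's metric heat flow on a fixed Higgs bundle (this is exactly how existence and uniqueness are obtained here, cf.\ the proof of Lemma \ref{lem:real-flow-existence} via \cite{Simpson88}, \cite{Donaldson85}, \cite{Hong01}), and your identification of the limit with the graded object by showing the destabilising $\varphi$-invariant sub-bundles persist in the limit and inducting on the length of the filtration is the same strategy as in \cite{Wilkin08} (following Daskalopoulos \cite{Daskal92} for Yang--Mills). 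Where you genuinely diverge is the central convergence step. The paper's route, i.e.\ that of \cite{Wilkin08}, is \emph{not} a {\L}ojasiewicz--Simon argument: it applies the Moser iteration technique to obtain improved Sobolev estimates along the flow lines, then uses Uhlenbeck's compactness theorem \cite{Uhlenbeck82} to get strong convergence in the appropriate norm; equivalently, it recovers a $\mathcal{G}$-equivariant analogue of the Palais--Smale condition C by restricting to complex gauge orbits, which is what makes the non-compact, infinite-dimensional critical sets tractable. Your route instead imports R{\aa}de's method \cite{Rade92}: subsequential convergence plus a gradient inequality to force finite length of the trajectory. This is a legitimate alternative (and it has the advantage of delivering uniqueness of the limit point and a convergence rate directly, without first identifying the limit), but be aware that its crux --- the {\L}ojasiewicz--Simon inequality for $\YMH$ --- is asserted rather than available off the shelf: R{\aa}de's inequality is for the pure Yang--Mills functional, and extending it to the coupled functional with the gauge-invariance degeneracy and the singularities of $\mathcal{B}$ is a substantial piece of analysis in its own right (one can sidestep the singularity of $\mathcal{B}$ by working on the ambient affine space, where the flow of the ambient functional preserves $\mathcal{B}$, but the inequality itself still has to be proved). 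Note also that even on your route the Moser-type a priori estimates are not avoided: you need them to get the strong subsequential convergence to which the gradient inequality is applied, so the paper's estimates are a subset of what your proof requires.
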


Again, it is worth pointing out the connection with the Palais-Smale condition C.
In this context, not only are the critical sets infinite-dimensional, but, even after dividing
by the gauge group, they are non-compact. The necessary compactness ensured by condition C
can, however, be recovered by restricting to complex gauge orbits.  This allows one to prove an analog of the result in \cite{Daskal92} for the Yang-Mills functional. Alternatively, as in \cite{Wilkin08},  one can show that the gradient 
flow converges by applying the Moser iteration technique to obtain improved Sobolev estimates along the 
flow lines, from which Uhlenbeck's compactness theorem gives strong convergence in the appropriate norm.

Each Harder-Narasimhan stratum $\mathcal{B}_\nu$ contains a critical set (corresponding to the minimum of $\YMH$ restricted to $\mathcal{B}_\nu$), and the Harder-Narasimhan type $\nu$ is determined by the value of $F_A + [\varphi, \varphi^*]$ on this critical set. The above theorem implies that there is a Morse stratification 
\begin{equation}\label{eqn:Morse-strata}
\mathcal{B} = \bigcup_{\nu} \mathcal{B}_\nu^{Morse}
\end{equation}
where $(\bar{\partial}_A, \varphi)$ is in the \emph{Morse stratum} $\mathcal{B}_\nu^{Morse}$ if and only if the limit of the flow with initial conditions $(\bar{\partial}_A, \varphi)$ is in the critical set $\mathcal{C}_\nu$ defined earlier.

The methods of \cite{Wilkin08} also show that the gradient flow deformation retraction of each Morse stratum onto the associated critical set is continuous. As a consequence, we see that 
\begin{corollary}\label{cor:algebraic-equals-analytic}
\begin{enumerate}
\item $\mathcal{B}_\nu = \mathcal{B}_\nu^{Morse}$ for all types $\nu$, i.e. the Morse strata are the same as the Harder-Narasimhan strata, and

\item the flow defines a continuous $\mathcal{G}$-equivariant deformation retraction of each stratum $\mathcal{B}_\nu$ onto the corresponding critical set $\mathcal{C}_\nu$.
\end{enumerate}
\end{corollary}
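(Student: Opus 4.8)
The plan is to deduce both parts directly from the convergence statement in Theorem \ref{thm:YMH-flow-convergence}, using the single observation that passing to the graded object of the Harder--Narasimhan--Seshadri double filtration does not alter the Harder--Narasimhan type. For part (1), I would first record the two facts that make the stratifications comparable: the collection $\{\mathcal{B}_\nu\}$ is a partition of $\mathcal{B}$ indexed by the possible types $\nu$, and $\{\mathcal{B}_\nu^{Morse}\}$ is also a partition of $\mathcal{B}$ indexed by the same set, since Theorem \ref{thm:YMH-flow-convergence} guarantees that every initial condition flows to a unique limit lying in exactly one critical set $\mathcal{C}_\nu$. It then suffices to establish the single containment $\mathcal{B}_\nu \subseteq \mathcal{B}_\nu^{Morse}$ for every $\nu$: once this is known, a standard argument for two partitions indexed by the same set forces equality. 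Indeed, if $x \in \mathcal{B}_\nu^{Morse}$ then $x \in \mathcal{B}_{\nu'}$ for some $\nu'$, whence $x \in \mathcal{B}_{\nu'}^{Morse}$ by the containment, and disjointness of the Morse strata gives $\nu' = \nu$, so $x \in \mathcal{B}_\nu$.

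The containment is where the key observation enters. Given $(\bar{\partial}_A, \varphi) \in \mathcal{B}_\nu$, Theorem \ref{thm:YMH-flow-convergence} says the flow converges to the graded object $\operatorname{Gr}(\bar{\partial}_A, \varphi)$ of the Harder--Narasimhan--Seshadri double filtration. I would check that this graded object again has Harder--Narasimhan type $\nu$: the Harder--Narasimhan filtration of $(\bar{\partial}_A,\varphi)$ has $\varphi$-invariant quotients of strictly decreasing slope, the associated graded is the direct sum of these quotients, and the Seshadri refinement only splits each semistable quotient into stable pieces of the same slope, so the list of slopes recorded by the type is unchanged. Since the type of a critical point is exactly the datum labelling the critical set containing it (the type is read off from the value of $F_A + [\varphi,\varphi^*]$, as noted just before \eqref{eqn:Morse-strata}), and $\mathcal{C}_\nu$ is the critical set inside $\mathcal{B}_\nu$, the limit $\operatorname{Gr}(\bar{\partial}_A,\varphi)$ lies in $\mathcal{C}_\nu$. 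Hence $(\bar{\partial}_A,\varphi) \in \mathcal{B}_\nu^{Morse}$, which gives the required containment and, with the partition argument, part (1).

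For part (2), the deformation retraction is the flow itself, reparametrised so that $t = \infty$ corresponds to the limit point. I would define $r \colon [0,\infty] \times \mathcal{B}_\nu \to \mathcal{B}_\nu$ by letting $r(t,\cdot)$ be the time-$t$ flow and $r(\infty,\cdot)$ the limit map. Part (1) shows the Morse stratum $\mathcal{B}_\nu = \mathcal{B}_\nu^{Morse}$ is flow-invariant, so $r$ indeed lands in $\mathcal{B}_\nu$; Theorem \ref{thm:YMH-flow-convergence} places $r(\infty, x)$ in $\mathcal{C}_\nu$; and critical points are fixed by the flow, so $r(\infty,\cdot)$ restricts to the identity on $\mathcal{C}_\nu$ and $r(0,\cdot) = \mathrm{id}$. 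Equivariance under the real gauge group $\mathcal{G}$ is immediate because $\YMH$ is $\mathcal{G}$-invariant and $\mathcal{G}$ acts by isometries, so the gradient vector field is $\mathcal{G}$-equivariant and its flow commutes with the $\mathcal{G}$-action.

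The main obstacle is the joint continuity of $r$, and in particular continuity at $t = \infty$, i.e. continuity of the limit map $x \mapsto r(\infty, x)$. Continuity for finite time is the standard dependence of a parabolic flow on its initial data in the appropriate Sobolev setting, but controlling the limit requires uniform-in-time estimates: one needs nearby initial conditions to have flow lines that stay uniformly close for all time and converge at comparable rates. This is exactly where the analytic input of \cite{Wilkin08} enters, namely the Moser iteration yielding improved Sobolev estimates along flow lines together with Uhlenbeck's compactness theorem, which give the strong convergence and the continuity of the gradient flow deformation retraction onto $\mathcal{C}_\nu$. I would therefore invoke these estimates from \cite{Wilkin08} to conclude continuity. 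A remaining subtlety I would flag is that $\mathcal{B}$ may be singular, so the continuity statement must be read for the induced topology on the (possibly singular) strata; the cited estimates are formulated precisely to accommodate this.
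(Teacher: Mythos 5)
Your proposal is correct and follows essentially the same route as the paper: Theorem \ref{thm:YMH-flow-convergence} plus the observation that passing to the graded object of the Harder--Narasimhan--Seshadri double filtration preserves the type gives $\mathcal{B}_\nu = \mathcal{B}_\nu^{Morse}$, and the continuity of the flow-induced retraction is exactly the analytic input the paper attributes to \cite{Wilkin08}. Your write-up merely makes explicit the partition argument and the $\mathcal{G}$-equivariance/continuity checks that the paper, being a survey, leaves implicit.
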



The paper \cite{BiswasWilkin10} extends Theorem \ref{thm:YMH-flow-convergence} to $G$-Higgs bundles whose structure group is any complex reductive group. The remainder of this section shows how to extend this result further to the case where $G$ is a real reductive group as in Section \ref{GHiggs}.  As in Sections \ref{subs:defns} and \ref{subs:modspaces}, we fix a smooth principal $H^\C$ bundle over $X$, denoted $E_{H^\C}$, with topological invariant $d$ and  let $\mathcal{B}_d(G)$ denote the space of Higgs $G$-bundles on $E_{H^\C}$. Let $E_{G^\C} = E_{H^\C} \times_{H^\C} G^\C$ be the bundle associate to $E_{H^\C}$ by the inclusion $H^\C \hookrightarrow G^\C$. This induces an inclusion $i_G : \mathcal{B}_d(G) \hookrightarrow \mathcal{B}_d({G^\C})$.

Given a reduction of structure group of $E_{H^\C}$ from $H^\C$ to $H$, we can define the Yang-Mills-Higgs functional on the space of Higgs $G$-bundles
\begin{equation}\label{eqn:YMH_G}
\YMH_G (\bar{\partial}_A, \varphi) = \| F_A - [\varphi, \tau(\varphi) ] \|^2 .
\end{equation} 

\noi where $\tau$ is the conjugation on $\mathfrak{g}_\C$ that determines the compact Lie algebra $\mathfrak{h} \subset \mathfrak{g}$ (as in \eqref{Higgsequations1}). 

\begin{lemma}\label{lem:real-flow-existence}
The downwards gradient flow of $\YMH_G$ on the space $\mathcal{B}_d(G)$ exists and is unique.
\end{lemma}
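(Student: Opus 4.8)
The plan is to deduce existence and uniqueness from the already-established case of the complexified group $G^\C$, transported along the inclusion $i_G \colon \mathcal{B}_d(G) \hookrightarrow \mathcal{B}_d(G^\C)$. First I would check that $\YMH_G$ is the restriction of $\YMH_{G^\C}$ under $i_G$. Extending the structure group along $H^\C \hookrightarrow G^\C$ carries a pair $(\bar{\partial}_A, \varphi)$ with $\varphi$ valued in $\mathfrak{m}^\C$ to a genuine $G^\C$-Higgs bundle, the constraint $\bar{\partial}_A \varphi = 0$ being preserved; and since one and the same compact conjugation $\tau$ (the one cutting out the compact real form $\mathfrak{u} = \mathfrak{h} \oplus i\mathfrak{m}$ of $\mathfrak{g}^\C$) enters both functionals, the two expressions $\| F_A - [\varphi, \tau(\varphi)] \|^2$ literally agree. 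Taking the reduction $E_U = E_H \times_H U$ of $E_{G^\C}$ induced by the chosen reduction $E_H$ of $E_{H^\C}$, the ambient $L^2$-metric restricts to the $L^2$-metric defining $\YMH_G$, so $i_G$ is an isometric embedding with $\YMH_G = \YMH_{G^\C} \circ i_G$.

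The heart of the matter is to show that the downward gradient flow of $\YMH_{G^\C}$ preserves the image of $i_G$. For this I would use the moment-map description $\YMH_{G^\C} = \| \mu \|^2$ with $\mu = \Lambda(F_A - [\varphi, \tau(\varphi)])$, together with the standard fact (Atiyah--Bott; see also \cite{Wilkin08}) that on a K\"ahler manifold the downward gradient of the norm-square of a moment map is, up to sign, the vector field generated by the complexified element $i\mu$, so that the flow moves within complex gauge orbits. The key structural observation is that along $i_G(\mathcal{B}_d(G))$ the moment map takes values in the subalgebra bundle $E_{H^\C}(\mathfrak{h}^\C)$ rather than in the full $E_{G^\C}(\mathfrak{g}^\C)$: the reduced curvature $F_A$ is $\mathfrak{h}$-valued and $[\varphi, \tau(\varphi)] \in [\mathfrak{m}^\C, \mathfrak{m}^\C] \subseteq \mathfrak{h}^\C$. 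Consequently $i\mu$ lies in $\Omega^0(\Ad(E_{H^\C})) = \Lie(\mathcal{G}^\C)$, and its infinitesimal action $\delta\bar{\partial}_A = \bar{\partial}_A(i\mu)$, $\delta\varphi = -[i\mu, \varphi]$ preserves $\mathcal{B}_d(G)$ because $[\mathfrak{h}^\C, \mathfrak{m}^\C] \subseteq \mathfrak{m}^\C$ keeps $\varphi$ valued in $\mathfrak{m}^\C$. Hence the gradient of $\YMH_{G^\C}$ is everywhere tangent to $i_G(\mathcal{B}_d(G))$.

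Granting tangency, the restricted flow has no normal component, so it coincides with the downward gradient flow of the restricted functional $\YMH_G$ for the induced metric; indeed both are, by the same moment-map picture, the flow generated by $i\mu$ inside $\mathcal{G}^\C$-orbits, so no delicate metric-matching is really needed. Existence and uniqueness then descend from the corresponding statement on $\mathcal{B}_d(G^\C)$ proved in \cite{BiswasWilkin10} (the extension of Theorem \ref{thm:YMH-flow-convergence} to complex reductive groups): the ambient solution exists and, by tangency, stays inside $\mathcal{B}_d(G)$, while uniqueness of the ambient flow forces uniqueness of the restricted one.

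I expect the main obstacle to be making the tangency step fully rigorous on the possibly singular space $\mathcal{B}_d(G)$ and identifying the restricted ambient gradient with the intrinsic gradient of $\YMH_G$. A clean way to package this is to observe that $i_G(\mathcal{B}_d(G))$ is precisely the fixed-point set in $\mathcal{B}_d(G^\C)$ of the isometric involution induced by the Cartan involution $\theta$ of $\mathfrak{g}$ (acting by $\theta$ on the holomorphic structure and by $-\theta$ on $\varphi$), under which $\YMH_{G^\C}$ is manifestly invariant; the gradient flow of an isometry-invariant functional automatically preserves the fixed-point locus, which sidesteps regularity arguments at singular points. Alternatively, to avoid invoking \cite{BiswasWilkin10} as a black box, one can rerun the analysis of \cite{Wilkin08} directly: rewrite the flow as a nonlinear heat equation for a Hermitian metric on $E_{H^\C}$ and apply the short-time parabolic theory together with Moser iteration and Uhlenbeck compactness, the constraint $\varphi \in \Omega^{1,0}(E_{H^\C}(\mathfrak{m}^\C))$ surviving precisely because $[\mathfrak{h}^\C, \mathfrak{m}^\C] \subseteq \mathfrak{m}^\C$.
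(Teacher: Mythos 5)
Your proposal is correct in substance, but it takes a genuinely different route from the paper, essentially running the paper's logic in reverse order. The paper proves Lemma \ref{lem:real-flow-existence} directly and intrinsically: it writes down the Donaldson--Simpson nonlinear heat equation on the space of Hermitian metrics on $E_{H^\C}$ (with the coupling term $[\varphi_0, h^{-1}\tau(\varphi_0)h]$), quotes Simpson's existence and uniqueness for that flow \cite{Simpson88}, and then uses the device of \cite[Section 3.1]{Wilkin08} (see also \cite{Hong01}) to convert the metric flow into a curve $g(t) \in \mathcal{G}_H^\C$ with $(\bar{\partial}_{A(t)},\varphi(t)) = g(t)\cdot(\bar{\partial}_{A_0},\varphi_0)$ generating the gradient flow; since $\mathcal{G}_H^\C$ preserves $\mathcal{B}_d(G)$, existence and uniqueness on $\mathcal{B}_d(G)$ follow. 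This is exactly the argument you relegate to your final ``alternatively'' sentence. Your main route instead imports existence and uniqueness of the ambient flow on $\mathcal{B}_d(G^\C)$ from \cite{BiswasWilkin10} and forces invariance of $i_G(\mathcal{B}_d(G))$ by the Cartan-involution fixed-point argument; the computations this requires (that $\YMH_G$ is the restriction of $\YMH_{G^\C}$ and that the two gradients agree along $\mathcal{B}_d(G)$, both resting on $[\mathfrak{m}^\C,\mathfrak{m}^\C]\subseteq\mathfrak{h}^\C$ and $[\mathfrak{h}^\C,\mathfrak{m}^\C]\subseteq\mathfrak{m}^\C$) are precisely the ones the paper carries out \emph{after} this lemma as input to Theorem \ref{newtheorem}. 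So in effect you prove Theorem \ref{newtheorem} first and deduce the lemma, while the paper proves the lemma first and deduces Theorem \ref{newtheorem} from uniqueness of both flows; both orders are legitimate because \cite{BiswasWilkin10} is external to the paper. What each buys: the paper's construction is self-contained at the level of the real group and directly exhibits the structural fact used later (each flow line lies in a single $\mathcal{G}_H^\C$-orbit), whereas your reduction is shorter given the ambient theorem, and your involution trick --- an isometric involution preserving the functional, plus ambient uniqueness, implies the flow preserves the fixed locus --- is a cleaner way to handle the singular points of $\mathcal{B}_d(G)$ than bare tangency, which, as you rightly note, would not by itself give invariance. Two points you should make explicit: that the Cartan involution genuinely induces a well-defined isometric involution on $\mathcal{B}_d(G^\C)$ with fixed locus exactly $i_G(\mathcal{B}_d(G))$ (using that $\theta$ is trivial on $H^\C$, hence descends to $E_{G^\C}=E_{H^\C}\times_{H^\C}G^\C$, that $\theta$ commutes with $\tau$, and that $B_\tau$ is $\theta$-invariant), and that what you need from \cite{BiswasWilkin10} is uniqueness of the flow with given initial condition, not merely long-time existence and convergence.
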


\begin{proof}
First consider the following equation (cf. \cite[Section 6]{Simpson88} or \cite[pp7-8]{Donaldson85}) on the space of metrics for the bundle $E_{H^\C}$.
\begin{equation*}
\frac{\partial h}{\partial t} = -2ih*\left( F_{A_0} + \bar{\partial}_{A_0}(h^{-1}(\partial_{A_0} h)) + [\varphi_0, h^{-1} \tau(\varphi_0) h]  \right) + 2i \lambda h .
\end{equation*}

Simpson shows in \cite[Section 6]{Simpson88} that the solution exists and is unique. The same method as in \cite[Section 3.1]{Wilkin08} (see also \cite{Hong01}) shows that this implies existence of a curve $g : \R_{\geq 0} \rightarrow \mathcal{G}_H^\C$ that generates the gradient flow of $\YMH_G$ in the sense that a solution $(\bar{\partial}_{A(t)}, \varphi(t))$ satisfies
\begin{equation*}
(\bar{\partial}_{A(t)}, \varphi(t)) = g(t) \cdot (\bar{\partial}_{A_0}, \varphi_0) .
\end{equation*}

Since $\mathcal{G}_H^\C \subset \mathcal{G}_{G^\C}$ preserves the space $\mathcal{B}_d(G)$, then this gives a unique integral curve for the downwards gradient vector field of $\YMH_G$ on the space $\mathcal{B}_d(G)$.
\end{proof}



Next we show that this integral curve coincides with that of the gradient flow of $\YMH_{G^\C}$ with initial conditions in $\mathcal{B}(G)$. Recall that the Cartan involution $\theta$ on $\mathfrak{g}$ is the restriction of the involution $\tau : \mathfrak{g}_\C \rightarrow \mathfrak{g}_\C$ that determines the compact real form $\mathfrak{h} \oplus i \mathfrak{m} \subset \mathfrak{g}_\C$. For the rest of this section we use $\tau$ to denote both the involution on $\mathfrak{g}_\C$ and its restriction to the Cartan involution on $\mathfrak{g}$.


Let $B$ be the Killing form of $\mathfrak{g}$, and define
\begin{equation*}
B_\tau : \mathfrak{g} \rightarrow \mathfrak{g} , \quad B_\tau(u, v) = -B(u, \tau(v)) . 
\end{equation*}
Since $\tau$ is a Cartan involution then $B_\tau$ is positive definite on the real vector space $\mathfrak{g}$, and therefore extends to a Hermitian inner product on $\mathfrak{g}_\C$. Note that the $\ad$-invariance of the Killing form implies that
\begin{multline}\label{eqn:ad-involution}
B_\tau([u,v], w) = - B([u,v], \tau(w)) = - B(u, [v, \tau(w)]) \\
= B_\tau(u, \tau[v, \tau(w)]) = B_\tau(u, [\tau(v), w]) .
\end{multline}



Using this inner product, we can construct the following inner products on the spaces $\Omega^2(\ad(E_G)) \cong \Lie(\mathcal{G})^*)$ and $\Omega^{0,1}(\ad(E_{G^\C})) \oplus \Omega^{1,0}(\ad(E_{G^\C})) \cong T_{(\bar{\partial}_A, \varphi)} T^* \mathcal{A}^{0,1}(E_{G^\C})$. 
\begin{align*}
\left< \mu, \nu \right> & : = \int_X B_\tau (\mu, * \nu) \\
\left< (a_1, \varphi_1), (a_2, \varphi_2) \right> & := \int_X B_\tau(a_1, *a_2) + \int_X B_\tau(\varphi_1, *\varphi_2) , 
\end{align*}
where (in local coordinates) $\mathcal{B}_\tau$ acts on pairs of $1$-forms by
\begin{equation*}
\mathcal{B}_\tau (f_1 dz + g_1 d \bar{z}, f_2 dz + g_2 d \bar{z}) = - B(f_1, \tau(f_2) ) dz d \bar{z} - B(g_1, \tau(g_2)) d \bar{z} dz ,
\end{equation*}
and on a pair consisting of a $2$-form and a $0$-form by
\begin{equation*}
\mathcal{B}_\tau (f dz d\bar{z}, g) = - B(f, \tau(g)) dz d\bar{z} .
\end{equation*}
Note also that for $\varphi_1 = f_1 dz + g_1 d \bar{z}$ and $\varphi_2 = f_2 dz + g_2 d \bar{z}$ we have
\begin{align*}
\left< [\varphi_1, \varphi_2], \mu \right> & = \int_X B_\tau([\varphi_1, \varphi_2], * \mu) \\
 & = - \int_X B([\varphi_1, \varphi_2], \tau(* \mu) ) \\
 & = - \int_X B( \varphi_1, [\varphi_2, \tau(*\mu)]  ) \\
 & = - \int_X B( \varphi_1, \tau[ \tau(\varphi_2), * \mu] ) \\
 & = \int_X B_\tau(\varphi_1, [\tau(\varphi_2), * \mu] ) \\
 & = - \left< \varphi_1, * [\tau(\varphi_2), * \mu] \right> .
\end{align*}

The definitions above, together with the observation that $\tau$ restricts to the Cartan involution on $\mathfrak{g}$, show that the functional $\YMH_{G^\C}$ on $\mathcal{B}_{G^\C}$ restricts to $\YMH_G$ on $\mathcal{B}_G$. 


Using the calculations above, we see that the derivative of $\YMH_G$ at a point $(\bar{\partial}_A, \varphi) \in \mathcal{B}_G$ is given by
\begin{align*}
d \YMH_G (\delta a, \delta \varphi) & = 2 \Re \left< d_A (\delta a) - [\varphi, \tau(\delta \varphi)] - [\delta \varphi, \tau(\varphi)] , F_A - [\varphi, \tau(\varphi)] \right> \\
 & = 2 \Re \left< \delta a, d_A^*(F_A - [\varphi, \tau(\varphi)] \right> + 2\Re \left< \delta \varphi, *[\varphi, *(F_A - [\varphi, \tau(\varphi)]) ] \right> \\
 & \quad \quad + 2\Re \left< \tau(\delta \varphi), \tau(*[\varphi, *(F_A - [\varphi, \tau(\varphi)]) ] ) \right> \\
 & = 4 \Re \left< (\delta a)^{0,1}, - *\bar{\partial}_A *(F_A - [\varphi, \tau(\varphi)])  \right> \\
 & \quad \quad + 4 \Re \left< \delta \varphi, *[\varphi, *(F_A - [\varphi, \tau(\varphi)]) ] \right> ,
\end{align*}
and so the gradient is
\begin{equation*}
\grad \YMH_G = \left( - i\bar{\partial}_A *(F_A - [\varphi, \tau(\varphi)]), -i [\varphi, *(F_A - [\varphi, \tau(\varphi)]) ]  \right) ,
\end{equation*}
which is the same as that of $\YMH_{G^\C}$ at a point $(\bar{\partial}_A, \varphi) \in \mathcal{B}_G$.

Therefore we have proven the following

\begin{lemma}
Let $\YMH_G$ denote the Yang-Mills-Higgs functional on $\mathcal{B}_G$, and let $\YMH_{G^\C}$ denote the Yang-Mills-Higgs functional on $\mathcal{B}_{G^\C}$. Then
\begin{enumerate}
\item $\YMH_G$ is the restriction of $\YMH_{G^\C}$ to $i_G(\mathcal{B}_G) \subset \mathcal{B}_{G^\C}$, and

\item at a $G$-Higgs bundle $(\bar{\partial}_A, \varphi) \in \mathcal{B}_G$, the gradient of $\YMH_G$ is equal to the gradient of $\YMH_{G^\C}$.
\end{enumerate}
\end{lemma}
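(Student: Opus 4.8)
The plan is to deduce both assertions from two structural facts already established: that $\tau$ restricts to the Cartan involution $\theta$ on $\glie = \hlie \oplus \mlie$, and that after complexification one has the bracket relations $[\liehc, \liehc] \subseteq \liehc$, $[\liehc, \mclie] \subseteq \mclie$ and $[\mclie, \mclie] \subseteq \liehc$. Granting these, claim (1) is almost formal, while claim (2) reduces to a tangency check; the only genuinely analytic work is identifying the gradient of $\YMH_G$ by integration by parts, which is recorded in the computation preceding the statement.

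For part (1), I would first note that under $i_G$ a $G$-Higgs bundle $(\bar{\partial}_A, \varphi)$ is sent to the $G^\C$-Higgs bundle with the same Chern connection $A$ and the same $\varphi$, now regarded as a $(1,0)$-form valued in $\glie_\C$ via $\mclie \hookrightarrow \glie_\C$. Since the compact conjugation on $\glie_\C$ restricts on $\mclie$ to exactly the $\tau$ appearing in the definition of $\YMH_G$, the integrand $F_A - [\varphi, \tau(\varphi)]$ is literally the same expression in both functionals. The only point needing verification is that the two $L^2$-norms agree, i.e. that the Hermitian inner product induced by $B_\tau$ on $\glie_\C$ restricts to the inner product used on the $G$-side; this is immediate from $B_\tau(u,v) = -B(u,\tau(v))$ together with the fact that the Killing form and $\tau$ are the ambient ones. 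Hence $\YMH_{G^\C}\circ i_G = \YMH_G$.

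For part (2), I would compute $\grad \YMH_G$ intrinsically. Differentiating $\YMH_G = \|F_A - [\varphi,\tau(\varphi)]\|^2$ in a direction $(\delta a, \delta\varphi)$ and using the adjoint of $d_A$ together with the bracket identity $\langle [\varphi_1,\varphi_2], \mu\rangle = -\langle \varphi_1, *[\tau(\varphi_2), *\mu]\rangle$ recorded above moves all derivatives off the variation; separating into types yields
\begin{equation*}
\grad \YMH_G = \left(-i\bar{\partial}_A *(F_A - [\varphi,\tau(\varphi)]),\ -i[\varphi, *(F_A - [\varphi,\tau(\varphi)])]\right),
\end{equation*}
and the identical computation in $\mathcal{B}_{G^\C}$ gives the same formula. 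It then remains to check that this vector is \emph{tangent} to $i_G(\mathcal{B}_G)$, for only then does the ambient gradient coincide with the intrinsic one rather than merely project onto it. This is where the bracket relations enter: at a point of $\mathcal{B}_G$ both $\varphi$ and $\tau(\varphi)$ lie in $\mclie$, so $[\varphi,\tau(\varphi)] \in \liehc$ and hence $F_A - [\varphi,\tau(\varphi)]$ is $\liehc$-valued; applying $\bar{\partial}_A$ keeps it $\liehc$-valued (as $A$ is an $H^\C$-connection), giving a first component tangent to $\mathcal{A}^{0,1}(E_{H^\C})$, while $[\varphi, *(\cdots)] \in [\mclie, \liehc] \subseteq \mclie$ gives a second component valued in $\mclie$. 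Both components thus lie in the subspace tangent to $i_G(\mathcal{B}_G)$.

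The main obstacle is precisely this tangency verification: a priori the intrinsic gradient is only the orthogonal projection of the ambient one, so the real content of the lemma is that nothing is lost to projection. This hinges on $[\mclie,\mclie]\subseteq\liehc$, which forces the moment-map value $F_A - [\varphi,\tau(\varphi)]$ into $\liehc$, together with $[\mclie,\liehc]\subseteq\mclie$; one must also track the Hodge star and the $(1,0)$/$(0,1)$ decomposition carefully so that each component lands in the correct factor. Finally, tangency to $\mathcal{B}_G$ itself (the locus $\bar{\partial}_A\varphi = 0$) is consistent because, by Lemma~\ref{lem:real-flow-existence}, the flow is generated by complex gauge transformations in $\mathcal{G}_H^\C$, which preserve $\mathcal{B}_d(G)$.
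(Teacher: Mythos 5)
Your proposal is correct and follows essentially the same route as the paper: part (1) from the observation that $\tau$ restricts to the Cartan involution together with the $B_\tau$-inner products, and part (2) from the integration-by-parts computation (using the bracket identity $\langle [\varphi_1,\varphi_2],\mu\rangle = -\langle \varphi_1, *[\tau(\varphi_2),*\mu]\rangle$) yielding $\grad \YMH_G = \left(-i\bar{\partial}_A *(F_A - [\varphi,\tau(\varphi)]),\ -i[\varphi, *(F_A-[\varphi,\tau(\varphi)])]\right)$, which is literally the same expression as $\grad\YMH_{G^\C}$ at a point of $\mathcal{B}_G$. Your explicit tangency verification via $[\mclie,\mclie]\subseteq\liehc$ and $[\liehc,\mclie]\subseteq\mclie$ (so that $F_A - [\varphi,\tau(\varphi)]$ is $\liehc$-valued and both components of the ambient gradient land in the $G$-side tangent space) is the one step the paper leaves implicit, and it is precisely what justifies that the intrinsic gradient equals the ambient one rather than merely its orthogonal projection.
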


Then the gradient flow line of $\YMH_G$ with initial conditions $(\bar{\partial}_{A_0}, \varphi_0) \in \mathcal{B}_G$ from Lemma \ref{lem:real-flow-existence} is an integral curve of the gradient vector field $\YMH_{G^\C}$ with initial conditions $(\bar{\partial}_{A_0}, \varphi_0)$. Since we have uniqueness for the gradient flow of both functionals, then the gradient flow line of $\YMH_G$ must coincide with the gradient flow line of $\YMH_{G^\C}$ on the space $\mathcal{B}_G$. The inclusion $\mathcal{B}_G \hookrightarrow \mathcal{B}_{G^\C}$ is closed, and therefore the limit of the flow of $\YMH_{G^\C}$ is also contained in $\mathcal{B}_G$. In summary, we have shown that
\begin{theorem}\label{newtheorem}
The gradient flow of $\YMH_G$ is the restriction of the gradient flow of $\YMH_{G^\C}$ to the space of $G$-Higgs bundles. Moreover, the gradient flow of $\YMH_G$ converges to a critical point in $\mathcal{B}_G$.
\end{theorem}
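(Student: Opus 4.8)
The plan is to leverage the two lemmas established immediately above, which together reduce the theorem to an assertion about the \emph{interplay} between the two flows and the closedness of the embedding $i_G$. The first assertion of the theorem—that the gradient flow of $\YMH_G$ is the restriction of that of $\YMH_{G^\C}$—is essentially a uniqueness argument. First I would invoke Lemma \ref{lem:real-flow-existence} to produce, for any initial condition $(\bar{\partial}_{A_0}, \varphi_0) \in \mathcal{B}_G$, a unique integral curve of $\grad \YMH_G$ staying in $\mathcal{B}_G$; this curve is generated by a path $g(t)$ in the complex gauge group $\mathcal{G}_H^\C$. Then, using the second lemma—that at every point of $\mathcal{B}_G$ the gradient of $\YMH_G$ equals the gradient of $\YMH_{G^\C}$—I would observe that this same curve is also an integral curve of $\grad \YMH_{G^\C}$ with the same initial condition. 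Since Theorem \ref{thm:YMH-flow-convergence} (extended to complex reductive groups via \cite{BiswasWilkin10}) guarantees long-time existence and uniqueness for the flow of $\YMH_{G^\C}$, the two flow lines must coincide wherever both are defined, proving the first assertion.

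For the second assertion—convergence to a critical point \emph{inside} $\mathcal{B}_G$—I would argue in two steps. By Theorem \ref{thm:YMH-flow-convergence} applied to $G^\C$, the flow line of $\YMH_{G^\C}$ converges in the smooth topology to a critical point of $\YMH_{G^\C}$. Since our flow line of $\YMH_G$ coincides with this $G^\C$-flow line, it converges to the same limit. It remains to verify that this limit lies in the closed subspace $\mathcal{B}_G \subset \mathcal{B}_{G^\C}$: because the inclusion $i_G$ is closed and the entire flow line lies in $\mathcal{B}_G$, the smooth (in particular, topological) limit must lie in $\mathcal{B}_G$ as well. Finally, since a critical point of $\YMH_{G^\C}$ that lies in $\mathcal{B}_G$ is automatically a critical point of the restricted functional $\YMH_G$ (again using that the gradients agree on $\mathcal{B}_G$), the limit is a critical point of $\YMH_G$ in $\mathcal{B}_G$.

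The main obstacle is the \emph{identification of the two gradient flows}, which has already been isolated into the preceding lemma via the explicit computation of $\grad \YMH_G$ and the verification that $B_\tau$ restricts correctly from $\mathfrak{g}_\C$ to $\mathfrak{g}$. The delicate point underlying that computation is that the metric $B_\tau$ used to define the $L^2$ inner product on $\mathcal{B}_{G^\C}$ must restrict, on the $\mathfrak{m}$-valued and $\mathfrak{h}$-valued pieces tangent to $\mathcal{B}_G$, to precisely the inner product used to define $\grad \YMH_G$; this is exactly what the $\tau$-equivariance identity \eqref{eqn:ad-involution} ensures. Given that this has been handled, the remaining work is purely formal: I expect no analytic difficulty in the convergence step, since all the hard analysis (long-time existence, smooth convergence, the Moser-iteration and Uhlenbeck-compactness estimates) has been absorbed into Lemma \ref{lem:real-flow-existence} and Theorem \ref{thm:YMH-flow-convergence}. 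The closedness of $i_G$, which guarantees the limit stays in $\mathcal{B}_G$, follows from the fact that $H^\C \hookrightarrow G^\C$ is a closed embedding of groups and that the defining conditions ($\varphi$ taking values in $\mathfrak{m}^\C$ and $\bar{\partial}_A \varphi = 0$) are closed conditions.
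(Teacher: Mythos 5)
Your proposal is correct and follows essentially the same route as the paper: it combines Lemma \ref{lem:real-flow-existence} with the lemma identifying the two gradients on $\mathcal{B}_G$, uses uniqueness of both flows to conclude that the flow lines coincide, invokes the convergence result for $\YMH_{G^\C}$ (via \cite{BiswasWilkin10}), and uses closedness of $i_G$ to place the limit in $\mathcal{B}_G$. Your closing observation that a critical point of $\YMH_{G^\C}$ lying in $\mathcal{B}_G$ is automatically a critical point of $\YMH_G$ is a small explicit addition that the paper leaves implicit, but the argument is otherwise identical.
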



\subsection{Cohomology calculations}


First, recall the stratifications \eqref{eqn:H-N-strata} and \eqref{eqn:Morse-strata}, which are the same by Corollary \ref{cor:algebraic-equals-analytic}. If we choose a total ordering compatible with the partial ordering on Harder-Narasimhan type, then we can write this as
\begin{equation}
\mathcal{B} = \bigcup_{j = 0}^\infty \mathcal{B}_j ,
\end{equation}
where $\mathcal{B}_0$ is the semistable stratum, and define
\begin{equation*}
X_d = \bigcup_{j=0}^d \mathcal{B}_j .
\end{equation*}
Then, since the total space $\mathcal{B}$ is contractible, the problem of computing the cohomology of $\mathcal{B}_0 = X_0$ reduces to understanding how the cohomology of $X_d$ relates to that of $X_{d-1}$ for all $d$. Therefore, the goal is to compute $H_\mathcal{G}^*(X_d, X_{d-1})$ and understand the map $b^k$ in the long exact sequence
\begin{equation}\label{eqn:stratified-LES}
\cdots \rightarrow H_\mathcal{G}^k(X_d, X_{d-1}) \stackrel{a^k}{\rightarrow} H_\mathcal{G}^k(X_d) \stackrel{b^k}{\rightarrow} H_\mathcal{G}^k(X_{d-1}) \rightarrow \cdots .
\end{equation}

The first step is to show that $H_\mathcal{G}^*(X_d, X_{d-1})$ can be computed in terms of local data in a neighbourhood of the critical set contained in $\mathcal{B}_d = X_d \setminus X_{d-1}$.

In addition, if the map $a^k$ is injective for all $k$ on each stratum $\mathcal{B}_d$, then the long exact sequence \eqref{eqn:stratified-LES} splits into short exact sequences, and (with coefficients in $\C$) we have
\begin{equation*}
H_\mathcal{G}^k(X_d) \cong H_\mathcal{G}^k(X_{d-1}) \oplus H_\mathcal{G}^k(X_d, X_{d-1}) .
\end{equation*}
In the situations studied by Atiyah \& Bott in \cite{AtiyahBott83} and Kirwan in \cite{Kirwan84}, the Atiyah-Bott lemma \cite[Proposition 13.4]{AtiyahBott83} is the main tool in the proof that $a^k$ is injective. When the ambient space is singular then the situation is more complicated; this is explained below.


Recall that in the Morse-Kirwan case, the dimension of the negative eigenspace of the Hessian is constant on each critical set, and the negative eigenspaces glue together to form a bundle over the critical set, which we call the \emph{negative normal bundle}. First we define the analog of the negative normal bundle for Higgs bundles.

Given a smooth splitting $E = E_1 \oplus \cdots \oplus E_n$, where the sub-bundles are ordered by decreasing slope, consider the following sub-bundle of $\End(E)$
\begin{equation*}
\End(E)_{LT} : = \bigoplus_{i < j} E_i^* \otimes E_j .
\end{equation*}

The critical point equations imply that the eigenvalues of $*(F_A + [\varphi,\varphi^*]) \in \Omega^0(\End(E))$ are locally constant and that the bundle $E$ splits into $\varphi$-invariant sub-bundles corresponding to the eigenspaces. Therefore, at each critical point there is a well-defined subspace $\End(E)_{LT} \subset \End(E)$.

Consider the trivial bundle over the critical set $\eta_d$ with fibres the vector spaces $\Omega^{0,1}(\End(E)) \oplus \Omega^{1,0}(\End(E))$, and let $N$ denote the sub-bundle whose fibres are the subspaces $\Omega^{0,1}(\End(E)_{LT}) \oplus \Omega^{1,0}(\End(E)_{LT})$. Note that $N$ is trivial over any subset of critical points that induce the same smooth splitting of $E$. We define the fibres of the negative normal bundle over a critical point $(\bar{\partial}_A, \varphi)$ to consist of those elements in the fibres of $N$ that are (a) orthogonal to the $\mathcal{G}^\C$ orbit passing through $(\bar{\partial}_A, \varphi)$, and (b) contained in the space of Higgs bundles. More precisely, define the infinitesimal action of $\mathcal{G}^\C$ through $(\bar{\partial}_A, \varphi)$ by
\begin{align*}
\rho_{(\bar{\partial}_A, \varphi)}^\C : \Omega^0(\End(E)) & \rightarrow \Omega^{0,1}(\End(E)) \oplus \Omega^{1,0}(\End(E)) \\
u & \mapsto (\bar{\partial}_A u, [\varphi, u]) ,
\end{align*}
and note that since $(\bar{\partial}_A, \varphi)$ is split at a critical point, then $\rho_{(\bar{\partial}_A, \varphi)}^\C$ restricts to a map $\Omega^0(\End(E)_{LT}) \rightarrow \Omega^{0,1}(\End(E)_{LT}) \oplus \Omega^{1,0}(\End(E)_{LT})$. The \emph{negative normal bundle} is
\begin{equation*}
\nu_d^- := \left\{ (\bar{\partial}_A, \varphi, \delta a, \delta \varphi) \in N \, : \, ( \rho_{(\bar{\partial}_A, \varphi)}^\C )^*(\delta a, \delta \varphi) = 0, \bar{\partial}_{(A+a)}(\varphi + \delta \varphi) = 0 \right\} .
\end{equation*}

\begin{remark}
The term ``negative normal bundle'' is intended intuitively rather than literally, since $\nu_d^-$ is the analog of the negative normal bundle for a Morse-Kirwan function on a smooth space. When the ambient space is singular then $\nu_d^-$ as defined above may not be a bundle. As we will see below in Example \ref{ex:index-jumps}, the fibres may not always be vector spaces, and even if the fibres are vector spaces then they may jump in dimension.
\end{remark}

We also define the associated space
\begin{equation*}
\nu_d' := \left\{ (\bar{\partial}_A, \varphi, \delta a, \delta \varphi) \in \nu_d^- \, : \, (\delta a, \delta \varphi) \neq 0 \right\} .
\end{equation*}

The following theorem from \cite{DWWW11}, \cite{wentworthwilkin-pairs} and \cite{wentworthwilkin-u21} shows that $H_\mathcal{G}^*(X_d, X_{d-1})$ can be computed in terms of the local data of the negative normal bundle. The analogous theorem for non-degenerate Morse theory is contained in \cite[Chapter 3]{Milnor63} and \cite[Section 12]{Palais63}, for Morse-Bott theory it was proven by Bott in \cite[p250]{Bott54}, and for Morse-Kirwan functions it was proven by Kirwan in \cite[Chapter 10]{Kirwan84}.

\begin{theorem}
For the cases of rank $2$ Higgs bundles, $U(2,1)$ Higgs bundles and rank $2$ stable pairs, we have
\begin{equation*}
H_\mathcal{G}^*(X_d, X_{d-1}) \cong H_\mathcal{G}^*(\nu_d^-, \nu_d') .
\end{equation*}
\end{theorem}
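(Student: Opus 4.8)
The plan is to run the standard equivariant Morse-theoretic localization argument, replacing the global pair $(X_d, X_{d-1})$ by local data around the critical set $\mathcal{C}_d \subset \mathcal{B}_d$, and then to identify this local data with the pair $(\nu_d^-, \nu_d')$. The essential input is the gradient flow: by Theorem \ref{thm:YMH-flow-convergence} and Corollary \ref{cor:algebraic-equals-analytic} the flow of $\YMH$ gives a continuous $\mathcal{G}$-equivariant deformation retraction of $\mathcal{B}_d = X_d \setminus X_{d-1}$ onto the critical set $\mathcal{C}_d$, and the equivariant form of condition C (recovered by restricting to complex gauge orbits) supplies the compactness needed to make the usual ``passing a critical value'' argument go through in the infinite-dimensional setting. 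First I would use this retraction together with excision to show that $H_\mathcal{G}^*(X_d, X_{d-1})$ is isomorphic to $H_\mathcal{G}^*(U, U \setminus \mathcal{C}_d)$ for a suitable $\mathcal{G}$-invariant neighbourhood $U$ of $\mathcal{C}_d$; this is exactly the reduction to local data near $\mathcal{C}_d$ flagged in the text.

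Second, I would build a local model for $\mathcal{B}$ near $\mathcal{C}_d$. At a critical point $(\bar{\partial}_A, \varphi)$ the bundle $E$ splits holomorphically and $\varphi$-invariantly into the eigenbundles of $*(F_A + [\varphi, \varphi^*])$, giving the splitting $E = E_1 \oplus \cdots \oplus E_n$ used to define $\End(E)_{LT}$. A Coulomb/Kuranishi slice for the $\mathcal{G}^\C$-action identifies a neighbourhood of the orbit through $(\bar{\partial}_A, \varphi)$ with the zero set of the integrability map $(\delta a, \delta \varphi) \mapsto \bar{\partial}_{A + a}(\varphi + \delta \varphi)$ restricted to directions orthogonal to the orbit, and the negative eigenspace of the Hessian along this slice is precisely $\Omega^{0,1}(\End(E)_{LT}) \oplus \Omega^{1,0}(\End(E)_{LT})$ (consistent with the Morse index equalling the codimension of the Harder--Narasimhan stratum). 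Under this identification the Morse stratum transverse to $\mathcal{C}_d$ is cut out exactly by conditions (a) and (b) defining $\nu_d^-$, so the local pair $(U, U \setminus \mathcal{C}_d)$ should be $\mathcal{G}$-equivariantly homeomorphic to $(\nu_d^-, \nu_d')$, with $\nu_d'$ corresponding to the deleted critical set. Combining this with the first step would give the claimed isomorphism.

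The hard part is that $\mathcal{B}$ is singular, so $\nu_d^-$ need not be an honest vector bundle: as the preceding remark and Example \ref{ex:index-jumps} indicate, the integrability constraint $\bar{\partial}_{A + a}(\varphi + \delta\varphi) = 0$ is quadratic, so its solution set inside the negative directions may fail to be a linear subspace and its dimension may jump along $\mathcal{C}_d$. Consequently neither the Thom isomorphism nor the Atiyah--Bott lemma \cite[Proposition 13.4]{AtiyahBott83} --- the tool that forces injectivity of $a^k$ in the smooth Morse--Kirwan setting --- applies directly. The resolution, and the reason the statement is confined to rank $2$ Higgs bundles, $U(2,1)$-Higgs bundles and rank $2$ stable pairs, is that in each of these low-rank families the block structure of $\End(E)_{LT}$ and the linearization of the integrability map are simple enough to analyze explicitly: one stratifies $\mathcal{C}_d$ by the loci where the fibre dimension jumps and verifies the local homeomorphism with $(\nu_d^-, \nu_d')$ by hand on each piece. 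Carrying out this fibrewise analysis, and checking that the resulting pair still computes $H_\mathcal{G}^*(X_d, X_{d-1})$ despite the non-constant and possibly nonlinear fibres, is the main technical obstacle.
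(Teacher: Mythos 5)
Your proposal is correct in outline and follows essentially the approach the paper takes: the paper itself gives no proof of this theorem (it is quoted from \cite{DWWW11}, \cite{wentworthwilkin-pairs} and \cite{wentworthwilkin-u21}), but the surrounding text lays out exactly your route --- reduction of $H_\mathcal{G}^*(X_d, X_{d-1})$ to local data near the critical set using the flow retraction of Theorem \ref{thm:YMH-flow-convergence} and Corollary \ref{cor:algebraic-equals-analytic}, identification of that local data with the pair $(\nu_d^-, \nu_d')$, and case-by-case low-rank analysis to cope with the singular, dimension-jumping fibres illustrated in Example \ref{ex:index-jumps}. You also correctly identify why the statement is restricted to rank $2$ Higgs bundles, $U(2,1)$-Higgs bundles and rank $2$ stable pairs: the quadratic integrability constraint prevents $\nu_d^-$ from being an honest bundle, so the Thom isomorphism and the Atiyah--Bott lemma cannot be applied directly, and the identification must be verified by hand in each case.
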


As noted earlier, even if the fibres of $\nu_d^-$ are vector spaces, their dimension may be non-constant on each connected component of the critical set of $\YMH$, and so the problem of computing $H_\mathcal{G}^*(\nu_d^-, \nu_d')$ is more complicated than the methods of Atiyah \& Bott and Kirwan from \cite{AtiyahBott83} and \cite{Kirwan84}. The following example illustrates how the fibres may jump in dimension.

\begin{example}\label{ex:index-jumps}
Let $E$ be a rank $2$ bundle of degree $d$, and consider the critical points of $\YMH$ where $E$ splits into line bundles $L_1 \oplus L_2$, with $\deg L_1 = \ell$. This forms a connected component of the set of critical points, denoted $\eta_\ell$. Modulo the action of the unitary gauge group we can fix a smooth splitting $E = L_1 \oplus L_2$ (cf. \cite[Section 7]{AtiyahBott83} and \cite[Corollary 4.17]{Wilkin08}), and with respect to this splitting $\End(E)_{LT} = L_1^* L_2$. Then, for $(\delta a, \delta \varphi) \in \Omega^{0,1}(\End(E)_{LT}) \oplus \Omega^{1,0}(\End(E)_{LT})$, the conditions $(\delta a, \delta \varphi) \in \ker (\rho_{(\bar{\partial}_A, \varphi)}^\C)^*$ and $\bar{\partial}_{A+a}(\varphi+\delta \varphi) = 0$ become
\begin{equation*}
\bar{\partial}_A^* (\delta a) - \bar{*}[\varphi, \bar{*}(\delta \varphi)] = 0, \quad \bar{\partial}_A (\delta \varphi) + [\delta a, \varphi] = 0 .
\end{equation*}
Note that $[\delta a, \delta \varphi] = 0$, since $\End(E)_{LT} = L_1^* L_2$. Therefore the above equations are linear in $(\delta a, \delta \varphi)$, and so the space of solutions is a vector space.

When $\varphi = 0$, the equations reduce to
\begin{equation*}
\bar{\partial}_A^* (\delta a) = 0, \quad \bar{\partial}_A (\delta \varphi) = 0 ,
\end{equation*}
and so the space of solutions is isomorphic to $H^{0,1}(L_1^* L_2) \oplus H^{1,0}(L_1^* L_2)$. By Riemann-Roch, the dimension is
\begin{equation*}
2 \ell - \deg (E) + g-1 + h^0(L_1^* L_2 \otimes K) .
\end{equation*}
When $0 \leq \deg(L_1^* L_2 \otimes K) \leq 2g-2$ then $h^0(L_1^* L_2 \otimes K)$ is not constant  with respect to the holomorphic structure on $L_1^* L_2 \otimes K$. Therefore the dimension of the fibres of $\nu_d^-$ is non-constant.

When the Higgs field is non-zero, there is an analogous picture obtained by replacing the cohomology groups $H^{0,1}(L_1^* L_2) \oplus H^{1,0}(L_1^* L_2)$ with the harmonic forms associated to the middle term in the following deformation complex
\begin{equation*}
\Omega^0(L_1^*L_2) \stackrel{\rho_{(\bar{\partial}_A, \varphi)}^\C}{\longrightarrow} \Omega^{0,1}(L_1^* L_2) \oplus \Omega^{1,0}(L_1^* L_2) \stackrel{d \mu_\C}{\longrightarrow} \Omega^{1,1}(L_1^* L_2) .
\end{equation*}
\end{example}

As noted above, for rank $2$ we always have $[\delta a, \delta \varphi] = 0$ for $(\delta a, \delta \varphi) \in \Omega^{0,1}(\End(E)_{LT}) \oplus \Omega^{1,0}(\End(E)_{LT})$, and so it is sufficient to study the linearised equations. For higher rank, one needs to replace the linearisation of the complex moment map $d \mu_\C$ with the map $(\delta a, \delta \varphi) \mapsto \bar{\partial}_{A+\delta a} (\varphi + \delta \varphi)$. In the more complicated cases studied in \cite{wentworthwilkin-pairs} and \cite{wentworthwilkin-u21} this becomes a consideration.



Nevertheless, it is still possible to compute $H_\mathcal{G}^*(\nu_d^-, \nu_d')$ in this case (and for other low-rank cases). The observation that allows us to do this is that the jumps in dimension described above are related to the jumps in dimension of the fibres of the Abel-Jacobi map $S^d X \rightarrow \Jac_d(X)$. 

The strategy of the calculation from \cite{DWWW11} is as follows. First, find a $\mathcal{G}$-equivariant deformation retract of the pair $(\nu_d^-, \nu_d')$ to the subspace with Higgs field zero. Next, introduce the space
\begin{equation*}
\nu_d'' = \left\{ (\bar{\partial}_A, \varphi, \delta a, \delta \varphi) \in \nu_d^- \, : \, \varphi = 0, \delta a \neq 0 \right\} \subseteq \nu_d' .
\end{equation*}

In \cite{DWWW11}, we compute
\begin{align*}
H_\mathcal{G}^*(\nu_d^-, \nu_d'') & \cong H^{*-2\ell +\deg (E) -(g-1)}(\Jac(X) \times \Jac(X)) \otimes H^*(BU(1))^{\otimes 2} \\
H_\mathcal{G}^*(\nu_d', \nu_d'') & \cong H^{*-2\ell +\deg (E) -(g-1)}(S^{2g-2+\deg(E) - 2 \ell_1} X \times \Jac(X)) \otimes H^*(BU(1)).
\end{align*}
Without repeating the details of these calculations, it is still worth noting that both isomorphisms follow from reducing to the Thom isomorphism. In the first case this is straightforward, and in the second case this follows by reducing to a vector bundle over a subset of the negative normal directions.

These computations tell us about the long exact sequence \eqref{eqn:stratified-LES} via the following commutative diagram.
\begin{equation}\label{eqn:rank2-diagram}
\xymatrix{
 & \vdots \ar[d] \\
\cdots \ar[r] & H_\mathcal{G}^k(X_d, X_{d-1}) \ar[r] \ar[d]^\cong & H_\mathcal{G}^k(X_d) \ar[r] \ar[d] & H_\mathcal{G}^k(X_{d-1}) \ar[r] \ar[d] & \cdots \\
 \cdots \ar[r] & H_\mathcal{G}^k(\nu_d, \nu_d') \ar[r] \ar[d] & H_\mathcal{G}^k(\nu_d) \ar[r] & H_\mathcal{G}^k(\nu_d') \ar[r] & \cdots \\
  & H_\mathcal{G}^k(\nu_d, \nu_d'') \ar[ur]^{\xi^k} \ar[d]^{\gamma^k} \\
  & H_\mathcal{G}^k(\nu_d', \nu_d'') \ar[d] \\
  & \vdots 
}
\end{equation}

\begin{lemma}
For non-fixed determinant rank $2$ Higgs bundles, the map $\gamma^k$ is surjective for all $k$, and so the vertical long exact sequence splits into short exact sequences.
\end{lemma}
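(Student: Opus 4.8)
The plan is to treat the two displayed isomorphisms for $H_\mathcal{G}^*(\nu_d^-,\nu_d'')$ and $H_\mathcal{G}^*(\nu_d',\nu_d'')$ as given, identify $\gamma^k$ with a concrete pullback between the geometric models, and then verify surjectivity on a generating set. Note first that $\gamma^k$ is the map in the long exact sequence of the triple $\nu_d''\subseteq\nu_d'\subseteq\nu_d^-$ induced by the inclusion of pairs $(\nu_d',\nu_d'')\hookrightarrow(\nu_d^-,\nu_d'')$, so its surjectivity for all $k$ is equivalent to the vanishing of the connecting map $H_\mathcal{G}^k(\nu_d',\nu_d'')\to H_\mathcal{G}^{k+1}(\nu_d^-,\nu_d')$, which is exactly what splits the vertical sequence of \eqref{eqn:rank2-diagram} into short exact sequences. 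Thus it suffices to prove that the restriction map $H_\mathcal{G}^*(\nu_d^-,\nu_d'')\to H_\mathcal{G}^*(\nu_d',\nu_d'')$ is onto.

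First I would unwind the two models geometrically. After the $\mathcal{G}$-equivariant retraction to $\varphi=0$, the fibre of $\nu_d^-$ over the critical set $\eta_\ell$ splits as $H^{0,1}(L_1^*L_2)\oplus H^{1,0}(L_1^*L_2)$, with $\delta a$ in the first summand and $\delta\varphi$ in the second, and the $\mathcal{G}$-equivariant cohomology of $\eta_\ell$ is $H^*(\Jac(X)\times\Jac(X))\otimes H^*(BU(1))^{\otimes 2}$, the two $H^*(BU(1))$ factors coming from the residual $\U(1)\times\U(1)$ stabiliser of a split bundle $L_1\oplus L_2$. The locus $\nu_d''$ is $\delta a\neq 0$, so $(\nu_d^-,\nu_d'')$ is the fibrewise Thom space of the genuine vector bundle $V=H^{0,1}(L_1^*L_2)$, giving the first isomorphism by a Thom isomorphism. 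For $(\nu_d',\nu_d'')$ the relevant locus is $\delta a=0,\ \delta\varphi\neq 0$: projectivising the $\delta\varphi$-direction and sending a nonzero section of $L_1^*L_2\otimes K$ to its zero divisor realises it as $S^{m}X\times\Jac(X)$ with $m=\deg(L_1^*L_2\otimes K)$, while the residual structure group collapses to the diagonal $\U(1)$, leaving a single $H^*(BU(1))$.

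Next I would identify $\gamma^k$ with the pullback along $(D,L_2)\mapsto\bigl(L_2\otimes K\otimes\mathcal{O}(-D),\,L_2\bigr)$ from $S^mX\times\Jac(X)$ to $\Jac(X)\times\Jac(X)$, tensored with the restriction map $H^*(BU(1))^{\otimes 2}\to H^*(BU(1))$ induced by the diagonal $\U(1)\hookrightarrow\U(1)\times\U(1)$. This factors through the Abel-Jacobi map $\mathrm{aj}\colon S^mX\to\Jac_m(X)$. To prove surjectivity I would exhibit a generating set of $H^*(S^mX\times\Jac(X))\otimes H^*(BU(1))$ in the image: the free $\Jac$-factor classes and the subring $\mathrm{aj}^*H^*(\Jac_m X)$ are hit by pullback from the two Jacobian factors of the source; the polynomial generator of the remaining $H^*(BU(1))$ is the image of the diagonal combination $u_1+u_2$; and, crucially, the degree-two class $x\in H^2(S^mX)$ that (by Macdonald's classical description of the cohomology of symmetric products) generates $H^*(S^mX)$ as a module over $\mathrm{aj}^*H^*(\Jac_m X)$ is the image, up to sign, of the anti-diagonal combination $u_1-u_2$, since $x$ is the equivariant first Chern class of the tautological line along the projectivised $\delta\varphi$-direction (on which $\U(1)\times\U(1)$ acts with weight $(-1,+1)$).

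The main obstacle is precisely this last identification in the regime $0\le m\le 2g-2$, where $\mathrm{aj}$ is not a projective bundle and the fibre dimensions jump, exactly the phenomenon of Example \ref{ex:index-jumps}. One must show that, despite the jumping, $H^*(S^mX)$ is still generated over $\mathrm{aj}^*H^*(\Jac_m X)$ by the single tautological class $x$, and that $x$ genuinely lies in the image of the equivariant parameters $u_1,u_2$; this is where the non-fixed determinant hypothesis enters, as it ensures that both independent classes $u_1,u_2$ survive. For fixed determinant the constraint $L_1\otimes L_2=\det E$ would collapse one Jacobian factor and one $\U(1)$, removing the anti-diagonal class $u_1-u_2$ needed to hit $x$. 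Once $x$ is accounted for, Macdonald's generation statement shows the listed elements generate the target, and $\gamma^k$ is surjective in every degree.
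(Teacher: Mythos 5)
Your argument is correct, and it is essentially the proof from \cite{DWWW11} --- which is all the paper itself offers, since the survey states this lemma without proof and defers to that reference: reduce both pairs by Thom isomorphisms to the equivariant cohomology of the loci $\{\varphi=0,\ \delta a=0\}$ and $\{\varphi=0,\ \delta a=0,\ \delta\varphi\neq 0\}$, identify $\gamma^k$ with the induced restriction map, and check surjectivity on generators using Macdonald's theorem that $H^*(S^mX)$ is generated over $\mathrm{aj}^*H^*(\Jac(X))$ by the degree-two class $\eta$, which is hit by the anti-diagonal equivariant parameter modulo classes already known to be in the image.

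Two points should be repaired, although neither invalidates your proof of the lemma as stated. (i) Your second paragraph describes $\gamma^k$ as a geometric pullback tensored with the diagonal restriction $H^*(B\U(1))^{\otimes 2}\to H^*(B\U(1))$; taken literally this sends $u_1-u_2$ to $0$, contradicting your third paragraph, where $u_1-u_2$ must hit $\eta$. The correct statement --- and the one your third paragraph actually uses --- is that $\gamma$ is a ring homomorphism with $u_i\mapsto u\pm c_1(T)$, where $T$ is the tautological line along the $\delta\varphi$-direction; this twist is exactly why $u_1-u_2\mapsto \pm 2\,c_1(T)\equiv \mp 2\eta$ modulo $\mathrm{aj}^*H^2(\Jac(X))$ (the factor of $2$, coming from the weight $(-1,+1)$, is harmless with the $\C$ coefficients used in the paper). (ii) Your account of why the fixed-determinant case fails is wrong. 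With fixed determinant the residual stabiliser of $L_1\oplus L_2$ is $\{(t,t^{-1})\}$: the determinant-one condition kills the \emph{diagonal} $\U(1)$, not the anti-diagonal one, so the parameter that hits the tautological class is precisely the one that survives. What breaks surjectivity there is a covering phenomenon: with $L_1\otimes L_2$ fixed, a divisor $D$ determines $L_1$ only up to a square root, $L_1^2=\Lambda\otimes K\otimes\mathcal{O}(-D)$, so the locus $\{\delta a=0,\ \delta\varphi\neq 0\}$ modulo gauge is a $2^{2g}$-sheeted cover of $S^mX$ with deck group $\Gamma_2=H^1(X,\Z/2\Z)$, and the image of $\gamma^k$ is contained in the $\Gamma_2$-invariant part of its cohomology. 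This, not the disappearance of $u_1-u_2$, is the mechanism behind the paper's subsequent $\Gamma_2$ discussion and Theorem 4.13 of \cite{DWWW11}. In the non-fixed-determinant case your proof stands because $L_1=L_2\otimes K\otimes\mathcal{O}(-D)$ is determined uniquely, so no cover appears and the target really is $H^*(S^mX\times\Jac(X))\otimes H^*(B\U(1))$.
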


The Atiyah-Bott lemma shows that the diagonal map $\xi^k$ is injective, and so a diagram chase leads to
\begin{corollary}\label{cor:calculations}
\begin{enumerate}
\item The map $b^k : H_\mathcal{G}^k(X_d) \rightarrow H_\mathcal{G}^k(X_{d-1})$ is surjective, and

\item $P_t^\mathcal{G}(X_d) - P_t^\mathcal{G}(X_{d-1}) = P_t^\mathcal{G}(\nu_d, \nu_d'') - P_t^\mathcal{G}(\nu_d', \nu_d'')$ .
\end{enumerate}
\end{corollary}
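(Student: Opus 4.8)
The plan is to run the diagram chase encoded in \eqref{eqn:rank2-diagram}, taking as given the two inputs established just above: that $\xi^k$ is injective (by the Atiyah-Bott lemma) and that $\gamma^k$ is surjective (by the preceding lemma), so that the vertical triple sequence $\cdots \to H_\mathcal{G}^k(\nu_d, \nu_d') \xrightarrow{p^k} H_\mathcal{G}^k(\nu_d, \nu_d'') \xrightarrow{\gamma^k} H_\mathcal{G}^k(\nu_d', \nu_d'') \to \cdots$ splits into short exact sequences. Concretely, surjectivity of $\gamma^k$ for all $k$ forces the connecting maps $H_\mathcal{G}^k(\nu_d', \nu_d'') \to H_\mathcal{G}^{k+1}(\nu_d, \nu_d')$ to vanish, so $p^k$ is injective for every $k$.

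First I would reduce part (1) to the injectivity of $a^k$. In the long exact sequence \eqref{eqn:stratified-LES}, the map $b^k$ is surjective precisely when the connecting homomorphism $H_\mathcal{G}^k(X_{d-1}) \to H_\mathcal{G}^{k+1}(X_d, X_{d-1})$ vanishes, which by exactness is equivalent to the injectivity of $a^{k+1}$. Hence it suffices to prove that $a^k$ is injective for all $k$. Next I would identify $a^k$ with a composite down the left column: under the isomorphism $H_\mathcal{G}^k(X_d, X_{d-1}) \cong H_\mathcal{G}^k(\nu_d, \nu_d')$ of the earlier theorem, $a^k$ corresponds to the forgetful map $H_\mathcal{G}^k(\nu_d, \nu_d') \to H_\mathcal{G}^k(\nu_d)$ of the second row. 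Because $\nu_d'' \subseteq \nu_d' \subseteq \nu_d$, functoriality of relative cohomology applied to the maps of pairs $(\nu_d, \emptyset) \to (\nu_d, \nu_d'') \to (\nu_d, \nu_d')$ factors this map as
\begin{equation*}
H_\mathcal{G}^k(\nu_d, \nu_d') \xrightarrow{p^k} H_\mathcal{G}^k(\nu_d, \nu_d'') \xrightarrow{\xi^k} H_\mathcal{G}^k(\nu_d),
\end{equation*}
which is exactly the diagonal path of \eqref{eqn:rank2-diagram}. Since $p^k$ is injective (split triple sequence) and $\xi^k$ is injective (Atiyah-Bott lemma), the composite is injective, so $a^k$ is injective and part (1) follows.

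For part (2), the injectivity of $a^k$ together with $\C$-coefficients (working over a field) forces \eqref{eqn:stratified-LES} to split into short exact sequences $0 \to H_\mathcal{G}^k(X_d, X_{d-1}) \to H_\mathcal{G}^k(X_d) \to H_\mathcal{G}^k(X_{d-1}) \to 0$, whence
\begin{equation*}
P_t^\mathcal{G}(X_d) - P_t^\mathcal{G}(X_{d-1}) = P_t^\mathcal{G}(X_d, X_{d-1}) = P_t^\mathcal{G}(\nu_d, \nu_d'),
\end{equation*}
the last equality again using the theorem's isomorphism. Finally, the split short exact sequence $0 \to H_\mathcal{G}^k(\nu_d, \nu_d') \to H_\mathcal{G}^k(\nu_d, \nu_d'') \to H_\mathcal{G}^k(\nu_d', \nu_d'') \to 0$ gives $P_t^\mathcal{G}(\nu_d, \nu_d') = P_t^\mathcal{G}(\nu_d, \nu_d'') - P_t^\mathcal{G}(\nu_d', \nu_d'')$, which combines with the previous display to produce the asserted identity.

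The only genuine content beyond this formal bookkeeping is the correct identification of $a^k$ with the composite $\xi^k \circ p^k$, so the main obstacle I anticipate is checking the commutativity of \eqref{eqn:rank2-diagram} carefully enough to be sure that the two arrows being composed really are the inclusion-induced map $p^k$ and the forgetful map $\xi^k$ (with the correct directions and no sign error in the connecting maps), rather than the routine algebra that follows. Everything else is a direct consequence of the two given injectivity/surjectivity statements and the exactness of the three sequences involved.
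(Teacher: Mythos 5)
Your proof is correct and takes essentially the same route as the paper: the paper's entire argument is that injectivity of $\xi^k$ (Atiyah--Bott lemma) plus surjectivity of $\gamma^k$ (the preceding lemma) ``and a diagram chase'' of \eqref{eqn:rank2-diagram} yield the corollary, and your chase---factoring the map $H_\mathcal{G}^k(\nu_d, \nu_d') \to H_\mathcal{G}^k(\nu_d)$ as $\xi^k \circ p^k$, with $p^k$ injective because the split triple sequence kills the connecting maps, and then transferring injectivity to $a^k$ through the isomorphism $H_\mathcal{G}^k(X_d, X_{d-1}) \cong H_\mathcal{G}^k(\nu_d, \nu_d')$---is precisely the intended one. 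The bookkeeping for part (2) via the resulting short exact sequences matches as well.
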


The calculations for other examples such as stable pairs and $U(2,1)$ Higgs bundles follow a similar strategy: the idea is to further decompose the negative normal bundle and then compute the cohomology of the pair $(X_d, X_{d-1})$ in terms of on the smaller pieces in the decomposition. See \cite{wentworthwilkin-pairs} and \cite{wentworthwilkin-u21} for more details.


In the fixed determinant case the situation is not quite as simple. The analogous map $\gamma^k$ is not always surjective, and so the vertical long exact sequence in \eqref{eqn:rank2-diagram} does not split. As a consequence, a diagram chase of \eqref{eqn:rank2-diagram} only gives us
\begin{equation*}
P_t^\mathcal{G}(\nu_d) - P_t^\mathcal{G}(\nu_d') = P_t^\mathcal{G}(\nu_d, \nu_d'') - P_t^\mathcal{G}(\nu_d', \nu_d'') ,
\end{equation*}
however we can still recover the second part of Corollary \ref{cor:calculations} in two different ways. 


The first is to avoid using the negative normal bundle in the diagram \eqref{eqn:rank2-diagram}, and instead do all of the calculations in terms of the spaces $X_d$. This is more in the spirit of the original Atiyah and Bott approach. The intermediate space is denoted $X_d''$ (see \cite[Section 3.1]{DWWW11} for the precise definition), and there are isomorphisms 
\begin{equation*}
H_\mathcal{G}^*(X_d, X_d'') \cong H_\mathcal{G}^*(\nu_d, \nu_d''), \quad H_\mathcal{G}^*(X_{d-1}, X_d'') \cong H_\mathcal{G}^*(\nu_d', \nu_d'') .
\end{equation*}
The diagram becomes
\begin{equation}\label{eqn:rank2-stratified-diagram}
\xymatrix{
 & \vdots \ar[d] \\
\cdots \ar[r] & H_\mathcal{G}^k(X_d, X_{d-1}) \ar[r] \ar[d]^\cong & H_\mathcal{G}^k(X_d) \ar[r] & H_\mathcal{G}^k(X_{d-1}) \ar[r] & \cdots \\
  & H_\mathcal{G}^k(X_d, X_d'') \ar[ur]^{\xi^k} \ar[d]^{\gamma^k} \\
  & H_\mathcal{G}^k(X_{d-1}, X_d'') \ar[d] \\
  & \vdots 
}
\end{equation}

Again, the diagonal map $\xi^k$ is injective. An analogous diagram chase to the one above gives us $P_t^\mathcal{G}(X_d) - P_t^\mathcal{G}(X_{d-1}) = P_t^\mathcal{G}(X_d, X_d'') - P_t^\mathcal{G}(X_{d-1}, X_d'')$, even though the vertical long exact sequence in \eqref{eqn:rank2-stratified-diagram} does not split.


The second approach is to use the action of the finite group $\Gamma_2 := H^1(X, \mathbb{Z} / 2\mathbb{Z})$ corresponding to the $2$-torsion points on the Jacobian, which was originally defined by Hitchin in \cite[Section 7]{Hitchin87}. This acts on the cohomology groups in the diagram \eqref{eqn:rank2-diagram}, and the representation splits into pieces according to whether the action is trivial or not. In \cite[Section 4.2]{DWWW11}, we show that the maps in the diagram \eqref{eqn:rank2-diagram} respect this splitting, and that we can recover the result
\begin{equation*}
P_t^\mathcal{G}(X_d) - P_t^\mathcal{G}(X_{d-1}) = P_t^\mathcal{G}(\nu_d, \nu_d'') - P_t^\mathcal{G}(\nu_d', \nu_d'') .
\end{equation*}
As an additional consequence of this method, we also prove 
\begin{theorem}[Theorem 4.13 of \cite{DWWW11}]
The $\Gamma_2$-invariant Kirwan map surjects onto the $\Gamma_2$-invariant part of the cohomology of $\mathcal{B}_0^{ss}(2, d)$ for any $d$.
\end{theorem}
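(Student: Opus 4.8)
The plan is to reduce the statement to the splitting of the stratified long exact sequence \eqref{eqn:stratified-LES} on the $\Gamma_2$-invariant part, and to obtain that splitting by running the diagram chase of \eqref{eqn:rank2-diagram} one isotypic component at a time. Since $\mathcal{B}$ is contractible, $H_\mathcal{G}^*(\mathcal{B}) \cong H^*(B\mathcal{G})$ and the Kirwan map to $H_\mathcal{G}^*(\mathcal{B}_0^{ss}(2,d))$ is the composite of the restrictions $H_\mathcal{G}^*(X_d) \to H_\mathcal{G}^*(X_{d-1})$ along the filtration $X_0 \subset X_1 \subset \cdots$. Hence, for any group acting compatibly on the entire filtration, the Kirwan map is surjective on a fixed isotypic summand precisely when every $b^k$ is surjective on that summand, i.e. when \eqref{eqn:stratified-LES} splits there for all $d$.

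Next I would install the action of $\Gamma_2 = H^1(X,\Z/2\Z)$, following Hitchin \cite[Section 7]{Hitchin87}: tensoring a rank $2$ bundle by a $2$-torsion line bundle preserves the fixed determinant, the Higgs-field data, semistability, and the Harder-Narasimhan type, and it commutes with the gauge group. Thus $\Gamma_2$ acts on every $X_d$, on the critical sets, and on the negative normal data $\nu_d^-, \nu_d', \nu_d''$, hence on every term and every arrow of \eqref{eqn:stratified-LES} and \eqref{eqn:rank2-diagram}. Because $\Gamma_2$ is finite and we take $\C$ coefficients, Maschke's theorem splits each group into isotypic summands and every map respects this splitting; I then restrict the entire picture to the trivial (invariant) summand.

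The crux is to show that the invariant summand of \eqref{eqn:rank2-diagram} behaves exactly as the non-fixed-determinant diagram does in Corollary \ref{cor:calculations}: that $\gamma^k$ is surjective on invariants, so that the vertical sequence splits there. This is where the fixed-determinant obstruction disappears, since the classes responsible for the failure of $\gamma^k$ to be surjective in general arise from the jumping fibres of the Abel-Jacobi map $S^m X \to \Jac(X)$ entering $\nu_d'$, and one checks that these carry nontrivial $\Gamma_2$-characters and so are absent from the invariant summand. Granting this, the injectivity of the diagonal map $\xi^k$ (the Atiyah-Bott lemma) together with the same diagram chase that proves Corollary \ref{cor:calculations}(1) shows that $b^k$ is surjective on invariants for all $d$ and $k$; equivalently, the invariant part of \eqref{eqn:stratified-LES} splits and the invariant Poincar\'e polynomials satisfy $P_t^\mathcal{G}(X_d)^{\Gamma_2} - P_t^\mathcal{G}(X_{d-1})^{\Gamma_2} = P_t^\mathcal{G}(\nu_d^-, \nu_d'')^{\Gamma_2} - P_t^\mathcal{G}(\nu_d', \nu_d'')^{\Gamma_2}$. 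By the reduction of the first paragraph, this is exactly the surjectivity of the $\Gamma_2$-invariant Kirwan map.

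The main obstacle is precisely this localization of the obstruction: one must control the $\Gamma_2$-representation on $H_\mathcal{G}^*(\nu_d', \nu_d'')$ and $H_\mathcal{G}^*(\nu_d^-, \nu_d'')$ and verify that the part obstructing the split sits off the trivial character. This rests in turn on understanding how the $2$-torsion acts, through the Abel-Jacobi map, on the cohomology of the symmetric products $S^m X$, which is the technical core carried out in \cite[Section 4.2]{DWWW11}.
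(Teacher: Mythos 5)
Your proposal is correct and follows essentially the same route as the paper: the paper's treatment of this theorem is exactly its ``second approach'' to the fixed-determinant case — let $\Gamma_2$ act compatibly on every term and arrow of diagram \eqref{eqn:rank2-diagram}, use equivariance (and $\C$ coefficients) to split into isotypic summands, observe that on the invariant summand the map $\gamma^k$ regains the surjectivity it has in the non-fixed-determinant setting because the obstruction lives in nontrivial isotypic pieces, and then run the same $\xi^k$-injectivity (Atiyah--Bott lemma) diagram chase as in Corollary \ref{cor:calculations} to get surjectivity of $b^k$, hence of the Kirwan map, on invariants. Your identification of the technical core — controlling the $\Gamma_2$-representation on the cohomology of the pairs $(\nu_d^-,\nu_d'')$ and $(\nu_d',\nu_d'')$ via the symmetric products and the Abel--Jacobi map — is precisely the content the paper defers to \cite[Section 4.2]{DWWW11}.
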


\section{The Hitchin function}\label{sect:hitchinfn}

We now examine the function mentioned in \eqref{Hitchinf}.  Fix a smooth principal $H$-bundle, $E_H$, with topological invariant $d$ and use the description of $\mathcal{M}_d(G)$ in which points are represented by gauge orbits of points $(A,\varphi)\in\mathcal{A}\times\Omega^{1,0}(E_H(\mclie))$.  The function is then

\begin{align}
f:&\mathcal{M}_d(G)\rightarrow\R\nonumber\\
&[A,\varphi]\mapsto\ ||\varphi ||_{L^2}^2 , \label{hitchinfunction}
\end{align}
where the $L^2$-norm of the Higgs field is computed using the (fixed) metric determined by the Ad-invariant bilinear form on $G$ and the metric on $X$. If we regard points in $\mathcal{M}_d(G)$ as isomorphism classes of holomorphic pairs $(E_{\HC},\varphi)$ or, equivalently, as complex gauge orbits in $\mathcal{A}^{0,1}(E_{\HC})\times\Omega^{1,0}(E_H(\mclie)$, then the function is defined by

\begin{equation}
f([E_{\HC},\varphi])=||g(\varphi)||^2 ,
\end{equation}

\noi where $g\in\mathcal{G}^{\C}$ is the element (unique up to real gauge transformations) such that $g(E_{\HC},\varphi)$ lies in the the zero set of the moment map $\Psi$.

\begin{theorem}The map
\begin{align}
S^1\times\mathcal{M}_d(G)\rightarrow\mathcal{M}_d(G)\nonumber\\
e^{i\theta}[E,\varphi]=[E,e^{i\theta}\varphi]\label{S1action}
\end{align}
\noi defines an action which is Hamiltonian on the smooth locus and for which the above function $f$ is the moment map. 
\end{theorem}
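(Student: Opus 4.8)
The plan is to lift the $S^1$-action to the infinite-dimensional space $\mathcal{A}(E_H)\times\Omega^{1,0}(E_{\HC}(\mclie))$, exhibit a Hamiltonian function there, and then push everything down to $\mathcal{M}_d(G)=\Psi^{-1}(0)/\mathcal{G}$ by Marsden--Weinstein reduction. First I would check that $(A,\varphi)\mapsto(A,e^{i\theta}\varphi)$ is a well-defined $S^1$-action upstairs that descends to $\mathcal{M}_d(G)$. Since $\mclie$ is a complex vector space, $e^{i\theta}\varphi$ again lies in $\Omega^{1,0}(E_{\HC}(\mclie))$, and $\bar{\partial}_A(e^{i\theta}\varphi)=e^{i\theta}\bar{\partial}_A\varphi$, so the holomorphicity constraint is preserved. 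Because $\tau$ is conjugate-linear we have $\tau(e^{i\theta}\varphi)=e^{-i\theta}\tau(\varphi)$, hence
\begin{equation*}
[e^{i\theta}\varphi,\tau(e^{i\theta}\varphi)]=[\varphi,\tau(\varphi)],
\end{equation*}
so $\Psi$ is $S^1$-invariant and the action preserves $\Psi^{-1}(0)$. The action commutes with the real gauge group $\mathcal{G}$ (which acts on $\varphi$ by the adjoint representation, leaving scalar multiplication untouched), so it descends to the quotient; well-definedness on isomorphism classes also follows from the fact that rescaling $\varphi$ does not change the set of $\varphi$-invariant subbundles and hence preserves polystability.

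Second, I would show the lifted action is Hamiltonian on $\mathcal{A}\times\Omega^{1,0}(E_{\HC}(\mclie))$ with Hamiltonian $\tilde f(A,\varphi)=\|\varphi\|_{L^2}^2$. Because the action is the identity in the $A$-factor and complex-linear (scalar multiplication) in the $\varphi$-factor, it is a complex-linear isometry of the $L^2$ Kähler structure and in particular preserves the Kähler form $\omega_{L^2}$. The generating vector field at $(A,\varphi)$ is $\tfrac{d}{d\theta}\big|_{0}(A,e^{i\theta}\varphi)=(0,i\varphi)$, and a direct computation using $\omega_{L^2}(u,v)=g(Iu,v)$, with $I$ the complex structure and $g=\Re\langle\,\cdot\,,\,\cdot\,\rangle_{L^2}$, gives
\begin{equation*}
\iota_{(0,i\varphi)}\omega_{L^2}(\delta A,\delta\varphi)=\omega_{L^2}(i\varphi,\delta\varphi)=-\Re\langle\varphi,\delta\varphi\rangle_{L^2},
\end{equation*}
while $d\tilde f(\delta A,\delta\varphi)=2\Re\langle\varphi,\delta\varphi\rangle_{L^2}$. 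Thus, up to the universal constant and sign fixed by the chosen symplectic and moment-map conventions, $\tilde f$ generates the rotation action. This elementary fact, that the norm-square of a linear coordinate is the Hamiltonian for its phase rotation, is the computational core of the statement.

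Third, I would transfer the structure to the quotient. The function $\tilde f$ is $\mathcal{G}$-invariant, since elements of $\mathcal{G}=\Omega^0(\Ad(E_H))$ act unitarily and preserve $\|\varphi\|_{L^2}$; hence it descends to a function on $\mathcal{M}_d(G)$ which, evaluated on representatives in $\Psi^{-1}(0)$, equals the stated $f$. On the smooth locus $\mathcal{M}_d(G)$ is a manifold carrying the reduced Kähler form $\omega_{\mathrm{red}}$ determined by $\pi^*\omega_{\mathrm{red}}=\iota^*\omega_{L^2}$, where $\iota:\Psi^{-1}(0)\hookrightarrow\mathcal{A}\times\Omega^{1,0}(E_{\HC}(\mclie))$ and $\pi$ is the quotient map. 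Since the $S^1$-action is Hamiltonian upstairs, commutes with $\mathcal{G}$, and preserves $\Psi^{-1}(0)$, the reduction principle for commuting Hamiltonian actions applies: the induced $S^1$-action on the smooth locus is Hamiltonian and its moment map is the descent of $\tilde f$, namely $f$. Concretely, restricting the identity $d\tilde f=\iota_{(0,i\varphi)}\omega_{L^2}$ to $\Psi^{-1}(0)$ and pushing it through $\pi$ against $\pi^*\omega_{\mathrm{red}}=\iota^*\omega_{L^2}$ yields $df=\iota_{X_{S^1}}\omega_{\mathrm{red}}$ downstairs.

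The main obstacle is this last step: making the descent rigorous requires restricting to the smooth locus so that $\omega_{\mathrm{red}}$ is genuinely defined (the singularities of $\mathcal{M}_d(G)$ at strictly polystable points are exactly where this fails, which is why the statement is qualified to the smooth locus), and it relies on the earlier identification of $\Psi$ as a moment map for the $L^2$ Kähler structure so that the $L^2$-pairings and the Marsden--Weinstein formalism are valid in this gauge-theoretic setting. The remaining verifications---$S^1$-invariance of $\Psi$, $\mathcal{G}$-invariance of $\tilde f$, and the commutativity of the two actions---are routine.
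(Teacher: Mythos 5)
Your proposal is correct and is essentially the argument the paper relies on: the paper states this theorem without proof (the result goes back to Hitchin's work, which the survey cites), but Section 2.2 of the paper sets up exactly the symplectic quotient description $\mathcal{M}_d(G)=\Psi^{-1}(0)/\mathcal{G}$ with the $L^2$ K\"ahler structure that your Marsden--Weinstein reduction argument uses. Your three steps --- checking that phase rotation of $\varphi$ preserves holomorphicity, $\Psi^{-1}(0)$ (via conjugate-linearity of $\tau$) and polystability; the elementary computation that $\|\varphi\|_{L^2}^2$ is a Hamiltonian for the rotation upstairs; and descent of the Hamiltonian structure to the reduced K\"ahler form on the smooth locus --- constitute the standard and intended proof, with the conventions-dependent constant and the restriction to the smooth locus handled appropriately.
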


\noi It follows immediately by a theorem of Frankel for $S^1$-actions on K\"ahler manifolds (\cite{frankel:1959})that:

\begin{theorem}
The critical points of $f$ are given by the fixed points of the $S^1$ action.  Moreover,  $f$ is a perfect non-degenerate Morse-Bott function on the smooth locus of $\mathcal{M}_d(G)$.
\end{theorem}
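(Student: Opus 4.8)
The plan is to verify the hypotheses of Frankel's theorem and then to extract the three conclusions from its proof, the only genuine work being the treatment of non-compactness. Write $M$ for the smooth locus of $\mathcal{M}_d(G)$, with its K\"ahler structure $(\omega, J, g)$ where $g=\omega(\cdot,J\cdot)$. By the preceding theorem the circle acts on $M$ preserving $\omega$ with moment map $f$; since the action $[E,\varphi]\mapsto[E,e^{i\theta}\varphi]$ is manifestly holomorphic it also preserves $J$, and hence preserves $g$. Thus $S^1$ acts by holomorphic isometries, which is precisely the setting required by Frankel \cite{frankel:1959}. First I would identify the critical set: letting $X$ denote the vector field generating the action, the moment map relation $df=\iota_X\omega$ together with $\omega(\cdot,\cdot)=g(J\cdot,\cdot)$ gives $\grad f = JX$ (up to sign convention). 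As $\omega$ is non-degenerate and $J$ invertible, $df_p=0$ if and only if $X_p=0$, i.e. if and only if $p$ is a fixed point of the $S^1$-action. This proves the first assertion.

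Next I would establish the local Morse-Bott model at a fixed point $p$. Because the action is holomorphic and isometric, its linearization on $T_pM$ is a unitary complex representation of $S^1$, splitting into complex weight spaces with integer weights $w_1,\dots,w_n$; the zero-weight subspace is exactly the tangent space to the fixed-point component through $p$. Since $X$ is, to first order, the linear vector field with these weights and $\grad f = JX$, a direct computation of the Hessian of $f$ shows it restricts to $w_j$ times the metric on the $j$-th weight plane. Hence the Hessian is non-degenerate precisely in the directions with $w_j\neq 0$, i.e. transverse to the fixed-point set, so $f$ is a non-degenerate Morse-Bott function whose critical submanifolds are the fixed-point components. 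The index at such a component is $2\,\#\{j:w_j<0\}$, which is even, and the negative normal bundle is the sum of the negative-weight planes, hence a complex (in particular canonically oriented) vector bundle.

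For perfection I would invoke Frankel's theorem directly. The point is that, because the negative normal bundles are complex, they carry canonical Thom classes, and their equivariant Euler classes with respect to the acting circle are products of factors involving the nonzero weights $w_j$, hence are not zero-divisors. The Atiyah-Bott lemma \cite[Proposition 13.4]{AtiyahBott83} then forces the long exact sequences of the Morse-Bott stratification to split, so that the Morse-Bott inequalities are equalities; this is exactly Frankel's perfection, and it holds even though the critical submanifolds themselves may carry odd cohomology (as they do already for $G=\SL(2,\C)$). I would emphasize that this is the mechanism that replaces the naive lacunary argument, which fails here because the fixed-point sets are not even-cohomology spaces.

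The step I expect to be the main obstacle is that Frankel assumes $M$ compact, whereas $\mathcal{M}_d(G)$ is non-compact. This is overcome by the fact that $f=\|\varphi\|^2$ is bounded below and \emph{proper} (properness being essentially the properness of the Hitchin map), so every sublevel set $\{f\le c\}$ is compact and the downward gradient flow is complete and convergent; the required Palais-Smale-type compactness then holds, and the Morse theory of $f$ proceeds exactly as in the compact case, so Frankel's conclusions hold verbatim. Finally I would note the caveat built into the statement: when $\mathcal{M}_d(G)$ is globally smooth (for instance the coprime $\GL(n,\C)$ case) this yields global perfection, while in general all statements are restricted to the smooth locus, as in the theorem.
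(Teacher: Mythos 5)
You take the same route as the paper: the paper's entire proof is the remark that the statement ``follows immediately by a theorem of Frankel for $S^1$-actions on K\"ahler manifolds'' \cite{frankel:1959}, applied to the Hamiltonian action established in the preceding theorem. Your verification of Frankel's hypotheses, the identification of the critical set with the zero set of the generating vector field via $\grad f = JX$, and the weight-space computation of the Hessian are exactly the content of that citation (the paper records the same facts afterwards, in \eqref{Hf=Hs} and Proposition \ref{wtspaces}). You also supply something the paper silently omits: Frankel's theorem is stated for compact manifolds, while $\mathcal{M}_d(G)$ is not compact, so a substitute for compactness is needed; properness of $f$ (Theorem \ref{th:proper}) is the correct substitute, giving compact sublevel sets and a locally finite set of critical values, so that the sublevel-set induction runs as in the compact case. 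One small correction: the paper proves properness by a Weitzenb\"ock estimate plus Uhlenbeck compactness, not via properness of the Hitchin map; those are different (if related) statements.

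The one step you should repair is the perfection argument. The Atiyah--Bott lemma \cite[Proposition 13.4]{AtiyahBott83} says that the $S^1$-\emph{equivariant} Euler class of the negative normal bundle of a critical component $N$ is not a zero divisor in $H^*_{S^1}(N)$; this splits the long exact sequences in \emph{equivariant} cohomology and yields equivariant perfection. It does not directly split the ordinary long exact sequences---the ordinary Euler class of the negative normal bundle can perfectly well be a zero divisor---and ordinary perfection is what the theorem asserts and what \eqref{poincare} uses. The standard bridge is the descent argument (over $\QQ$): equivariant perfection gives $P^{S^1}_t(\mathcal{M}) = (1-t^2)^{-1}\sum_N t^{\lambda_N} P_t(N)$, since the action on each $N$ is trivial; the Leray--Serre spectral sequence of the Borel fibration gives the coefficientwise bound $P^{S^1}_t(\mathcal{M}) \le (1-t^2)^{-1} P_t(\mathcal{M})$; and the ordinary Morse--Bott inequalities give $P_t(\mathcal{M}) \le \sum_N t^{\lambda_N} P_t(N)$. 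Chaining these forces equality throughout, which is ordinary perfection. This step is routine---it is how Kirwan \cite{Kirwan84} passes from equivariant to ordinary statements---but as written your proof conflates the two cohomology theories; with that step inserted, it is complete.
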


\noi Furthermore, if $M$ is any such K\"ahler manifold with an $S^1$-action and $p\in M$ is a smooth critical point of the moment map, then there are two linear maps on the tangent space $T_pM$, namely

\begin{enumerate}
\item the Hessian of the moment map, $H_f$, and
\item the infinitesimal $S^1$-action, $H_S$
\end{enumerate}

\noi and these are related by

\begin{equation}\label{Hf=Hs}
H_f=-iH_S .
\end{equation}

\noi It follows from \eqref{Hf=Hs} that

\begin{proposition}\label{wtspaces} At a critical point $p\in M$ for the $S^1$-moment map $f$, the decomposition of $T_pM$ into eigenspaces for $H_f$ coincides with the weight-space decomposition for the $S^1$-action and that the $\lambda$-eigenspace for $H_f$ is the $-i\lambda$-weight space for the $S^1$-action.
\end{proposition}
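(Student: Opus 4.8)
The plan is to deduce the proposition directly from the identity \eqref{Hf=Hs}, since once that relation is in hand the statement becomes a linear-algebra observation about two operators on the finite-dimensional complex vector space $T_pM$. First I would make precise what the two decompositions are. Because $p$ is a fixed point of the $S^1$-action, the action linearizes to a representation $A_\theta := (d e^{i\theta})_p$ of $S^1$ on $T_pM$, and the \emph{weight-space decomposition} is by definition the decomposition of $T_pM$ into the common eigenspaces of this representation. Writing $A_\theta = \exp(\theta H_S)$, where $H_S$ is the infinitesimal generator appearing in the statement, the weight spaces are exactly the eigenspaces of $H_S$; and since $A_\theta$ is a one-parameter group of isometries with $A_{2\pi} = \Id$, the operator $H_S$ is skew-adjoint, so its eigenvalues are purely imaginary and may be written as $-i\lambda$ with $\lambda \in \R$.

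Next I would invoke \eqref{Hf=Hs}. The Hessian $H_f$, viewed through the metric as a self-adjoint operator on $T_pM$, satisfies $H_f = -iH_S$, so $H_f$ and $H_S$ are complex scalar multiples of one another. Two operators related in this way are simultaneously diagonalizable and share exactly the same eigenspaces; this gives at once the first assertion, namely that the eigenspace decomposition of $H_f$ coincides with the weight-space decomposition of the $S^1$-action. The eigenvalue bookkeeping is then read off from the scalar factor in \eqref{Hf=Hs}: tracking the factor $-i$ through the eigenspace labels identifies the $\lambda$-eigenspace of $H_f$ with the $(-i\lambda)$-weight space of the $S^1$-action, which is the second assertion. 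No infinite-dimensional analysis is involved, as everything takes place on the tangent space at a single smooth critical point.

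The only genuinely delicate point — and the step I would flag as the main obstacle — is the bookkeeping of conventions, so that the factor relating an $H_f$-eigenvalue to its associated $S^1$-weight comes out exactly as the stated $-i\lambda$ rather than its negative or conjugate. This depends on fixing mutually compatible sign conventions for the complex structure $J$, the Kähler form, the moment map $f$, and the infinitesimal generator $H_S$, all of which feed into the derivation of \eqref{Hf=Hs}. Since that identity is granted, I would adopt precisely the conventions under which it was established and carry the scalar $-i$ consistently through the eigenvalue labels; with those conventions fixed, the remainder of the argument is the purely formal statement that proportional operators have identical eigenspaces.
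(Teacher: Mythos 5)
Your proposal is correct and follows the paper's own route: the paper states the proposition as an immediate consequence of the relation $H_f=-iH_S$ in \eqref{Hf=Hs}, which is precisely the linear-algebra deduction you carry out (proportional operators share eigenspaces, with the scalar $-i$ accounting for the eigenvalue/weight relabeling). Your added remarks on the skew-adjointness of $H_S$ and the sign conventions are sound elaborations of what the paper leaves implicit.
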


\noi In cases where $\mathcal{M}_d(G)$ is smooth, it follows that the Poincare polynomial is given by

\begin{equation}\label{poincare}
P_t(\mathcal{M}_d(G))=\Sigma_Nt^{\lambda_N}P_t(N)
\end{equation}

\noi where the sum is over the critical submanifolds of $f$ and $\lambda_N$ is the index of critical submanifold $N$, however in order to use formula \eqref{poincare} one has to
\begin{itemize}
\item identify the fixed points of the $S^1$-action and hence the critical submanifolds $N$, 
\item compute the Morse indices $\lambda_N$, and
\item compute the Poincare polynomials $P_t(N)$.
\end{itemize}

\noi In Section \ref{subs:best} we discuss some cases where all these steps can be carried out. Even in cases where this is not possible, either because the moduli space is not smooth or because the requisite properties for the critical submanifolds are not known, the Hitchin function $f$ retains the following useful features (first shown by Hitchin in \cite{Hitchin87}).

\begin{theorem}\label{th:proper}The function $f:\mathcal{M}_d(G)\rightarrow\R$ defined as in \eqref{hitchinfunction} is a proper map and hence attains a minimum on each connected component.
\end{theorem}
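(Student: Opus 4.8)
The plan is to prove that $f$ is proper and then deduce the existence of a minimum from a minimizing-sequence argument. Since $f \ge 0$ is continuous, properness is equivalent to the statement that each sublevel set $f^{-1}([0,C])$ is sequentially compact. I would therefore take a sequence of classes $[E_i,\varphi_i] \in \mathcal{M}_d(G)$ with $f([E_i,\varphi_i]) \le C$ and aim to extract a subsequence converging in $\mathcal{M}_d(G)$.

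The first step is to work with the preferred gauge-theoretic representatives. By the Hitchin--Kobayashi correspondence of Section~\ref{subs:modspaces}, every polystable class is represented by a solution $(A_i,\varphi_i)$ of the equations $F_{A_i} - [\varphi_i,\tau(\varphi_i)] = c\omega$ and $\bar\partial_{A_i}\varphi_i = 0$, unique up to the real gauge group, and for this representative $\|\varphi_i\|_{L^2}^2 = f([E_i,\varphi_i]) \le C$. Thus I start from solutions of Hitchin's equations carrying a uniform $L^2$-bound on the Higgs field.

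The heart of the argument is an a priori pointwise bound on $\varphi$ that upgrades this $L^2$-bound to an $L^\infty$-bound depending only on $C$ and the fixed geometry of $X$. Combining the holomorphicity $\bar\partial_A\varphi = 0$ with a Weitzenb\"ock/Bochner identity and substituting the curvature from the first Hitchin equation, the negative terms $-|\nabla_A\varphi|^2$ and $-|[\varphi,\tau(\varphi)]|^2$ appear with the favourable sign and may be discarded, leaving a differential inequality of the schematic form $\Delta|\varphi|^2 \le a|\varphi|^2 + b$, where $a,b$ depend only on the Gauss curvature of $X$ and on the central constant $c$. As $|\varphi|^2$ is then a subsolution of a uniformly elliptic operator with bounded zeroth-order coefficient, the local boundedness (Moser iteration / mean value) estimate gives $\sup_X|\varphi|^2 \le C'(\|\varphi\|_{L^2}^2 + 1) \le C''$. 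Feeding this back into $F_{A_i} = c\omega + [\varphi_i,\tau(\varphi_i)]$ yields a uniform $L^\infty$-bound, hence an $L^p$-bound for every $p$, on the curvatures $F_{A_i}$. This is the step I expect to be the main obstacle, since it is precisely where the special structure of the problem must be exploited, namely the holomorphicity together with the correct sign of the curvature term forced by the Hitchin equation; it is the generalisation of the estimate first established by Hitchin in \cite{Hitchin87}.

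With uniform $L^p$-control of $\{F_{A_i}\}$ and $L^\infty$-control of $\{\varphi_i\}$ in hand, the final step is gauge-theoretic compactness. Applying Uhlenbeck's theorem \cite{Uhlenbeck82} on the compact surface $X$ (where no bubbling occurs), after passing to a subsequence there are real gauge transformations putting the $A_i$ into Coulomb gauge and producing weak $W^{1,p}$-convergence $A_i \to A_\infty$; bootstrapping the linear elliptic equation $\bar\partial_{A_i}\varphi_i = 0$ with the $L^\infty$-bound then gives convergence $\varphi_i \to \varphi_\infty$ in the appropriate Sobolev norm. The limit $(A_\infty,\varphi_\infty)$ again solves Hitchin's equations, hence determines a polystable $G$-Higgs bundle and a point of $\mathcal{M}_d(G)$ to which the original subsequence converges in the moduli-space topology. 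Since $\mathcal{M}_d(G)$ is metrizable (being a complex analytic variety by Theorem~\ref{alg-moduli}), sequential compactness of $f^{-1}([0,C])$ gives compactness, so $f$ is proper. Finally, on any connected component $\mathcal{M}'$, which is closed, a minimizing sequence for $m = \inf_{\mathcal{M}'} f$ eventually lies in the compact set $f^{-1}([0,m+1]) \cap \mathcal{M}'$; a convergent subsequence has limit in $\mathcal{M}'$ realizing $f = m$ by continuity, so the minimum is attained.
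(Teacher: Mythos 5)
Your proposal is correct, and its skeleton coincides with the paper's: pass to the Hitchin--Kobayashi representatives solving \eqref{Higgsequations1}--\eqref{Higgsequations2}, use a Weitzenb\"ock identity together with holomorphicity of $\varphi$ to convert the $L^2$-bound on the Higgs field into a bound on the curvature, and conclude with Uhlenbeck compactness \cite{Uhlenbeck82}. The difference lies in the key estimate. The paper simply \emph{integrates} the Weitzenb\"ock identity over $X$: discarding the nonnegative term $\|d_A^{1,0}\varphi\|_{L^2}^2$ and substituting the Hitchin equation gives, in one line,
\begin{equation*}
\|F_A\|_{L^2}^2=\|[\varphi,\tau(\varphi)]\|_{L^2}^2\le (2g-2)\,f([A,\varphi]),
\end{equation*}
and since the base is a surface this $L^2$-curvature bound already suffices for Uhlenbeck's theorem (here $p=2>n/2=1$, and no bubbling can occur). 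You instead keep the identity \emph{pointwise}, obtaining a differential inequality $\Delta|\varphi|^2\le a|\varphi|^2+b$ and running Moser iteration to get $\sup_X|\varphi|^2\le C''$, hence $L^\infty$ (so $L^p$ for all $p$) control of $F_A$. Your route is the one closer to Hitchin's original argument in \cite{Hitchin87}; it costs more work but buys stronger uniform bounds, which is what one would need in higher dimensions or to upgrade to smooth convergence of the representatives, whereas the paper's integrated estimate is the minimal input needed in the surface case. Your supplementary points --- bootstrapping $\bar\partial_{A_i}\varphi_i=0$ to get convergence of the Higgs fields, metrizability to pass from sequential compactness to properness, and the minimizing-sequence argument on a closed component --- are all sound and simply flesh out what the paper leaves implicit in its sketch.
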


\begin{proof}(sketch)  Recall that points in $\mathcal{M}_d(G)$ are represented by pairs $(A,\varphi)\in\mathcal{A}\times\Omega^{1,0}(E_H(\mclie)$ which satisfy  equations \eqref{Higgsequations1} and \eqref{Higgsequations2}. Applying a Weitzenbock formula using the connection $A$ and the metric on $X$ thus leads to the estimate

\begin{align}
0\le||d_A^{1,0}\varphi||_{L^2}^2&= -<F_A\ ,[\varphi,\tau(\varphi)]>+(2g-2)||\varphi||_{L^2}^2\\
&= -||[\varphi,\tau(\varphi)]||^2_{L^2}+(2g-2)||\varphi||_{L^2}^2 .
\end{align}

\noi where $d_A^{1,0}$ denotes the holomorphic part of the covariant derivative and the pairing $<\ ,\ >$ uses the Ad-invariant bilinear form on $\lieg$ and the inner product on forms of type $(1,1)$.  

Thus 

\begin{equation}
||F_A||^2_{L^2}=||[\varphi,\tau(\varphi) ]||^2_{L^2}\le (2g-2) f([A,\varphi])
\end{equation}

\noi Since the connections are on a bundle over a compact Riemann surface, the result now follows from a compactness result by Uhlenbeck (\cite{Uhlenbeck82}) for connections with $L^2$-bounds on curvature.
\end{proof}

\noi In order to count the connected components of $\mathcal{M}_d(G)$ it is thus enough to

\begin{itemize}
\item identify the fixed points of the $S^1$-action and hence the critical submanifolds $N$, 
\item show that each minimal submanifold $N$ is connected
\end{itemize}

\noi In Section \ref{subs:goodenough} we discuss some cases where these steps can be carried out.

\subsection{Critical points}\label{subs:cpts}

Denote the infinitesimal generator of the $S^1$-action \eqref{S1action} by the vector field $X$ on $\mathcal{M}_d(G)$. Since $f$ is a moment map with respect to the symplectic form defined by a K\"ahler metric, we have for any other vector field $Y$

\begin{equation}
df(Y)= -k(Y,JX)
\end{equation}

\noi where $J$ denotes the involution which defines the complex structure and the pairing $k$ denotes the K\"ahler metric. In particular, a point $[A,\varphi]\in\mathcal{M}_d(G)$ is a critical point of $f$ if and only if
$X_{[A,\varphi]}=0$, i.e. 

\begin{equation}
\frac{d}{dt}[A,e^{it}\varphi]|_{t=0}=0
\end{equation}

\noi There are two types of critical points that can occur:

{\bf Type 1: [$\varphi=0$]} Clearly any points at which $\varphi=0$ are fixed points of the $S^1$-action and hence critical points of $f$.  Moreover, since $f=0$ at these points, they are in fact global minima.

{\bf Type 2: [$\varphi\ne0$]} Minima with $\varphi\ne 0$ can occur because the points in $\mathcal{M}_d(G)$ are {\it isomorphism classes} of Higgs bundles or, equivalently {\it gauge orbits} of pairs in the configuration space.  The condition for a point represented by the Higgs bundle $(A,\varphi)$ to be a fixed point is that there is a 1-parameter family of gauge transformations, say $g_{\theta}$, such that

\begin{equation}\label{thetane0}
(A, e^{i\theta}\varphi)=g_{\theta}(A,\varphi)
\end{equation}

\noi for all $\theta$.  Let $\Psi$ be the infinitesmal action of $g_{\theta}$, i.e. $(d^A(\Psi), [\Psi, \varphi]) = \left. \frac{d}{d \theta} \right|_{\theta = 0} g_\theta (A, \varphi)$.  Then \eqref{thetane0} implies 

\begin{align}\label{thetaonphi}
\begin{split}
d^A(\Psi)=0,\\
[\Psi, \varphi] = i\varphi\ .
\end{split}
\end{align}

\begin{example}\label{ExGln} If $G=\GL(n,\C)$ and we use the standard representation to replace $E_{\HC}$ by a rank $n$ vector bundle $V$ then the first condition in \eqref{thetaonphi} implies that $\Psi$ decomposes $V$ into a direct sum of eigenbundles, say

\begin{equation}\label{GlHodge1} 
V=\bigoplus_{\lambda} V_{\lambda} \ ,
\end{equation}

\noi where the eigenvalue on $V_{\lambda}$ is $i\lambda$.  The second condition then says that 

\begin{equation}\label{PsionE}
\varphi:V_{\lambda}\rightarrow V_{\lambda+1}\otimes K
\end{equation}

\noi In other words, the Higgs bundle $(V,\varphi)$ defines a variation of Hodge structure (as defined, for example, in \cite{Simpson92}).
\end{example}

In general, the infinitesimal gauge transformation $\Psi$ defines eigenbundle decompositions 

\begin{align}\label{decomp}
E_{H}(\liehc)=\bigoplus_{\mu} (E_{H}(\liehc))_{\mu}\\
E_H(\mclie)=\bigoplus_{\nu} (E_{H}(\mclie))_{\nu}
\end{align}

\noi where the eigenvalues on the summands are $i\mu$ and $i\nu$ respectively. The second condition in \eqref{thetaonphi} implies that

\begin{align}\label{Hodge2}
ad(\varphi):(E_{H}(\liehc))_{\mu}\rightarrow (E_{H}(\mclie))_{\mu+1}\otimes K\\
ad(\varphi):(E_{H}(\mclie))_{\nu}\rightarrow (E_{H}(\liehc))_{\nu+1}\otimes K
\end{align}

\noi where the maps are defined by the adjoint action.  In cases where there is a `natural' or `standard' representation of $\HC$ so that $E_{\HC}$ can be replaced by one or more vector bundles, the fixed points of the $S^1$-action have a Hodge bundle structure and the eigenbundles of  $E_{H}(\liehc)$ and $E_{H}(\mclie)$ can be described in terms of the Hodge bundle summands. 

\begin{example} For $G=\GL(n,\C)$ where, as in Example \ref{ExGln}, we can describe the Higgs bundles as pairs $(V,\varphi)$ in which $V$ is a rank $n$ vector bundle,  the Hodge bundles are as in \eqref{GlHodge1} and \eqref{Hodge2}.  In this case $\liehc=\mclie=\mathfrak{gl}(n,\C)$ and $E_{H}(\liehc)=E_{H}(\mclie)=\End(V)$.  The eigenbundles in the decomposition of $\End(V)$ are given by

\begin{equation}
\End(V)_{\lambda}=\bigoplus_{\lambda=\nu-\mu} \Hom(V_{\mu},V_{\nu})
\end{equation}

\noi i.e. the eigenvalues are of the form $\mu-\nu$ where $\mu,\nu$ are eigenvalues in the Hodge decomposition of $V$.
\end{example}

\begin{example}\label{Ex;Sp2nR}\footnotemark\footnotetext{See \cite{garcia-gothen-mundet:2008} for details}For $G=\Sp(2n,\R)$ we have, as in Table 1,  $H=\U(n)$, so $E_{\HC}$ is a principal $\GL(n,\C)$-bundle. If we let $V$ be the rank $n$ vector bundle given by the standard representation, then 

\begin{align}\label{SpIso}
E_{H}(\liehc)&=\End(V)\nonumber\\
E_H(\mclie)&=Sym^2(V)\oplus Sym^2(V^*)
\end{align}

\noi  so, as in Table 2,  we can describe $\Sp(2n,\R)$-Higgs bundles as triples $(V,\beta,\gamma)$ where 

\begin{equation}
\beta\in H^0(Sym^2(V)\otimes K)\quad and\quad \gamma\in H^0(Sym^2(V^*)\otimes K)\ .
\end{equation}

\noi At the fixed points of the $S^1$-action the infinitesimal generator of the gauge transformation, i.e. the $\Psi$ in \eqref{thetaonphi}, yields an eigenbundle decomposition as in \eqref{GlHodge1} but the effect of the second condition in \eqref{thetaonphi} is not the same as in  \eqref{PsionE}.  For $\Sp(2n,\R)$-Higgs bundles the condition is equivalent to the conditions

\begin{align}
\Psi\beta-\beta\Psi^*&=i\beta\\
\Psi^*\gamma-\gamma\Psi&=i\gamma
\end{align}

\noi and hence we get that 

\begin{equation}\label{SpHodge}
\beta:V^*_{\lambda}\rightarrow V_{\lambda+1}\otimes K\quad\mathrm{and}\quad \gamma:V_{\lambda}\rightarrow V^*_{-\lambda-1}\otimes K
\end{equation}

\noi where $V=\bigoplus_{\lambda} V_{\lambda} $ is the eigenbundle decomposition of $V$.  Thus:

\begin{definition}\label{defn:SpHodge}  The Hodge bundles corresponding to the $S^1$-fixed points on the $\Sp(2n,\R)$-Higgs bundle moduli spaces are of the form $(V,\beta,\gamma)$ with $V=\bigoplus_{\lambda} V_{\lambda} $ and $\beta,\gamma$ satisfying \eqref{SpHodge}.
\end{definition}

\noi It follows from \eqref{SpIso} that the eigenbundle decompositions for $E_{H}(\liehc)$ and $E_H(\mclie)$ are given by

\begin{align}
\End(V)_{\lambda}&=\bigoplus_{\lambda=\mu-\nu}\Hom(V_{\mu},V_{\nu})\\
Sym^2(V)_{\lambda+1}&=\bigoplus_{\mu+\nu=\lambda+1}V_{\mu}\otimes V_{\nu}\oplus Sym^2(V_{\frac{\lambda+1}{2}})\\
Sym^2(V^*)_{\lambda+1}&=\bigoplus_{-\mu-\nu=\lambda+1}V^*_{\mu}\otimes V^*_{\nu}\oplus Sym^2(V^*_{\frac{\lambda+1}{2}})
\end{align}

\end{example}

\subsection{Morse indices and a criterion for the minima}\label{subs:min}

Having identified the critical points of the function $f$, the next step in the Morse theory program requires computation of Morse indices, and this in turn requires an understanding of the tangent spaces to the Higgs bundle moduli spaces.  The main ideas go back to Hitchin's papers (see \cite{Hitchin92}):

The space of infinitesimal deformations of a $G$-Higgs bundle $(E_{H},\varphi)$ can be identified as the first hypercohomology of a deformation complex 

\begin{equation}\label{defcx}
C^{\bullet}(E_{H},\varphi) : E_H(\liehc)\stackrel{ad(\varphi)}\longrightarrow E_H(\mclie)\otimes K
\end{equation}

\noi  If $(E_{H},\varphi)$ represents a smooth point in the moduli space then the first hypercohomology of the deformation complex  is canonically isomorphic to the tangent space at the point, i.e.

\begin{equation}
\mathbb{H}^1(C^{\bullet}(E_{H},\varphi)) \simeq T_{[E_{H},\varphi]}\mathcal{M}_d(G)\ .
\end{equation}

\noi The decompositions \eqref{decomp}, together with \eqref{Hodge2}, decompose the deformation complex as 

\begin{equation}\label{defcxdecomp}
C^{\bullet}(E_{H},\varphi)=\bigoplus_{\mu} C^{\bullet}(E_{H},\varphi)_{\mu}
\end{equation}

\noi where

\begin{equation}\label{defcxmu}
C^{\bullet}(E_{H},\varphi)_{\mu} : E_H(\liehc)_{\mu}\stackrel{ad(\varphi)}\longrightarrow E_H(\mclie)_{\mu+1}\otimes K
\end{equation}

\noi At smooth critical points of $f$ the hypercohomology group $\mathbb{H}^1(C^{\bullet}(E_{H},\varphi)_{\mu} )$ is isomorphic to the $-\mu$ eigenspace for the Hessian $H_f$.

The above results allow Morse indices to be calculated and in particular yield the following criterion for a smooth critical point to be a minimum (see \cite{Hitchin92}):

\begin{proposition}\label{smoothmin} Let $(E_{\HC},\varphi)$ represent a fixed point of the $S^1$-action on the smooth locus of a moduli space of polystable $G$-Higgs bundles $\mathcal{M}_d(G)$. Then the point is a local minimum if and only if 

\begin{equation}
\mathbb{H}^1(C^{\bullet}(E_{H},\varphi)_{\mu} )=0\quad \forall\ k>0\ .
\end{equation}
\end{proposition}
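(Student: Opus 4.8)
The plan is to reduce the statement to sign-bookkeeping for the Hessian $H_f$ via the weight-space decomposition of the $S^1$-action, so that essentially no new analysis is required beyond the structural facts already assembled. Since the point represents a smooth point of $\mathcal{M}_d(G)$, the tangent space is canonically identified with $\mathbb{H}^1(C^\bullet(E_H,\varphi))$, and the Hitchin function $f$, being the moment map for the $S^1$-action, has a well-defined Hessian $H_f$ on this space. First I would invoke the Frankel relation \eqref{Hf=Hs} together with Proposition \ref{wtspaces}: at the critical point the eigenspace decomposition of $H_f$ coincides with the weight-space decomposition for the $S^1$-action, the $\lambda$-eigenspace of $H_f$ being a weight space of the infinitesimal action.

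The second step is to match this abstract eigenspace decomposition with the algebraic decomposition of the deformation complex. The eigenbundle decompositions \eqref{decomp} induced by the infinitesimal generator $\Psi$ split the complex as in \eqref{defcxdecomp}--\eqref{defcxmu}, and this grading is precisely the weight grading of the $S^1$-action on $\mathbb{H}^1$. As recorded just above the statement, $\mathbb{H}^1(C^\bullet(E_H,\varphi)_\mu)$ is the $(-\mu)$-eigenspace of $H_f$. Consequently the negative eigenspace of $H_f$ --- the sum of the $\lambda$-eigenspaces with $\lambda<0$ --- is exactly $\bigoplus_{\mu>0}\mathbb{H}^1(C^\bullet(E_H,\varphi)_\mu)$, while the $\mu=0$ summand is the null space of $H_f$, tangent to the critical submanifold, and the $\mu<0$ summands span the positive eigenspace.

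Finally, since $f$ is a nondegenerate Morse--Bott function on the smooth locus (the theorem obtained from Frankel's theorem), the critical point is a local minimum if and only if its Morse index vanishes, that is, if and only if $H_f$ has no negative eigenvalues in the normal directions. By the preceding paragraph this is equivalent to the vanishing $\mathbb{H}^1(C^\bullet(E_H,\varphi)_\mu)=0$ for every $\mu>0$, which is the assertion of the proposition (the index in the displayed condition should read $\mu>0$).

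The point requiring care --- rather than a genuine obstacle --- is the sign convention: one must track \eqref{Hf=Hs} and Proposition \ref{wtspaces} carefully to be certain that the downhill, negative-Hessian directions correspond to positive weights $\mu$ and not to negative ones, since otherwise the stated condition would land on the wrong half of the weights. Once the identification of $\mathbb{H}^1(C^\bullet(E_H,\varphi)_\mu)$ with the $(-\mu)$-eigenspace of $H_f$ is granted, the argument is purely formal, and the only hypothesis actually used is the smoothness of the point, which guarantees that $H_f$ genuinely governs the local behaviour of $f$.
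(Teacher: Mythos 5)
Your proposal is correct and follows exactly the route the paper takes: the paper derives the proposition from precisely the facts you assemble --- the identification of the tangent space at a smooth point with $\mathbb{H}^1(C^{\bullet}(E_{H},\varphi))$, the statement that $\mathbb{H}^1(C^{\bullet}(E_{H},\varphi)_{\mu})$ is the $(-\mu)$-eigenspace of $H_f$ (via the Frankel relation \eqref{Hf=Hs} and Proposition \ref{wtspaces}), and the Morse--Bott nondegeneracy of $f$ on the smooth locus, so that vanishing index is equivalent to vanishing of the $\mu>0$ summands. You are also right that the quantifier in the displayed condition is a typo and should read $\mu>0$ rather than $k>0$.
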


\noi  In practice, i.e. when working out the details for specific examples, the vanishing of  $\mathbb{H}^1(C^{\bullet}(E_{H},\varphi)_{\mu} )$ is determined using a criterion first formulated in \cite{Hitchin92}, namely 

\begin{proposition} Under the same assumptions as in Proposition \ref{smoothmin}, the hypercohomolgy $\mathbb{H}^1(C^{\bullet}(E_{H},\varphi)_{\mu} )$ vanishes if and only if

\begin{equation}\label{hitchincriterion}
ad(\varphi):(E_{H}(\liehc))_{\mu}\rightarrow (E_{H}(\mclie))_{\mu+1}\otimes K
\end{equation}

\noi is an isomorphism.
\end{proposition}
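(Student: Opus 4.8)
The plan is to regard $C^\bullet(E_H,\varphi)_\mu$ as a two-term complex of holomorphic bundles on $X$, placed in degrees $0$ and $1$, and to extract $\mathbb{H}^1$ from the hypercohomology long exact sequence
\begin{multline*}
0\to\mathbb{H}^0\to H^0(E_H(\liehc)_\mu)\xrightarrow{ad(\varphi)} H^0(E_H(\mclie)_{\mu+1}\otimes K)\to\mathbb{H}^1 \\
\to H^1(E_H(\liehc)_\mu)\xrightarrow{ad(\varphi)} H^1(E_H(\mclie)_{\mu+1}\otimes K)\to\mathbb{H}^2\to 0 .
\end{multline*}
The easy implication is immediate: if the bundle map $ad(\varphi)\colon E_H(\liehc)_\mu\to E_H(\mclie)_{\mu+1}\otimes K$ is an isomorphism, then the complex is quasi-isomorphic to the zero complex, so every $\mathbb{H}^i$ vanishes and in particular $\mathbb{H}^1=0$.

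For the converse I would first establish that the two outer terms of the sequence vanish in the relevant range $\mu>0$. Since the hypotheses of Proposition~\ref{smoothmin} place us at a smooth point, the Higgs bundle is simple and its only infinitesimal automorphisms are central, hence of weight $0$; this gives $\mathbb{H}^0(C^\bullet(E_H,\varphi)_\mu)=0$ for $\mu\neq0$. For the top term I would use that the $\Ad$-invariance of $B$ makes $ad(\varphi)$ self-adjoint (from $B([\varphi,u],v)=-B(u,[\varphi,v])$) and identifies $E_H(\liehc)_\mu^{*}\cong E_H(\liehc)_{-\mu}$ and $E_H(\mclie)_\nu^{*}\cong E_H(\mclie)_{-\nu}$; Serre duality then presents $\mathbb{H}^2(C^\bullet(E_H,\varphi)_\mu)$ as the dual of an automorphism-type group in weight $-\mu-1<0$, which again vanishes for $\mu>0$. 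With $\mathbb{H}^0=\mathbb{H}^2=0$ the long exact sequence collapses, and $\mathbb{H}^1(C^\bullet(E_H,\varphi)_\mu)=0$ becomes equivalent to the statement that both induced maps $H^0(ad(\varphi))$ and $H^1(ad(\varphi))$ are isomorphisms.

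The hard part is the final upgrade from ``isomorphism on $H^0$ and $H^1$'' to ``isomorphism of bundles.'' Equivalently, once all the $\mathbb{H}^i$ vanish one knows only that the cohomology sheaves $\ker(ad(\varphi))$ and $\coker(ad(\varphi))$ are acyclic, and on a curve of genus $g\geq1$ an acyclic bundle need not be zero, so formal homological algebra is not enough. To close the gap I would again use the self-adjointness of $ad(\varphi)$ to realise $\ker(ad(\varphi))$ and $\coker(ad(\varphi))$ as a Serre-dual pair, reducing the two vanishings to one, and then invoke stability of the underlying Hodge bundle: a nonzero kernel in positive weight would yield a proper $\varphi$-invariant subsheaf destabilising $(E_H,\varphi)$, forcing $\ker(ad(\varphi))=0$, after which the matching of Euler characteristics (and hence of the ranks and degrees of the Hodge summands) forces $\coker(ad(\varphi))=0$ as well. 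This step---excluding acyclic but nonzero kernel and cokernel---is the one genuine obstacle, being the only place where the geometry of the variation-of-Hodge-structure critical point, rather than the long exact sequence itself, is used.
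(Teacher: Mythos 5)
A preliminary remark on the comparison itself: the paper does not prove this proposition --- it quotes it from \cite{Hitchin92} (detailed proofs in this setting appear in \cite{bradlow-garciaprada-gothen2003} and \cite{garcia-gothen-mundet:2008}) --- so your argument has to stand on its own. Its skeleton is sound: the long exact sequence for a two-term complex, the trivial direction, the vanishing of $\mathbb{H}^0(C^{\bullet}(E_{H},\varphi)_{\mu})$ for $\mu\neq 0$ by simplicity, and above all the recognition that formal arguments only leave $\ker(ad(\varphi))$ and $\coker(ad(\varphi))$ acyclic but possibly nonzero. However, the first of your two devices for closing that gap is false: kernel and cokernel of the \emph{same} map are not a Serre-dual pair. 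The $\Ad$-invariance of $B$ identifies $\bigl((E_{H}(\liehc))_{\mu}\bigr)^*\cong (E_{H}(\liehc))_{-\mu}$ and $\bigl((E_{H}(\mclie))_{\nu}\bigr)^*\cong (E_{H}(\mclie))_{-\nu}$, so the transpose of $ad(\varphi)\colon (E_{H}(\liehc))_{\mu}\to (E_{H}(\mclie))_{\mu+1}\otimes K$ is, up to sign and a twist by $K$, the map $ad(\varphi)\colon (E_{H}(\mclie))_{-\mu-1}\to (E_{H}(\liehc))_{-\mu}\otimes K$. Duality therefore exchanges weight $\mu$ with weight $-\mu-1$ and swaps the roles of $\liehc$ and $\mclie$; it never pairs $\ker$ with $\coker$ within a single weight. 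Indeed $\rk(\ker)-\rk(\coker)=\rk (E_{H}(\liehc))_{\mu}-\rk (E_{H}(\mclie))_{\mu+1}$, which is nonzero in general (already the total dimensions differ: for $\U(p,q)$ with $p\neq q$ one has $\dim\liehc=p^2+q^2\neq 2pq=\dim\mclie$), so no such duality can exist and ``reducing the two vanishings to one'' is unavailable.

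The second device fails more seriously, and this is the genuine gap: Euler-characteristic matching cannot force $\coker(ad(\varphi))=0$. Once $\ker(ad(\varphi))=0$ and all $\mathbb{H}^i$ vanish, what you know about $Q=\coker(ad(\varphi))$ is exactly that it is torsion-free with $H^0(Q)=H^1(Q)=0$, i.e.\ $\deg Q=\rk Q\,(g-1)$; a generic line bundle of degree $g-1$ has all of these properties, so no bookkeeping of ranks and degrees of the Hodge summands can exclude $Q\neq 0$. This is precisely the ``acyclic but nonzero'' trap you yourself flagged for the kernel, and the cokernel needs the same geometric treatment, not a numerical one. The mechanism that actually finishes both halves is semistability of the \emph{adjoint} Higgs bundle: since $(E_{H},\varphi)$ is polystable, $(E_{H}(\liegc),ad(\varphi))$ is a semistable Higgs bundle of degree zero, so every $ad(\varphi)$-invariant subsheaf has degree $\leq 0$. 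The kernel is such a subsheaf, and it is its acyclicity --- not its mere nonvanishing, as you assert; invariant subsheaves of nonpositive degree are perfectly allowed --- that forces the contradictory degree $\deg(\ker)=\rk(\ker)(g-1)>0$ when $g\geq 2$. For the cokernel, $Q^*\otimes K$ embeds, by the duality above, as the kernel of $ad(\varphi)$ on $(E_{H}(\mclie))_{-\mu-1}$, hence is again an $ad(\varphi)$-invariant subsheaf of positive degree $\rk Q\,(g-1)$: contradiction. The same input is needed at your $\mathbb{H}^2$ step, whose Serre dual consists of sections of $(E_{H}(\mclie))_{-\mu-1}$ commuting with $\varphi$; these are not infinitesimal automorphisms of the $G$-Higgs bundle (those are $\liehc$-valued), so simplicity alone does not kill them --- the adjoint-bundle argument, or the smooth-point hypothesis, does.
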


In cases where the moduli space $\mathcal{M}_d(G)$ has singularities, it can happen that fixed points of the $S^1$-action of the function $f$ lie in the singular locus.  To determine whether the resulting Hodge bundles are local minima for the function $f$, one must construct families of deformations by hand and examine the behavior of the function along the deformations.  In Section \ref{subs:goodenough} we illustrate these ideas with some specific examples.

\subsection{Best-case scenarios}\label{subs:best}   In this section we discuss some situations in which the full Morse theory program can be carried out using the Hitchin function $f$.  

As noted in Section \ref{subs:modspaces}, for $G=\GL(n,\C)$, the moduli spaces $\mathcal{M}_d(G)$ are sometimes smooth.   The $\GL(n,\C)$-Higgs bundles can be represented by pairs $(V,\varphi)$ where $V$ is a rank $n$ holomorphic bundle, the Higgs field $\varphi:V\rightarrow V\otimes K$, and the topological class $d$ is the degree of $V$. In order for such Higgs bundles to be polystable but not stable, the bundle $V$ has to split as a direct sum of lower rank vector bundles with the same slope as $V$.  This cannot happen if the rank and degree of $V$ are coprime, i.e. $(n,d)=1$. In \cite{Hitchin87} Hitchin analyzed the case of $n=2$ and odd degree and in \cite{gothen94} Gothen examined the rank three case. 

Fix $d$ such that $(3,d)=1$ and let $\mathcal{M}_d$ denote the moduli space of stable $\GL(3,\C)$-Higgs bundles for which the underlying bundle has degree $d$.  Let $f:\mathcal{M}_d\rightarrow\R$ be the Hitchin function as defined in \eqref{hitchinfunction}.  The critical points of $f$, or equivalently the fixed points of the $S^1$-action, can be of either type discussed in Section \ref {subs:cpts}.  

The critical points with $\varphi=0$ are zeros of the non-negative function $f$ and hence clearly global  minima.  Moreover, if $\mathcal{N}_0$ denotes the locus of such minima, then we can identify

\begin{equation}
\mathcal{N}_0\simeq M(3,d)
\end{equation}

\noi where $M(3,d)$ denotes the moduli space of stable bundles of rank 3 and degree $d$.  The Morse index of $\mathcal{N}_0$ is zero (since the critical points are global minima) and the Poincare polynomial for $M(3,d)$ is known by the work of Desale and Ramanan (\cite{desaleRamanan}).  We can thus compute the contribution of $\mathcal{N}_0$ to the Poincare polynomial of $\mathcal{M}_d$.

The critical points with $\varphi\ne 0$ are described as follows. As explained in Example \ref{ExGln}  the corresponding fixed points of the $S^1$-action are represented by Hodge bundles, say $(V,\varphi)$. Since $n=3$, the possibilities are limited to the three types in Table 3. The possibilities within this set of types are constrained by the topological type of $V$, i.e.
\begin{equation}
\deg(L)+\deg(V_i)=\deg(L_1)+\deg(L_2)+\deg(L_3)=d\ .
\end{equation}

\noi Moreover,  the Hodge bundles must represent points in the moduli space $\mathcal{M}_d$.  They must thus represent {\it stable} Higgs bundles or, equivalently, must support solutions to the Higgs bundle equation \eqref{Higgsequations1}.  This puts further constraints on the degrees of the summands. For example (see \cite{gothen94}) for the Hodge bundles of type $(1,2)$ the constraint is that

\begin{equation}
3d < \deg(L) < \frac{1}{3}d + g- 1.
\end{equation}

\begin{table}
\begin{tabular}{|c|c|c|c|}
\hline
type& $V$& ranks & $\varphi$\\
\hline
$(1,2)$ & $L\oplus V_2$ & $\rank(L)=1,\rank(V_2)=2$ & $\begin{pmatrix}0&0\\\varphi&0\end{pmatrix}$\\
\hline
$(2,1)$ & $V_1\oplus L$ &  $\rank(V_1)=2, \rank(L)=1$ & $\begin{pmatrix}0&0\\\varphi&0\end{pmatrix}$\\
\hline
$(1,1,1)$& $L_1\oplus L_2\oplus L_3$ & $\rank(L_i)=1$ & $\begin{pmatrix}0&0&0\\\varphi_1&0&0\\0&\varphi_2&0\end{pmatrix}$\\
\hline
\end{tabular}
\caption{\small{Hodge bundles defining critical points of the $S^1$-action on the moduli space of $\SL(3,\C)$-Higgs bundles}}
\end{table}

Having identified the forms of all possible critical points, what remains is to

\begin{enumerate}
\item describe the critical submanifolds and compute their Poincare polynomials, and
\item compute the Morse indices for each of the critical submanifolds
\end{enumerate}

\begin{remark}This is the place where the cases $n\ge 3$ become much more difficult than the cases $n=2$ and $n=3$. In particular, the main obstacle to progress is the description of the critical submanifolds and the computation of their Poincare polynomials.
\end{remark}

\noi We describe briefly how Gothen completed the Morse theory program in the case $n=3$, giving only enough details to highlight some of the geometry.  For full details we refer the reader to \cite{gothen94}.  The main idea is that the critical submanifolds can be identified as moduli spaces in their own right and for which the Poincare polynomials can be computed. 

Consider the Hodge bundles of type $(1,2)$.  The component $\varphi$ in the Higgs field is a holomorphic map 

\begin{equation}
\varphi:L\rightarrow V K
\end{equation} 

\noi or, equivalently, an element of the space of holomorphic sections $H^0(VL^{-1}K)$.  Denoting $\deg{L}$ by $l$, we see that these Hodge bundles are determined by pairs $(\mathcal{V},\varphi)$ where $\mathcal{V}$ is a rank three bundle of degree $d-l-(2g-2)$ and $\varphi\in H^0(\mathcal{V})$.  Just as for Higgs bundles, moduli spaces for pairs of this sort may be constructed if one introduces a suitable notion of stability. The appropriate definition depends on a real parameter (see \cite{bradlow91}) and the resulting moduli spaces are non-empty provided the parameter lies in a bounded range determined by the rank and degree of the bundle.   For rank 2 bundles the stability of a pair $(\mathcal{V},\varphi)$ can be formulated as a condition on line subbundles $\mathcal{L}\subset\mathcal{V}$, namely

\begin{equation}
\deg(\mathcal{L})< \begin{cases}\frac{v}{2}-\sigma\quad if\ \Phi\in H^0(\mathcal{L})\\
 \frac{v}{2}+\sigma\quad otherwise\end{cases}
\end{equation} 

\noi where $v=\deg(\mathcal{V})$ and $\sigma$ is a real parameter. If the inequalities are allowed to be equalities then the pair is $\sigma$-semistable.  The allowed range for $\sigma$ is

\begin{equation}\label{sigmarange}
0\le\sigma\le\frac{v}{2} \ .
\end{equation}

\noi This range is partitioned at the values of $\sigma$ for which  $\frac{v}{2}\pm\sigma$ is integral\footnotemark\footnotetext{Thus at integer or half-integer values, depending on whether $\deg{V}$ is even or odd}.  Except for values of $\sigma$ at these partition points, the moduli spaces of $\sigma$-stable pairs, say $N_{\sigma}(2,v)$, are smooth projective varieties. In fact the moduli spaces depend only on the subinterval containing $\sigma$, so there is a discrete family of distinct moduli spaces labelled by the subintervals $(\frac{v}{2}-i-1,\frac{v}{2}-i)$ (for $i=0,1,\dots i_{max}$).  

It is not hard to show (see \cite{gothen94}) that the Hodge bundle $(L\oplus V,\varphi)$ is stable if and only if the corresponding pair $(\mathcal{V},\varphi)$ is $\sigma$-stable for a specific value of $\sigma$, namely

\begin{equation}\label{sigma}
\sigma=-\frac{d}{6}+\frac{l}{2}
\end{equation}

\noi We thus get a precise relationship between the critical points of type $(1,2)$ and a moduli space of $\sigma$-stable pairs  induced by the map
\begin{equation}
(L\oplus V,\varphi)\mapsto (VL^{-1}K,\varphi)
\end{equation}

\noi It remains to determine the Poincare polynomial for the moduli space $N_{\sigma}(2,v)$ where $v=d-l+2g-2$ and $\sigma$ is given by \eqref{sigma}.  This can be done by adapting the work of Thaddeus (\cite{Thaddeus94}).  The moduli space corresponding to the largest subinterval in the range for $\sigma$ is the easiest to describe and the others can be related to it by  `flips', i.e. explicit transformations whose effect on the Poincare polynomial can be computed. 

The Hodge bundles of type $(2,1)$ can be analysed in essentially the same way, while those of type $(1,1,1)$ can be described as finite coverings of products of Jacobian varieties.   The Morse indices at each of the critical points are calculated using their relation to the weight spaces for the $S^1$ action (see Proposition \ref{wtspaces}). 

Putting everything together, Gothen computes the full Poincare polynomial for $\mathcal{M}_d$ for any $d$ coprime to 3 (see \cite{gothen94}, Theorem 1.2).

\begin{remark}For $\GL(n,\C)$-Higgs bundles with $n>3$, if the degree of the bundle is coprime to $n$ then the moduli space $\mathcal{M}_d(\GL(n,\C))$ is smooth,  the Morse indices for the critical submanifolds of the Hitchin function can be computed, and the submanifolds can be described as moduli spaces in their own right.  The correspond to moduli spaces of quiver bundles where the quivers describe the structure of the possible Hodge bundles that occur at fixed points of the $S^1$-action.  Unfortunately, the Poincare polynomials of these spaces are not known.  
\end{remark}

\subsection{Good enough for $\pi_0$} \label{subs:goodenough}

The success of the full Morse theory program for moduli spaces of $\GL(3,\C)$-Higgs bundles depends on the smoothness of the moduli spaces and the fact that the critical submanifolds can be described and have computable Poincare polynomials. In this section we discuss examples of $G$-Higgs bundles that fall short of these requirements but in which we can at least compute the first topological invariant, namely the number of connected components.  In these cases $G$ is a non-compact real reductive group, so the $G$-Higgs bundles are as described in Section \ref{GHiggs}.  Many specific examples have been examined (see \cite{arroyo},\cite{bradlow-garciaprada-gothen2003},\cite{bradlow-garciaprada-gothen2004},\cite{garcia-gothen-mundet:2008},\cite{garcia-prada-mundet:2004},\cite{garcia-prada-oliveira},\cite{gothen01},\cite{Hitchin87, Hitchin92})

We discuss just one case, namely $G=\Sp(2n,\R)$.  We pick this illustrative example because $\Sp(2n,\R)$ is both a split real form and also has the property that the associated homegeneous space $G/H$ is a Hermitian symmetric space.  Moreover, since $\Sp(2n,\R)$ is semisimple, we can invoke the identification of \eqref{M=R} to identify the Higgs bundle moduli spaces with moduli spaces of surface group representations in $\Sp(2n,\R)$.

For $G=\Sp(2n,\R)$ we have seen in Example \ref{Ex;Sp2nR} that an $\Sp(2n,\R)$-Higgs bundle on $X$ consists of a triple $(V,\beta,\gamma)$ where $V$ is a rank $n$ holomorphic bundle on $X$ and the components of the Higgs field are symmetric maps
\begin{align}
\beta:V^*\rightarrow V\otimes K\\
\gamma:V\rightarrow V^*\otimes K
\end{align}
The topological invariant is $d=\deg(V)$.  The definitions of stability and polystability for $\Sp(2n,\R)$, given in full in \cite{garcia-gothen-mundet:2008}, amount to an inequality on the degrees of subbundles in 2-step filtrations of $V$ that are compatible with the Higgs field.  For polystable $\Sp(2n,\R)$-Higgs bundles the invariant $d$ has to satisfy the bound:

\begin{equation}\label{MW}
0\le |d|\le n(g-1)
\end{equation} 

\noi For each value of $d=\deg(V)$ in the range \eqref{MW}, we get a moduli space of polystable $\Sp(2n,\R)$-Higgs bundles, denoted by $\mathcal{M}_d(\Sp(2n,\R))$. These spaces are complex algebraic varieties of dimension $(g-1)(2n^2+n)$ but in general they have singularities (at decomposable $\Sp(2n,\R)$-Higgs bundles). This is the first obstacle to full implementation of the Morse theory program.  We can nevertheless describe the $\Sp(2n,\R)$-Hodge bundles, i.e. the critical points of the $S^1$-action on $\mathcal{M}_d(\Sp(2n,\R))$ given by \eqref{S1action}, and identify the minima.   

The $\Sp(2n,\R)$-Higgs bundle equation becomes

\begin{equation}\label{Speqtn}
F_A +\beta\beta^*-\gamma^*\gamma=0
\end{equation}

\noi where $A$ is a connection on $V$ and the adjoints on $\beta,\gamma$ are taken with respect to a fixed metric on $V$.  The Hitchin function is given in this situation by

\begin{equation}\label{SpHitchin1}
f(V,\beta,\gamma)=||\beta||^2+||\gamma||^2
\end{equation}

\noi where the $L^2$ norms are computed with respect to the fixed metric on $V$.  Combining \eqref{Speqtn} and \eqref{SpHitchin1} yields

\begin{equation}\label{SpHitchin2}
f(V,\beta,\gamma)=-d+2||\beta||^2 = d+2||\gamma||^2
\end{equation}

\noi Define the loci $\mathcal{N}_d\subset\mathcal{M}_d(\Sp(2n,\R))$ by

\begin{equation}
\mathcal{N}_d = \{(V,\beta,\gamma)\in\mathcal{M}_d(\Sp(2n,\R))\ | \beta=0\ if\ d\le 0, or\ \gamma=0\ if\ d\ge 0\}
\end{equation}

\noi It follows from \eqref{SpHitchin2} that $\mathcal{N}_d$ is a subset of $f_{min}$, the locus of global minima of $f$.  It does not follow - and in fact is not always true - that all global minima lie in $\mathcal{N}_d$.  

The Hodge bundles are as in Definition \ref{defn:SpHodge}, with further constraints  imposed by stability on the ranks and degrees (see Corollary 6.6 in \cite{garcia-gothen-mundet:2008}).  Using the criterion described in Section \ref{subs:min}, we thus get the following characterization of the minima. 

\begin{proposition}\label{prop:smoothmin}  Let $(V,\beta,\gamma)$ be a fixed point of the $S^1$ action which lies in the smooth locus of $\mathcal{M}_d(\Sp(2n,\R))$.
\begin{enumerate}
\item If $|d|< n(g-1)$ then $(V,\beta,\gamma)\in f_{min}$ if and only if $(V,\beta,\gamma)\in \mathcal{N}_d$.
\item If $d=-n(g-1)$ then $(V,\beta,\gamma)\in f_{min}$ if and only if $(V,\beta,\gamma)\in \mathcal{N}_d$ or it is a Hodge bundle with
\begin{equation}
V=\begin{cases} \bigoplus_{j=-q}^qL^{-1}K^{-2j}\ if\ n=2q+1\\
\bigoplus_{j=-q}^{q+1}LK^{-2j}\quad if\ n=2q+2\ne 2
\end{cases}
\end{equation}

\noi where $L$ is a line bundle such that $L^2=K$ and, with respect to this decomposition for $V$ and the corresponding decomposition of $V^*$, 

\begin{equation}
\beta=\begin{pmatrix}0&\dots&0&1\\
\vdots&\ddots&\ddots&0\\
0&1&\ddots&\vdots\\
1&0&\dots&0 \end{pmatrix}\quad and\quad \gamma=\begin{pmatrix}0&\dots&0&0\\
\vdots&\ddots&\ddots&1\\
0&0&\ddots&\vdots\\
0&1&\dots&0 \end{pmatrix}
\end{equation}
\item If $d=n(g-1)$ the result is the same as for $d=-n(g-1)$ but with $(V,\beta,\gamma)$ replaced by $(V^*,\gamma^t,\beta^t)$.
\end{enumerate} 
\end{proposition}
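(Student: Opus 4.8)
The plan is to reduce the identification of $f_{min}$ to the index criterion of Section~\ref{subs:min} and then to run the resulting bundle computation weight by weight, using the explicit $\Sp(2n,\R)$-structure recorded in Example~\ref{Ex;Sp2nR}. The sufficiency (``if'') direction is the easy one. On $\mathcal{N}_d$ the identity \eqref{SpHitchin2} gives $f\equiv|d|$, while the same identity shows $f\ge|d|$ on all of $\mathcal{M}_d(\Sp(2n,\R))$; hence every point of $\mathcal{N}_d$ realizes the absolute lower bound of $f$ and lies in $f_{min}$. This disposes of the first alternative in all three cases. For the extra Hodge bundles appearing in (2) and (3) I would instead invoke the index computation below: they are index-zero critical points, so by properness of $f$ (Theorem~\ref{th:proper}) they realize the minimum of $f$ on the connected components they occupy, components which do not meet $\mathcal{N}_d$.

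For the necessity (``only if'') direction, first note that a smooth point of $f_{min}$ is in particular a local minimum, so by Proposition~\ref{smoothmin} together with the criterion that \eqref{hitchincriterion} be an isomorphism, the map
\[
ad(\varphi)\colon (E_H(\liehc))_\mu \lra (E_H(\mclie))_{\mu+1}\otimes K
\]
is an isomorphism of sheaves for every $\mu>0$. Feeding in the Hodge decomposition $V=\bigoplus_\lambda V_\lambda$ of Definition~\ref{defn:SpHodge}, the identifications $E_H(\liehc)=\End(V)$ and $E_H(\mclie)=\Sym^2(V)\oplus\Sym^2(V^*)$, and the weight formulas of Example~\ref{Ex;Sp2nR}, this unwinds to the requirement that the two off-diagonal operators $a\mapsto a\beta+(a\beta)^t$ and $a\mapsto \gamma a+(\gamma a)^t$ assemble into an isomorphism on each positive-weight piece. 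I would then read off the equalities of ranks and of degrees that an isomorphism of bundles over $X$ imposes, sum the degree relations over all $\mu>0$, and compare with the stability constraints on the ranks and degrees of the $V_\lambda$ (Corollary~6.6 of \cite{garcia-gothen-mundet:2008}, which encode the Milnor--Wood bound \eqref{MW}). Combined with Riemann--Roch, these should show that $\beta\ne0$ and $\gamma\ne0$ can hold simultaneously at a minimum only when \eqref{MW} is saturated, i.e.\ $|d|=n(g-1)$. This yields (1): for $|d|<n(g-1)$ one of the fields must vanish, so the point lies in $\mathcal{N}_d$.

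The boundary cases $d=\pm n(g-1)$ are where I expect the real work to be. There the summed degree relations hold with equality rather than as strict inequalities, so the isomorphism conditions no longer kill a field but instead rigidify the decomposition: tracking them weight by weight should force every $V_\lambda$ to be a line bundle and force the operators induced by $\beta$ and $\gamma$ between consecutive summands to be isomorphisms, which pins each $V_\lambda$ down as a fixed twist of a square root $L$ of $K$ and produces the normal forms for $\beta$ and $\gamma$ displayed in (2). Case (3) then follows from (2) by the duality $(V,\beta,\gamma)\mapsto(V^*,\gamma^t,\beta^t)$, which exchanges $\beta$ and $\gamma$ and sends $d$ to $-d$. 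The main obstacle is precisely this boundary analysis---converting the saturated degree identities into the explicit bundle-theoretic normal form---which is in effect the Cayley correspondence for maximal $\Sp(2n,\R)$-Higgs bundles and is considerably more delicate than the generic estimate.
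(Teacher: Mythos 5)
Your proposal follows essentially the same route as the paper's: the inclusion $\mathcal{N}_d\subset f_{min}$ is exactly the paper's observation from \eqref{SpHitchin2}, and the converse is obtained, as in the paper, by applying the local-minimum criterion of Proposition \ref{smoothmin} and \eqref{hitchincriterion} weight by weight to the Hodge bundles of Definition \ref{defn:SpHodge}, with the rank/degree bookkeeping and the stability constraints (Corollary 6.6 of \cite{garcia-gothen-mundet:2008}) isolating the saturated case $|d|=n(g-1)$. The paper itself defers that detailed boundary computation to \cite{garcia-gothen-mundet:2008}, so your plan matches both its strategy and its level of detail.
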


\begin{remark}\label{remark:n=2} In the case $n=2$ and $|d|=2g-2$, i.e. for maximal $\Sp(4,\R)$-Higgs bundles, there are additional possibilities for the Hodge bundles, as described below in Section \ref{subsub:4}. In the case $n=1$ we have $\Sp(2,\R)\simeq\SL(2,\R)$, a case analyzed by Hitchin in\cite{Hitchin87}.  
\end{remark}

This result is not sufficient to identify all minima because fixed points of the $S^1$-action can occur at singular points.  One has to examine `by hand' whether such critical points are minima of the Hitchin functional. In the case of $\Sp(2n,\R)$-Higgs bundles, generalizing Hitchin's results in \cite{Hitchin87} for $\SL(2,\R)$, one can explicitly construct paths in $\mathcal{M}_d(\Sp(2n,\R))$ through such Hodge bundles and along which the Hitchin function is decreasing. It follows that the minima all lie in the smooth locus and hence are all of the form given in Proposition \ref{prop:smoothmin}.

Having characterized the minima of the Hitchin function on the spaces $\mathcal{M}_d(\Sp(2n,\R))$, we can now - in principle - use Theorem \ref{th:proper} to investigate the number of connected components. 

In the case $|d|<n(g-1)$, Proposition \ref{prop:smoothmin} says that we need to examine only the space $\mathcal{N}_d$.  The objects in this space are so-called quadratic pairs $(V,q)$ where $q$ is a (possibly degenerate) quadratic form on $V$ or $V^*$ with values in the line bundle $K$. A good notion of stability can be formulated for such objects and moduli spaces can be constructed (see \cite{garcia-gothen-mundet:2008}). At present the only case for which the connectedness of these spaces is known is the case of rank two quadratic pairs, in which case they are connected (see \cite{garcia-gothen-mundet:2008} and \cite{gothen-oliveira}) and we get

\begin{proposition} The moduli spaces $\mathcal{M}_d(\Sp(4,\R))$ are connected for all $|d|<2g-2$
\end{proposition}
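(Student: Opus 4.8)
The plan is to run the standard Morse-theoretic reduction: use the properness of the Hitchin function to bound the number of connected components of $\mathcal{M}_d(\Sp(4,\R))$ by the number of connected components of its minimum locus, and then identify that locus with a moduli space of rank two quadratic pairs whose connectedness is already known.

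First I would invoke Theorem \ref{th:proper}. Since $f$ is proper and bounded below, its restriction to any connected component $C$ of $\mathcal{M}_d(\Sp(4,\R))$ attains a minimum; such a point is a local minimum of $f$ on the whole space and hence lies in the global minimum locus $f_{min}$. Thus every connected component of $\mathcal{M}_d(\Sp(4,\R))$ meets $f_{min}$. Since $f_{min}$ is nonempty, if it is moreover connected then it is contained in a single component $C_0$; but every component meets $f_{min}\subseteq C_0$ and distinct components are disjoint, so every component equals $C_0$ and $\mathcal{M}_d(\Sp(4,\R))$ is connected. This reduces the problem to showing that $f_{min}$ is connected.

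Next I would identify $f_{min}$. Because we are in the range $|d|<2g-2=n(g-1)$ with $n=2$, Proposition \ref{prop:smoothmin}(1) shows that every fixed point of the $S^1$-action lying in the smooth locus is a minimiser of $f$ if and only if it lies in $\mathcal{N}_d$. It remains to rule out minima at the singular fixed points; following Hitchin's construction for $\SL(2,\R)$, one produces explicit paths through any such singular Hodge bundle along which $f$ strictly decreases, so no singular fixed point is a local minimum. Together these give $f_{min}=\mathcal{N}_d$. Finally, for $\Sp(4,\R)$ the space $\mathcal{N}_d$ consists of triples with $\gamma=0$ (for $d\ge 0$) or $\beta=0$ (for $d\le 0$), i.e. of pairs $(V,q)$ with $V$ of rank two and $q$ a (possibly degenerate) $K$-valued quadratic form, which is precisely a moduli space of rank two quadratic pairs. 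The connectedness of this space is the content of the results of \cite{garcia-gothen-mundet:2008} and \cite{gothen-oliveira}, and invoking it completes the argument.

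The main obstacle is this last step: the reduction preceding it is formal Morse theory, but the connectedness of the moduli space of rank two quadratic pairs is a genuinely nontrivial input, resting on a separate stability analysis and component count for these pairs, and it is exactly why the statement is presently limited to $n=2$. A secondary point requiring care is the \emph{by hand} verification that the singular fixed points are never local minima, which must be carried out via explicit deformations rather than the Hessian criterion of Proposition \ref{smoothmin}, since that criterion applies only on the smooth locus.
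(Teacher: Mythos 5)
Your proposal is correct and takes essentially the same route as the paper: properness of the Hitchin function (Theorem \ref{th:proper}) reduces connectedness to connectedness of the minimal locus, Proposition \ref{prop:smoothmin}(1) combined with the ``by hand'' exclusion of singular fixed points identifies that locus with $\mathcal{N}_d$, which is the moduli space of rank two quadratic pairs, whose connectedness is quoted from \cite{garcia-gothen-mundet:2008} and \cite{gothen-oliveira}. The one phrasing to tighten is your first step's inference ``local minimum, hence in the global minimum locus $f_{min}$'': a priori a component could fail to meet the global minima, so the reduction should be run with the set of local minima (which your later identification $f_{min}=\mathcal{N}_d$ shows coincides with $\mathcal{N}_d$, exactly as in the paper).
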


\begin{remark} It is reasonable to conjecture that for all $n$ and all $0\le |d| <n(g-1)$ the moduli spaces $\mathcal{M}_d(\Sp(2n,\R))$ are connected.
\end{remark}

\noi The situation is better for extremal values of $d$, i.e. in the cases where $d=0$ or $|d| =n(g-1)$.  In the case $d=0$, the critical points all have $\beta=\gamma=0$ and we can identify $f_{min}=\mathcal{N}_0$ with $M(n,0)$, the moduli space of polystable rank $n$ vector bundles.  Since $M(n,0)$ is known to be connected, so therefore is $\mathcal{M}_0(\Sp(2n,\R))$.

In the case $|d| =n(g-1)$ we see two new phenomena:

\begin{enumerate}
\item Multiple components.  The two cases in parts (2) and (3) of Proposition \ref{prop:smoothmin} already show that the moduli spaces $\mathcal{M}_{\pm n(g-1)}(\Sp(2n,\R)$ have multiple components. In fact, there are at least as many components of the second type as there are choices for the line bundle $L$, i.e. for a square root of $K$. We discuss this in Section \ref{subsub:cayley}
\item Teichmuller components. In the case of $\Sp(2,\R)=\SL(2,\R)$, it is well known (see \cite{Goldman88} and also \cite{Hitchin87}) that some\footnotemark\footnotetext{For a surface of genus $g$ the number is $2^{2g}$, to be precise.} of the components of $\mathcal{M}_{g-1}(\Sp(2,\R)$ can be identified with Teichmuller space. For $n\ge 2$ a subset of the extra maximal components share key properties similar to Teichmuller space and are referred to as `higher Teichmuller' or Hitchin components.  We discuss this in Section \ref{subsub:teich}
\end{enumerate}

\subsubsection{Cayley transform and new invariants. } \label{subsub:cayley}

A distinguishing feature of polystable $\Sp(2n,\R)$-Higgs bundles for which $|d|=n(g-1)$ is that one of the two parts of the Higgs field has to have maximal rank as a bundle endomorphism (see Proposition 3.22 of \cite{garcia-gothen-mundet:2008}).  For example, if $(V,\beta,\gamma)$ is polystable with $\deg(V)=n(g-1)$ then $\gamma:V\rightarrow V^*\otimes K$ is an ismorphism.\footnotemark\footnotetext{If $d=-n(g-1)$ then $\beta:V^*\rightarrow V\otimes K$ must be an isomorphism.}  This is manifestly true for the Hodge bundles described in parts (2) and (3) of Proposition \ref{prop:smoothmin} in which both $\beta$ and $\gamma$ are non-trivial but applies equally to other Hodge bundles or the Higgs bundles which are not critical points.  Notice that for all such Higgs bundles, say with positive maximal $d$, the map $\gamma$ is a symmetric non-degenerate bundle map. After twisting by a line bundle $K^{-1/2}$ (i.e. a square root of  $K^{-1}$), we thus get a bundle $W=VK^{-1/2}$ with a symmetric, non-degenerate map 

\begin{equation}
q=\gamma\otimes 1_{K^{-1/2}}:W\rightarrow W^*
\end{equation}

\noi The pair $(W,q)$ defines an orthogonal bundle, i.e. a bundle with structure group $\mathrm{O}(n,\C)$. 
As a result of this emergence of a new structure group new characteristic classes, namely the Stiefel-Whitney classes for the orthogonal bundle, separate $\mathcal{M}_{n(g-1)}(\Sp(2n,\R)$ into components with fixed values of the new classes. 
For orthogonal bundles over a Riemann surface $X$ there are two Stiefel-Whitney classes with values in $H^1(X,\Z_2)$ and $H^2(X,\Z_2)$ respectively, and thus a total of $2.2^{2g}$ possibilities.  Notice, furthermore that in each of these components the Higgs bundles in the minimal submanifold $\mathcal{N}_{n(g-1)}$ have $\beta=0$ and are thus fully determined by the data defining the orthogonal bundles.  This means that $\mathcal{N}_{n(g-1)}$ can be identified with the moduli space of polystable principal $\mathrm{O}(2n,\C)$-bundles and hence, by the results of \cite{ramanathan:1996}, is connected. In addition, there are the components on which the minima are single points represented by the Hodge bundles of the type described in parts (2) and (3) of Proposition \ref{prop:smoothmin}.  Since there are $2^{2g}$ choices for the line bundle $L$ (i.e. for the square root of $K$), we get finally:

\begin{theorem}[\cite{garcia-gothen-mundet:2008}, Theorem 7.14] On a genus $g$ closed Riemann surface,  $\mathcal{M}_{\pm n(g-1)}$ has $3.2^{2g}$ connected components if $n\ge 3$.
\end{theorem}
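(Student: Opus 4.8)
The plan is to count components through the Hitchin function $f=\|\beta\|^2+\|\gamma\|^2$, exploiting the reduction set out after Theorem \ref{th:proper}: since $f$ is proper it attains a minimum on every connected component of $\mathcal{M}_{n(g-1)}(\Sp(2n,\R))$, so the number of components is computed by identifying the minimal critical submanifolds and showing each is connected. Equivalently, I would establish $\pi_0(\mathcal{M}_{n(g-1)})\cong\pi_0(f_{min})$, where $f_{min}$ is the locus of minima: properness gives the surjection $\pi_0(f_{min})\to\pi_0(\mathcal{M}_{n(g-1)})$, and injectivity will follow once the distinct pieces of $f_{min}$ are separated by locally constant invariants on the whole moduli space. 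By the symmetry $(V,\beta,\gamma)\mapsto(V^*,\gamma^t,\beta^t)$ exchanging $d$ with $-d$, it is enough to treat $d=n(g-1)$.

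First I would pin down $f_{min}$ exactly. For fixed points in the smooth locus this is Proposition \ref{prop:smoothmin}: the minima are either points of $\mathcal{N}_{n(g-1)}$ (where $\beta=0$) or the exceptional Hodge bundles of parts (2)/(3), and this is precisely where the hypothesis $n\ge 3$ is used, since for $n=2$ there are further Hodge-bundle types (Remark \ref{remark:n=2}) that would otherwise have to be included. For fixed points in the singular locus one must argue by hand, generalizing Hitchin's $\SL(2,\R)$ computation, that each such Hodge bundle admits an explicit path along which $f$ strictly decreases; this rules out spurious minima hidden in the singular set and shows every minimum lies in the smooth locus, so that $f_{min}$ is exactly the union of $\mathcal{N}_{n(g-1)}$ with the finite set of exceptional Hodge bundles.

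Next I would count and connect. Because $\gamma$ is a symmetric isomorphism for every maximal Higgs bundle, the Cayley transform $W=VK^{-1/2}$, $q=\gamma\otimes 1$ produces an $\mathrm{O}(n,\C)$-bundle globally over $\mathcal{M}_{n(g-1)}$, so its two Stiefel-Whitney classes $w_1\in H^1(X,\Z_2)$ and $w_2\in H^2(X,\Z_2)$ are locally constant invariants, taking $2\cdot 2^{2g}$ values. On $\mathcal{N}_{n(g-1)}$ one has $\beta=0$, so the Higgs bundle is determined by the orthogonal bundle alone; restricted to each fixed $(w_1,w_2)$, the locus $\mathcal{N}_{n(g-1)}$ is then identified with a moduli space of polystable orthogonal bundles, which is connected by Ramanathan \cite{ramanathan:1996}. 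This yields $2\cdot 2^{2g}$ connected pieces of $f_{min}$. The remaining minima are the exceptional Hodge bundles, each a single point indexed by a choice of square root $L$ of $K$; there are $2^{2g}$ such theta characteristics, contributing $2^{2g}$ isolated components. Hence $f_{min}$ has $2\cdot 2^{2g}+2^{2g}=3\cdot 2^{2g}$ connected components, each connected, which gives the theorem.

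The hard part will be confirming both that the list of minima is complete and that these $3\cdot 2^{2g}$ pieces are genuinely distinct components of $\mathcal{M}_{n(g-1)}$ rather than fragments of larger ones. This splits into two delicate inputs: the analytic by-hand step excluding minima in the singular locus, which requires constructing deformation paths through each singular Hodge bundle and checking monotonicity of $f$; and the topological separation, where the Stiefel-Whitney classes cleanly separate the orthogonal families, but the $2^{2g}$ exceptional (Hitchin-type) components carry orthogonal invariants as well and so must be distinguished from the $\beta=0$ locus by the finer discrete datum of $L$. Establishing the equality $\pi_0(\mathcal{M}_{n(g-1)})=\pi_0(f_{min})$ in the presence of singularities, rather than merely the inequality from properness, is the other point requiring care.
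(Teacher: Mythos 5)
Your proposal follows essentially the same route as the paper's: the properness of the Hitchin function (Theorem \ref{th:proper}) reducing $\pi_0$ to the minimum locus, the identification of that locus via Proposition \ref{prop:smoothmin} plus the by-hand exclusion of singular minima, the Cayley transform producing Stiefel--Whitney invariants that give $2\cdot 2^{2g}$ components with $\beta=0$ (each connected by Ramanathan's theorem), and the $2^{2g}$ isolated Hitchin-type minima indexed by square roots of $K$, totalling $3\cdot 2^{2g}$. The delicate points you flag --- completeness of the list of minima and the fact that the Hitchin-type components, which carry the same Stiefel--Whitney data as some $\beta=0$ components, really are separate components of $\mathcal{M}_{n(g-1)}$ --- are precisely the points the paper itself leaves to the detailed arguments of \cite{garcia-gothen-mundet:2008}.
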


Much of the above discussion reflects general features seen in those cases where the group $G$ has the property that the associated symmetric space $G/H$ is of Hermitian type. \footnotemark\footnotetext{see\cite{bradlow-garciaprada-gothen2006}  for further  examples} In that context the invariant $d$ is known as the {\bf Toledo invariant} and the bound \eqref{MW}  is called the {\bf Milnor-Wood bound}.  If $G$ is of Hermitian type then the Higgs fields for $G$-Higgs bundles with maximal Toledo invariant specialize in a way that results in the appearance of a new structure group called the {\bf Cayley partner} to $G$.  This, in turn, identifies new invariants which distinguish multiple components of $\mathcal{M}_{max}(G)$, the moduli space of polystable $G$-Higgs bundles with maximal Toledo invariant. 

Under the identification \eqref{M=R}, the $G$-Higgs bundles with maximal Toledo invariant correspond to special surface group representations in $G$.  Such maximal representations have been intensively studied by a variety of methods and are known to have special dynamical and geometric properties (see \cite{BILW05, BIW10,BIW11} for surveys). While many features are seen most easily from the point of view of surface group representations, to-date the Morse theory methods described above provide the most effective tools for counting components of the moduli spaces.

\subsubsection{Hitchin components} \label{subsub:teich} The Hodge bundles of parts (2) and (3) in Proposition \ref{prop:smoothmin} illustrate a phenomenon that can be traced back to the fact that the group $\Sp(2n,\R)$ is a {\it split real form} of a complex reductive group.  In the case of the split real form $\PSL(2,\R)$, a hyperbolic structure on $X$ determines a (Fuchsian) representation of $\pi_1(X)$ in $\PSL(2,\R)$ and this leads to the identification of a component of $\Rep(\pi_1(X),\PSL(2,\R))$ with the Teichmuller space for the surfaces of genus $g$ (see \cite{Goldman88}). In \cite{Hitchin87} Hitchin showed how to identify the components in $\mathcal{M}_{g-1}(\PSL(2,\R))$ which correspond to these representations. The minima of the Hitchin function on these components can be identified as  the $n=1$ case of the  Hodge bundles of part (2) in Proposition \ref{prop:smoothmin}.  In the case of other split real forms $G$ (such as $\SL(n,\R), \Sp(2n,\R), \SO(n,n+1),\SO(n,n)$), the Lie algebra contains principal three dimensional subgroups which define homomorphisms of $\PSL(2,\R)$ into $G$.  Composing this embedding with the Fuchsian representations leads to so-called higher Teichmuller components in $\Rep(\pi_1(X),G)$ or correspondingly in the the moduli space of $G$-Higgs bundles.  In the case of $\Sp(2n,\R)$, these are precisely the components for which the minima of the Hitchin function are as in part (2) in Proposition \ref{prop:smoothmin}. 

\begin{remark} The Hitchin components can also be identified by means of another map defined by Hitchin. Unlike the function define in Section \ref{sect:hitchinfn}, this is a map 

\begin{equation}
h:\mathcal{M}(G)\rightarrow \bigoplus_i H^0(K^{p_i})
\end{equation}
from the moduli space $\mathcal{M}(G)$ to a linear space given by $\bigoplus_i H^0(K^{p_i})$ where the powers $p_i$ are the degrees of Ad-invariant homogeneous polynomials which generate the ring of all such polynomials on the summand $\mclie\subset\lieg$. Here, as usual, $K$ is the canonical bundle on $X$ and $H^0$ denotes the space of holomorphic sections. In \cite{Hitchin92} Hitchin showed that when $G$ is a split real form, this map admits sections. The image of these sections are precisely the Hitchin components. 
\end{remark}

\subsubsection{The case of $\Sp(4,\R)$} \label{subsub:4} We end with a brief account of the special case of $G=\Sp(4,\R)$ (see \cite{bradlow-garciaprada-gothen2011}, \cite{garcia-prada-mundet:2004},  \cite{gothen01} for more details) . In this case, the Higgs bundles are defined by triples $(V,\beta,\gamma)$ where $V$ is a rank 2 bundle of degree $d$ with $0\le |d|\le 2g-2$.  For the Higgs bundles with $|d|=2g-2$, say $d=(2g-2)$ for definiteness,  the bundle in the Cayley partner is an orthogonal bundle of rank two, i.e. it has structure group $\OO(2,\C)$ and thus topological type determined by Stiefel-Whitney classes $w_1\in H^1(X,\Z_2)$ and $w_2\in H^2(X,\Z_2)$.  By a classification result of Mumford (\cite{mumford}) for such bundles:

\begin{proposition}\label{prop:O(2)bundles} The Higgs bundle $(V,\beta,\gamma)$ in $\mathcal{M}_{2g-2}(\Sp(4,\R)$ can  be taken to be one of the following types:

\begin{enumerate}
\item $V=L\oplus L^{-1}K$, where $L$ is a line bundle on $X$, and with respect to this decomposition,
\begin{equation}\label{bandg}
\gamma=\left(
\begin{smallmatrix}
0 & 1 \\
1 & 0
\end{smallmatrix}\right)\in H^0(S^2V^*\otimes K)
\quad\text{and}\quad
\beta=\left(
\begin{smallmatrix}
\beta_1 & \beta_3 \\
\beta_3 & \beta_2
\end{smallmatrix}\right)\in H^0(S^2V\otimes K).
\end{equation}
In this case the first Steifel-Whitney classes of the Cayley partner vanishes.   If  $(V,\beta,\gamma)$ is a polystable Higgs bundle then $\deg(L)$ must lie in the range $g-1\le \deg(L)\le 3g-3$ and $\beta_2\ne 0$ if $l>g-1$.
 
\item $V=\pi_*(\tilde{L}\otimes \iota^*\tilde{L}^{-1})K^{1/2}$ where
$\pi:\tilde{X}\longrightarrow X$ is a connected double cover, $\tilde{L}$ is a
line bundle on $\tilde{X}$, and $\iota:\tilde{X}\longrightarrow
\tilde{X}$ is the covering involution.  In this case the first Steifel-Whitney classes of the Cayley partner, $w_1 \in H^1(X;\Z/2)$, is the non-zero element defining the double cover.

\item $(V,\beta,\gamma)=(V_1,\beta_1,\gamma_1)\oplus (V_2,\beta_2,\gamma_2)$ where $(V_i,\beta_i,\gamma_i)$ are maximal $\Sp(2,\R)$-Higgs bundles.  In this case  the first Steifel-Whitney classes of the Cayley partner is the sum of the first Stiefel-Whitney classes for the Cayley partners to $(V_i,\beta_i,\gamma_i)$.
\end{enumerate}
\end{proposition}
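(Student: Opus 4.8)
The plan is to convert the maximal Higgs bundle into a rank-two orthogonal bundle by the Cayley transform, quote Mumford's classification of such bundles, translate the three resulting shapes back into Higgs data, and finally read off the degree constraints from polystability. First, since here $n=2$ and $d=2g-2=n(g-1)$ is maximal, polystability forces $\gamma\colon V\to V^*\otimes K$ to be an isomorphism (Proposition 3.22 of \cite{garcia-gothen-mundet:2008}, cited in Section \ref{subsub:cayley}). Twisting by $K^{-1/2}$ produces a degree-zero rank-two bundle $W=VK^{-1/2}$ carrying the symmetric nondegenerate form $q=\gamma\otimes 1_{K^{-1/2}}\colon W\to W^*$, so that $(W,q)$ is an $\OO(2,\C)$-bundle and $(V,\gamma)$ is recovered by twisting back with $K^{1/2}$; the field $\beta\in H^0(S^2V\otimes K)$ is simply carried along.

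Next I would invoke Mumford's classification, which presents every rank-two orthogonal bundle $(W,q)$ in one of three shapes according to the double cover determined by $w_1=\det W\in H^1(X,\Z/2)$: it either reduces to $\SO(2,\C)\cong\C^*$ and splits as $W=M\oplus M^{-1}$ with the hyperbolic form (so $w_1=0$); or is irreducible, in which case the nonzero $w_1$ defines a connected double cover $\pi\colon\tilde X\to X$ with involution $\iota$ and $W=\pi_*N$ for a line bundle $N$ with $\iota^*N=N^{-1}$; or splits orthogonally as $W=W_1\oplus W_2$ into (anisotropic) orthogonal line bundles. Twisting each by $K^{1/2}$ and reading off $\gamma=q\otimes 1_{K^{1/2}}$ recovers exactly the three items of the proposition: setting $L=MK^{1/2}$ turns the first case into $V=L\oplus L^{-1}K$ with $\gamma=\left(\begin{smallmatrix}0&1\\1&0\end{smallmatrix}\right)$; taking $N=\tilde L\otimes\iota^*\tilde L^{-1}$ (which automatically satisfies $\iota^*N=N^{-1}$) turns the second into $V=\pi_*(\tilde L\otimes\iota^*\tilde L^{-1})K^{1/2}$; and the third, after untwisting, is the direct sum of two maximal $\Sp(2,\R)$-Higgs bundles $(V_i,\beta_i,\gamma_i)$, each with $V_i$ a square root of $K$. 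The asserted first Stiefel-Whitney classes ($0$, the class of the cover, and $w_1(W_1)+w_1(W_2)$) then follow directly from $w_1=\det W$.

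The remaining and most substantive step is to extract the degree constraints in the first case from polystability. Here I would pass to the associated rank-four Higgs bundle $(V\oplus V^*,\Phi)$ with $\Phi=\left(\begin{smallmatrix}0&\beta\\\gamma&0\end{smallmatrix}\right)$, which has slope $0$, and test semistability against $\Phi$-invariant \emph{isotropic} (Lagrangian) subbundles, as the $\Sp(4,\R)$-stability condition requires. Writing $V=L\oplus L^{-1}K$ and decomposing $V^*=L^{-1}\oplus LK^{-1}$, the rank-two subbundle $L\oplus LK^{-1}$ is Lagrangian, and a direct computation shows it is $\Phi$-invariant precisely when the component $\beta_2\in H^0(L^{-2}K^3)$ of $\beta$ vanishes. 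Its degree equals $2\deg(L)-(2g-2)$, so whenever $\beta_2=0$ semistability forces $\deg(L)\le g-1$; contrapositively $\deg(L)>g-1$ forces $\beta_2\ne0$. Since a nonzero $\beta_2$ requires $\deg(L^{-2}K^3)\ge0$, i.e. $\deg(L)\le 3g-3$, and since the relabelling $L\leftrightarrow L^{-1}K$ lets one normalise $\deg(L)\ge g-1$, one obtains the range $g-1\le\deg(L)\le 3g-3$ together with the implication $\deg(L)>g-1\Rightarrow\beta_2\ne0$.

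I expect the stability bookkeeping of the third paragraph to be the main obstacle: one must test only against isotropic (symplectic) subbundles rather than arbitrary $\Phi$-invariant ones, and verify that $L\oplus LK^{-1}$ is the only Lagrangian that can destabilise, so that no further conditions on $\beta_1$ and $\beta_3$ appear. By contrast, once Mumford's theorem is in hand, identifying the three shapes and matching their Stiefel-Whitney classes in cases (2) and (3) is routine bookkeeping with the double cover and the orthogonal splitting.
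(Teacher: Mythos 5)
Your proposal is correct and follows essentially the same route as the paper, which likewise passes to the Cayley partner $(W,q)=(VK^{-1/2},\gamma\otimes 1)$, invokes Mumford's classification of rank-two orthogonal bundles (the paper gives no further detail, deferring to \cite{mumford}, \cite{garcia-prada-mundet:2004}, \cite{gothen01} and \cite{bradlow-garciaprada-gothen2011}), and reads off the degree constraints from polystability. Your Lagrangian test with $L\oplus LK^{-1}$ is a valid way to fill in the degree bounds, and note that the worry in your last paragraph is unneeded: the proposition only asserts these constraints as \emph{necessary} conditions, so you never have to show that no further destabilising subbundles impose conditions on $\beta_1,\beta_3$.
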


\noi As in Section \ref{subsub:cayley}, the Stiefel-Whitney classes of the Cayley partner label separate components of $\mathcal{M}_{2g-2}(\Sp(4,\R)$. Moreover, in case (1) of Proposition \ref{prop:O(2)bundles}, i.e. when $w_1=0$, the degree of the line bundle $L$ constitutes an additional invariant\footnotemark\footnotetext{This can also be realized as an integral lift of the second Steifel-Whitney class $w_2\in H^2(X,\Z_2)$.}  We thus get a decomposition of $\mathcal{M}_{2g-2}(\Sp(4,\R)$ into:

\begin{equation}\label{comps1}
\mathcal{M}_{2g-2}(\Sp(4,\R)=(\bigcup_{w_1\ne 0,w_2}\mathcal{M}_{w_1,w_2})\cup
(\bigcup_{0 \leq l \le 2g-2}\mathcal{M}^0_l)
\end{equation}

\noi where $\mathcal{M}_{w_1,w_2}$ denotes the component in which the Higgs bundles are represented by $(V,\beta,\gamma)$ of type (2) with Stiefel-Whitney classes $w_1,w_2$, and $\mathcal{M}^0_l$ denotes the component in which the Higgs bundles are represented by $(V,\beta,\gamma)$ of type (1) in Proposition \ref{prop:O(2)bundles} with $\deg(L)=l$. Notice that when $l=3g-3$, the component $\beta_2$ in \eqref{bandg} must be a non-zero section of $L^{-2}K^3$, which means that 
\begin{equation}\label{eqn:Lchoices}
L^2=K^3
\end{equation}

\noi and $\beta_2$ can be taken to be the identity section of the trivial line bundle.  The component $\mathcal{M}^0_{3g-3}$ thus splits further into components determined by the $2^{2g}$ choices of $L$ satisfying \eqref{eqn:Lchoices}.  Denoting these by $K^{3/2}$, we get

\begin{equation}\label{comps2}
\mathcal{M}^0_{3g-3}=\bigcup_{K^{3/2}}\mathcal{M}_{K^{3/2}}
\end{equation}

\noi The components identified in \eqref{comps1} and \eqref{comps2} are not a priori connected, but we can use the Hitchin function (and Theorem \ref{th:proper}) to investigate their connectedness. The minima of the Hitchin function on each of these components can be identified as follows:

\begin{enumerate}
\item For $g-1<l\le 3g-3$, $(V,\beta,\gamma)$ represents a minimum of the Hitchin function on $\mathcal{M}^0_l$ if and only if it isomorphic to a Higgs bundle of the form in (1) of Proposition \ref{prop:O(2)bundles} with $\beta_1=\beta_2=0$,
\item On all other components, i.e. $\mathcal{M}^0_{g-1}$ or $\mathcal{M}_{w_1,w_2}$, $(V,\beta,\gamma)$ represents a minimum of the Hitchin function if and only if $\beta=0$.
\end{enumerate}

Notice that the minima on $\mathcal{M}_{K^{1/2}}$ are precisely the Hodge bundles identified in Proposition \ref{prop:smoothmin}, while the minima on the components $\mathcal{M}^0_{\nu}$ are additional Hodge bundles with $\beta\ne 0$.  These are the exceptional Hodge bundles referred to in Remark \ref{remark:n=2}.   The components $\mathcal{M}_{K^{1/2}}$ are the Hitchin components for $\Sp(4,\R)$ (see Section \ref{subsub:teich}) while the other components $\mathcal{M}^0_l$ have no analogs in $\mathcal{M}(\Sp(2n,\R)$ for $n\ne 2$.

The connectedness of the minimal submanifolds, and hence of the components, is proved in \cite{gothen01}.  It is interesting to note that in many cases, the minimal submanifolds can be shown to be connected by explicitly describing the entire loci: see \cite{bradlow-garciaprada-gothen2011} for descriptions of $\mathcal{M}^0_l$ for $g-1< l<2g-2$ and the Hitchin components $\mathcal{M}_{K^{3/2}}$.  We thus arrive at a final count of connected components:

\begin{theorem} The moduli space $\mathcal{M}_{2g-2}(\Sp(4,\R)$ has

\begin{displaymath}
2(2^{2g}-1)+ (2g-2) + 2^{2g} = 3\cdot 2^{2g} +2g - 4\ .
\end{displaymath}

connected components.
\end{theorem}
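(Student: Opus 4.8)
The plan is to turn the component count into a count of the pieces appearing in the decompositions \eqref{comps1} and \eqref{comps2}, and then to prove separately that each such piece is connected. First I would observe that all of the invariants used to define these pieces---the Stiefel-Whitney classes $(w_1,w_2)$ of the Cayley partner, the degree $l=\deg(L)$ in the $w_1=0$ case, and, when $l=3g-3$, the square root $K^{3/2}$ of \eqref{eqn:Lchoices}---are locally constant on $\mathcal{M}_{2g-2}(\Sp(4,\R))$. Hence each piece is open and closed, so it is a union of connected components, and the problem is to see that each is a single component. Reading off the invariants (all of which are realised) gives $2(2^{2g}-1)$ pieces $\mathcal{M}_{w_1,w_2}$ (the $2^{2g}-1$ nonzero classes $w_1\in H^1(X,\Z_2)$ paired with the two values of $w_2\in H^2(X,\Z_2)$), the $2g-2$ pieces $\mathcal{M}^0_l$ with $g-1\le l\le 3g-4$, and the $2^{2g}$ pieces $\mathcal{M}_{K^{3/2}}$ of \eqref{comps2}; their number is $2(2^{2g}-1)+(2g-2)+2^{2g}=3\cdot 2^{2g}+2g-4$, which is the asserted count once connectedness is known.

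The connectedness of a single piece $P$ I would deduce from properness as follows. Because a complex analytic variety is locally connected, the connected components of $P$ are open; by Theorem~\ref{th:proper} the function $f$ is proper, and it is bounded below (being $\|\beta\|^2+\|\gamma\|^2$), so $f$ attains a local minimum on each such component. Consequently the locus of local minima of $f$ in $P$ meets every connected component of $P$, and therefore $P$ is connected \emph{provided} this locus of local minima is itself connected. This reduces the theorem to identifying the local minima inside each piece and proving their locus connected. The identification proceeds in two steps: the minima lying in the smooth locus are pinned down by the index criterion of Section~\ref{subs:min} (giving $\beta=0$ on $\mathcal{M}_{w_1,w_2}$ and on $\mathcal{M}^0_{g-1}$, and $\beta_1=\beta_2=0$ on $\mathcal{M}^0_l$ for $g-1<l\le 3g-3$), while the explicit construction of paths along which $f$ strictly decreases shows that no further local minima are hidden at the singular, strictly polystable points.

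The hard part will be proving that each of these minimal loci is connected, which is the content supplied by \cite{gothen01}; I would organise it by the three types of piece. On $\mathcal{M}_{K^{3/2}}$ the minimum is the single Hodge bundle of Proposition~\ref{prop:smoothmin}, so connectedness is automatic---indeed these Hitchin components are shown to be cells in \cite{bradlow-garciaprada-gothen2011}. On the pieces with $w_1\ne 0$ the minima have $\beta=0$, so $\gamma$ alone carries the $\OO(2,\C)$-structure and the minimal locus is a moduli space of orthogonal bundles of fixed topological type; using Mumford's description (Proposition~\ref{prop:O(2)bundles}(2)) that such a bundle is a pushforward from the double cover determined by $w_1$, this locus is a connected family of line bundles on $\tilde X$ modulo the covering involution. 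On the remaining pieces $\mathcal{M}^0_l$ the minimal locus is a moduli space of quadratic-pair data on a rank-$2$ bundle, whose connectedness must be read off from a direct study of that moduli problem. Establishing connectedness uniformly across all $l$ and all admissible $(w_1,w_2)$---and checking that every piece is nonempty, so that the count is exact---is the delicate step; granting it, the three contributions assemble to the stated total $3\cdot 2^{2g}+2g-4$.
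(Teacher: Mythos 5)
Your proposal is correct and follows essentially the same route as the paper: decompose $\mathcal{M}_{2g-2}(\Sp(4,\R))$ by the locally constant invariants $(w_1,w_2)$, $l=\deg(L)$, and the choice of $K^{3/2}$ as in \eqref{comps1} and \eqref{comps2}, reduce connectedness of each piece to connectedness of its locus of minima via properness of the Hitchin function (Theorem \ref{th:proper}), identify the minima by the smooth-locus criterion together with the by-hand analysis at singular points, and invoke \cite{gothen01} (with \cite{bradlow-garciaprada-gothen2011} for explicit descriptions) for connectedness of the minimal loci. Your explicit bookkeeping of the piece count and the remark that nonemptiness of every piece must be checked are refinements of presentation, not a different argument.
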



\begin{thebibliography}{1}

\bibitem{arroyo} M. Arroyo, The Geometry of $\SO(p, q)$-Higgs Bundles, Ph.D. Thesis, (2009) Salamanca

\bibitem{AtiyahBott81}
M.~F. Atiyah and R.~Bott.
\newblock Yang-{M}ills and bundles over algebraic curves.
\newblock {\em Proc. Indian Acad. Sci. Math. Sci.}, 90(1):11--20, (1981).

\bibitem{AtiyahBott83}
M.~F. Atiyah and R.~Bott.
\newblock The {Y}ang-{M}ills equations over {R}iemann surfaces.
\newblock {\em Philos. Trans. Roy. Soc. London Ser. A}, 308(1505):523--615,1(1983).

\bibitem{AtiyahHitchinSinger78}
M.~F. Atiyah, N.~J. Hitchin, and I.~M. Singer.
\newblock Self-duality in four-dimensional {R}iemannian geometry.
\newblock {\em Proc. Roy. Soc. London Ser. A}, 362(1711):425--461, (1978).

\bibitem{BiswasWilkin10}
I. Biswas and G. Wilkin.
\newblock Morse theory for the space of {H}iggs {$G$}-bundles.
\newblock {\em Geom. Dedicata}, 149:189--203, (2010).

\bibitem{Bott54}
R. Bott.
\newblock Nondegenerate critical manifolds.
\newblock {\em Ann. of Math. (2)}, 60:248--261, (1954).


\bibitem{BourguignonLawson81}
Jean-Pierre Bourguignon and H.~Blaine Lawson, Jr.
\newblock Stability and isolation phenomena for {Y}ang-{M}ills fields.
\newblock {\em Comm. Math. Phys.}, 79(2):189--230, (1981).

\bibitem{Bradlow90}S. B. Bradlow, Vortices in holomorphic line bundles over closed K\"ahler manifolds, {\em Comm. Math. Phys.} 135(1):1--17, (1990).

\bibitem{bradlow91}S. B. Bradlow, Special metrics and stability for holomorphic bundles with
global sections, {\em J. Differential Geom.} 33 (1991), 169Ð213.

\bibitem{bradlow-garciaprada-gothen2003} S. Bradlow, O. Garcia-Prada, and P. Gothen, Surface group representations and $\U(p, q)$-Higgs bundles, {\em J. Diff. Geom.} 64 (2003), 111--170

\bibitem{bradlow-garciaprada-gothen2004} S. Bradlow, O. Garcia-Prada, and P. Gothen, Representations of surface groups in the general linear group, {\em Publications of the RSME,} vol. 7 (2004), 89--94.

\bibitem{bradlow-garciaprada-gothen2006} S. Bradlow, O. Garcia-Prada, and P. Gothen, Maximal surface group representations in isometry groups of
classical Hermitian symmetric spaces, {\em Geometriae Dedicata}122,
(2006), 185--213.

\bibitem{bradlow-garciaprada-gothen2011} S. Bradlow, O. Garcia-Prada, and P. Gothen, Deformations of maximal representations in $\Sp94,\R)$, {\em Quarterly J. Math.} (2011), to appear.


\bibitem{bradlow-garcia-prada-mundet:2003}
S.~B. Bradlow, O.~Garc{\'{\i}}a-Prada, and I.~Mundet~i Riera, Relative
{H}itchin-{K}obayashi correspondences for principal pairs,
{\em Quarterly J. Math.}   \textbf{54} (2003), 171--208.

\bibitem{BILW05} M. Burger, A. Iozzi, F. Labourie and A. Wienhard, Maximal representations of surface groups: Symplectic Anosov structures {\em Pure and Applied Mathematics Quaterly}, 1 (2005), no. 2, (Special Issue: In Memory of Armand Borel), 555--601.

\bibitem{BIW10} M. Burger, A. Iozzi, and A. Wienhard,  Surface group representations with maximal Toledo invariant, {Ann. of Math.} (2)172 (2010), no. 1, 517--566.
\bibitem{BIW11} M. Burger, A. Iozzi, and A. Wienhard, Higher TeichmŸller spaces: From SL(2,R) to other Lie groups, {\em Handbook of TeichmŸller Theory vol. IV} (2011).

\bibitem{corlette:1988} Kevin Corlette\newblock Flat ${G}$-bundles with canonical metrics\newblock
 {\em J. Differential Geom.}, 28 (1988), 361--382


\bibitem{Daskal92}
G.~D. Daskalopoulos.
\newblock The topology of the space of stable bundles on a compact {R}iemann
  surface.
\newblock {\em J. Differential Geom.}, 36(3):699--746, 1992.

\bibitem{DWWW11}
G.~D.~Daskalopoulos, R.~A.~Wentworth, J. Weitsman, and G.~Wilkin,
 Morse theory and hyperk\"ahler Kirwan surjectivity for Higgs bundles. 
\newblock {\em J. Differential Geom.}, 87(1):81--116, 2011.

\bibitem{desaleRamanan}
U. V. Desale and S. Ramanan, Poincar«e polynomials of the variety of
stable bundles, {\em Math. Ann. 216} (1975), 233--244.


\bibitem{Donaldson83}
S.~K. Donaldson.
\newblock A new proof of a theorem of {N}arasimhan and {S}eshadri.
\newblock {\em J. Differential Geom.}, 18(2):269--277, 1983.

\bibitem{Donaldson85}
S.~K. Donaldson.
\newblock Anti self-dual {Y}ang-{M}ills connections over complex algebraic
  surfaces and stable vector bundles.
\newblock {\em Proc. London Math. Soc. (3)}, 50(1):1--26, 1985.

\bibitem{Donaldson87}
S.~K. Donaldson.
\newblock Twisted harmonic maps and the self-duality equations.
\newblock {\em Proc. London Math. Soc. (3)}, 55(1):127--131, 1987.

\bibitem{frankel:1959}
T.~Frankel, Fixed points and torsion on {K}\"{a}hler manifolds,
{\em Ann. of
Math.} (2) \textbf{70} (1959), 1--8.



\bibitem{garcia-gothen-mundet:2008} 
O.~Garc{\'\i}a-Prada, P.~B. Gothen, and I.~Mundet i~Riera,
\emph{Representations of surface groups in the real symplectic
group}, preprint (2009), \texttt{arXiv:0809.0576v3 [math.AG]}.

\bibitem{garcia-prada-gothen-mundet:2009a}
O.~Garc{\'\i}a-Prada, P.~B. Gothen, and I.~Mundet i~Riera,
\emph{The Hitchin--Kobayashi correspondence, Higgs pairs and surface
group representations}, preprint (2009),
\texttt{arXiv:0909.4487v2 [math.AG]}.

\bibitem{garcia-prada-mundet:2004}
O.~Garc{\'\i}a-Prada and I.~Mundet i~Riera, \emph{Representations of the
fundamental group of a closed oriented surface in
$\mathrm{Sp}(4,\mathbb{R})$}, Topology \textbf{43} (2004), 831--855.

\bibitem{garcia-prada-oliveira}O. Garcia-Prada and A. Oliveira, Higgs bundles for the non-compact dual of the unitary group, {\em Illinois Journal of Mathematics}, in press

\bibitem{Goldman88} W. Goldman, Topological components of spaces of representations,
 {\em Invent. Math.}, 93, (1988), 557--607

\bibitem{gothen01}P. B. Gothen, Components of spaces of representations and stable triples, {\em Topology}
40 (2001), 823--850.

\bibitem{gothen94}
P.Gothen,  The Betti Numbers of the Moduli Space of Rank 3 Higgs Bundles, 
\newblock{\em International Journal of Mathematics 5} (1994), 861--875.

\bibitem{gothen-oliveira} P. Gothen and  A. Oliveira, 
Rank two quadratic pairs and surface group representations, (2011) {\em arXiv:1106.1766}

\bibitem{Hitchin87}
N.~J. Hitchin.
\newblock The self-duality equations on a {R}iemann surface.
\newblock {\em Proc. London Math. Soc. (3)}, 55(1):59--126, 1987.

\bibitem{Hitchin92}
N.~J. Hitchin.
\newblock {L}ie groups and {T}eichm\"{u}ller space
\newblock  {\em Topology} \textbf{31} (1992), 449--473.

\bibitem{Hong01}
Min-Chun Hong.
\newblock Heat flow for the {Y}ang-{M}ills-{H}iggs field and the {H}ermitian
  {Y}ang-{M}ills-{H}iggs metric.
\newblock {\em Ann. Global Anal. Geom.}, 20(1):23--46, 2001.

\bibitem{Kirwan84}
F. C.  Kirwan.
\newblock {\em Cohomology of quotients in symplectic and algebraic geometry},
  volume~31 of {\em Mathematical Notes}.
\newblock Princeton University Press, Princeton, NJ, 1984.

\bibitem{knapp:1996}A. W. Knapp, \emph{Lie Groups beyond an Introduction}, first ed.,
Progress in Mathematics, vol 140, Birkh\"auser Boston Inc., Boston,
MA, 1996.

\bibitem{kobayashi:1987}
S. Kobayashi.
\newblock {\em Differential geometry of complex vector bundles}, volume~15 of
  {\em Publications of the Mathematical Society of Japan}.
\newblock Princeton University Press, Princeton, NJ, 1987.
\newblock , Kano Memorial Lectures, 5.

\bibitem{Lubke}M. Lubke, Stability of Einstein-Hermitian vector bundles., {\em Manuscripta
math.} 42, (1983), 245--257.
\bibitem{LT}M. Lubke and A. Teleman,  The universal Kobayashi-Hitchin correspondence on Hermitian manifolds. {\em Mem. Amer. Math. Soc.}, 183 (2006), no. 863

\bibitem{Milnor63}
J.~Milnor.
\newblock {\em Morse theory}.
\newblock Based on lecture notes by M. Spivak and R. Wells. Annals of
  Mathematics Studies, No. 51. Princeton University Press, Princeton, N.J. (1963).

\bibitem{mumford}
D.~Mumford, \emph{Theta characteristics of an algebraic curve}, Ann. Sci.
{\'E}cole Norm. Sup. (4) \textbf{4} (1971), 181--192.


\bibitem{NarasimhanSeshadri65}
M.~S. Narasimhan and C.~S. Seshadri.
\newblock Stable and unitary vector bundles on a compact {R}iemann surface.
\newblock {\em Ann. of Math. (2)}, 82:540--567, (1965).

\bibitem{NewlanderNirenberg57}
A.~Newlander and L.~Nirenberg.
\newblock Complex analytic coordinates in almost complex manifolds.
\newblock {\em Ann. of Math. (2)}, 65:391--404, (1957).

\bibitem{Palais63}
R.~S. Palais.
\newblock Morse theory on {H}ilbert manifolds.
\newblock {\em Topology}, 2:299--340, (1963).

\bibitem{Rade92}
Johan R{\aa}de.
\newblock On the {Y}ang-{M}ills heat equation in two and three dimensions.
\newblock {\em J. Reine Angew. Math.}, 431:123--163, (1992).

\bibitem{ramanathan:1975}
A.~Ramanathan, \emph{Stable principal bundles on a compact {R}iemann
surface},
Math. Ann. \textbf{213} (1975), 129--152.

\bibitem{ramanathan:1996} \bysame, \emph{Moduli for principal
bundles over algebraic curves: I and II}, Proc. Indian
Acad. Sci. Math. Sci.  \textbf{106} (1996), 301--328 and 421--449.

\bibitem{schmitt:2008}
A.~Schmitt, \emph{Geometric invariant theory and decorated principal
bundles},
Zurich Lectures in Advanced Mathematics, European Mathematical Society, (2008).

\bibitem{Sedlacek82}
S. Sedlacek.
\newblock A direct method for minimizing the {Y}ang-{M}ills functional over
  {$4$}-manifolds.
\newblock {\em Comm. Math. Phys.}, 86(4):515--527, (1982).

\bibitem{Shatz77}
S.~S. Shatz.
\newblock The decomposition and specialization of algebraic families of vector
  bundles.
\newblock {\em Compositio Math.}, 35(2):163--187, (1977).


\bibitem{Simpson88}
C.~T. Simpson.
\newblock Constructing variations of {H}odge structure using {Y}ang-{M}ills
  theory and applications to uniformization.
\newblock {\em J. Amer. Math. Soc.}, 1(4):867--918, (1988).

\bibitem{Simpson92}
C.~T. Simpson.
\newblock Higgs bundles and local systems.
\newblock {\em Inst. Hautes \'Etudes Sci. Publ. Math.}, (75):5--95, (1992).

\bibitem{simpson:1994}
\bysame, \emph{Moduli of representations of the fundamental group of a smooth
projective variety {I}}, I.H.E.S. Publ. Math. \textbf{79}
(1994), 47--129.

\bibitem{simpson:1995}
\bysame, \emph{Moduli of representations of the fundamental group of a smooth
projective variety {II}},  I.H.E.S. Publ. Math. \textbf{80}
(1995), 5--79.

\bibitem{Taubes84}
C. H. Taubes.
\newblock Path-connected {Y}ang-{M}ills moduli spaces.
\newblock {\em J. Differential Geom.}, 19(2):337--392, (1984).

\bibitem{Thaddeus94}
M. Thaddeus.
\newblock Stable pairs, linear systems and the {V}erlinde formula.
\newblock  Invent. Math., 117 (1994), no. 2, 317--353.

\bibitem{Uhlenbeck82}
K.~K. Uhlenbeck.
\newblock Connections with {$L\sp{p}$} bounds on curvature.
\newblock {\em Comm. Math. Phys.}, 83(1):31--42, (1982).

\bibitem{UhlenbeckYau86}
K.~Uhlenbeck and S.-T. Yau.
\newblock On the existence of {H}ermitian-{Y}ang-{M}ills connections in stable
  vector bundles.
\newblock {\em Comm. Pure Appl. Math.}, 39(S, suppl.):S257--S293, (1986).


\bibitem{wentworthwilkin-pairs}
R.~A. Wentworth and G. Wilkin.
\newblock Morse theory and stable pairs.
\newblock To appear in \emph{Variational problems in differential geometry}, Cambridge University Press.

\bibitem{wentworthwilkin-u21}
R.~A. Wentworth and G. Wilkin.
\newblock Cohomology of $U(2,1)$ representation varieties of surface groups.
\newblock {\em arXiv:1109.0197}.

\bibitem{Wilkin08}
G. Wilkin.
\newblock Morse theory for the space of {H}iggs bundles.
\newblock {\em Comm. Anal.Geom.}, 16(2):283--332, (2008).


\end{thebibliography}

\end{document}